\DeclareMathOperator{\Aut}{Aut}
\DeclareMathOperator{\St}{St}
\DeclareMathOperator{\ev}{ev}
\DeclareMathOperator{\Hom}{Hom}
\DeclareMathOperator{\vir}{vir}
\newtheorem{theorem}{Theorem}[section]
\newtheorem{corollary}[theorem]{Corollary}
\newtheorem{lemma}[theorem]{Lemma}
\newtheorem{proposition}[theorem]{Proposition}
\newtheorem{conjecture}[theorem]{Conjecture}
\begin{document}

\title{Hodge integrals and $\lambda_{g}$ conjecture with target varieties}
\author{Xin Wang}
\address{Department of Mathematics\\ Shandong University \\ Jinan, China}

\email{xinwmath@gmail.com}
\begin{abstract}
In this paper, we propose $\lambda_{g}$ conjecture for Hodge integrals with target varieties. Then we establish relations between Virasoro conjecture and $\lambda_{g}$ conjecture, in particular, we prove $\lambda_{g}$ conjecture in all genus  for smooth projective varieties with semisimple quantum cohomology or smooth algebraic curves. Meanwhile, we also prove $\lambda_{g}$ conjecture in genus zero for any smooth projective varieties. In the end, together with DR formula for $\lambda_g$ class, we obtain a new type of universal constraints for descendant Gromov-Witten invariants. As an application, we prove  $\lambda_{g}$ conjecture in genus one for any smooth projective varieties. 
\end{abstract}

\subjclass[2020]{Primary 	53D45; Secondary 14N35}
\keywords{Gromov--Witten invariants, Hodge integrals, $\lambda_g$ conjecture}
\maketitle
\tableofcontents

\allowdisplaybreaks

\section{Introduction}\label{sec:intro}
\subsection{$\lambda_{g}$ conjecture/theorem}
Let $\overline{\mathcal{M}}_{g,n}$ be the moduli space of stable curves of genus-$g$ with $n$-marked points and $\psi_i$ be the first Chern class of the cotangent space over $\overline{\mathcal{M}}_{g,n}$ at the $i$-th
 marked point. The
Hodge bundle $\mathbb{E}$ over $\overline{\mathcal{M}}_{g,n}$ is the rank $g$ vector bundle with fiber $H^0(C,\omega_C)$ over the domain curve $(C;x_1,...,x_n)$, where $\omega_C$ is the dualizing sheaf of curve $C$. Let $\lambda_{k}$ be
the $k$-th Chern class of $\mathbb{E}$.
The famous Witten’s conjecture predicts 
 the generating function of the $\psi$ class integrals over $\overline{\mathcal{M}}_{g,n}$ satisfies the KdV hierarchy (\cite{witten1990two}). Together with string equation, the KdV hierarchy determine all  $\psi$ class integrals  in all genera. Witten's conjecture has been proven firstly by
 Kontsevich \cite{kontsevich1992intersection}. Besides Witten's conjecture, Getzler and Pandharipande proposed $\lambda_{g}$ conjecture as a consequence of  Virasoro conjecture for $\mathbb{P}^1$, which predicts a simple form of $\psi$ class integrals coupled with $\lambda_{g}$ over $\overline{\mathcal{M}}_{g,n}$ (cf.\cite{getzler1998virasoro}). Then $\lambda_{g}$ conjecture was proved by Faber and Pandharipande via Gromov-Witten theory of $\mathbb{P}^1$ (cf. \cite{faber2003hodge}). Later on, Goulden,  Jackson and  Vakil reproved $\lambda_g$  conjecture using ELSV formula (cf.\cite{goulden2009short}). It is well known that $\lambda_{g}$ conjecture/theorem plays an important role in computing Hodge intergrals and Gromov-Witten invariants.
 
  It is well known that Witten's conjecture for the $\psi$ integrals over $\overline{\mathcal{M}}_{g,n}$ was generalized
 to Virasoro conjecture, which gives universal constraints for tautological integrals over the moduli space of stable maps to
 arbitrary smooth projective varieties. Virasoro conjecture was proposed  by  Eguchi, Hori,
 and Xiong \cite{eguchi1998quantum} and modified by Katz \cite{cox1999mirror}.  Despite many progresses have been made in the literature (cf.  \cite{liu1998virasoro},  \cite{dubrovin1999frobenius}, \cite{getzler1999virasoro}, \cite{liu2001elliptic}, \cite{givental2001semisimple}, \cite{okounkov2006virasoro},  \cite{teleman2012structure}), it is still widely open for general target varieties.  Parallelly, it is very natural to wonder whether $\lambda_{g}$ conjecture/theorem can be generalized to constrain tautological integrals coupled with $\lambda_{g}$ class over the moduli space of stable maps to
 arbitrary smooth projective varieties? If so, what are these constraints and their relationship with Virasoro conjecture?
 In this paper, we will construct these universal constraints explicitly and study their relationship with Virasoro conjecture. 
\subsection{The $\lambda_{g}$ conjecture with target varieties}
Let $X$ be a smooth projective variety  of complex dimension $d$ and $\{\phi_\alpha: \alpha=1,\dots,N\}$ be a basis of its cohomology ring $H^*(X;\mathbb{C})$ with $\phi_1=\mathbf{1}$ the identity and   $\phi_\alpha\in H^{p_\alpha,q_\alpha}(X;\mathbb{C})$
for every $\alpha$.
Recall that the \emph{big phase space} for Gromov--Witten invariants of $X$ is defined to be $\prod_{n=0}^{\infty}H^*(X;\mathbb{C})$ with standard
basis $\{\tau_{n}(\phi_\alpha): \alpha=1,\dots,N, n\geq0\}$.
Denote the coordinates on the big phase space
with respect to the standard basis by  $\{t_n^\alpha\}$.
Let $F_g$ be the genus-$g$ generating function, which is a formal power
series of $\mathbf{t}=(t_n^\alpha)$  with coefficients being the  genus-$g$ Gromov-Witten invariants.
Denote $\langle\langle\tau_{n_1}(\phi_{\alpha_1}),\dots,\tau_{n_k}(\phi_{\alpha_k})\rangle\rangle_g$ for the derivatives
of $F_g$ with respect to the variables $t_{n_1}^{\alpha_1},\dots,t_{n_k}^{\alpha_k}$.
Similarly, we use $\langle\langle\tau_{n_1}(\phi_{\alpha_1}),\dots,\tau_{n_k}(\phi_{\alpha_k});\lambda_{g}\rangle\rangle_g$ to denote  the correlation function of Hodge integrals with $\lambda_{g}$ class.
For convenience, we identify $\tau_n(\phi_\alpha)$ with
the coordinate vector field $\frac{\partial}{\partial t_n^\alpha}$
on $\prod_{n=0}^{\infty}H^*(X;\mathbb{C})$ for $n\geq0$. If $n<0$, $\tau_n(\phi_\alpha)$ is understood to be the $0$
vector field. We also abbreviate $\tau_0(\phi_\alpha)$ by $\phi_\alpha$. Let $\phi^\alpha=\sum_{\beta}\eta^{\alpha\beta}\phi_\beta$  with $(\eta^{\alpha\beta})$
 representing the inverse matrix of the Poincar\'e intersection pairing on $H^*(X;\mathbb{C})$. 


Motivated by the famous Virasoro conjecture for Gromov-Witten invariants, we propose the following generalized $\lambda_{g}$ conjecture with target varieties. Denote $b_\alpha=p_\alpha-\frac{1}{2}(d-1)$, $b^\alpha=1-b_\alpha$ and $\Tilde{t}_r^\alpha=t_r^\alpha-\delta_{r}^{1}\delta_{\alpha}^1$, then for any $g,m\geq0, n\geq-1$ and $\beta$, we
define \begin{align*}
&\Theta_{g,n,m,\beta}
\nonumber\\= 
&\sum_{j=0}^{n+1}\sum_{r,\alpha}e_{n+1-j}(r+b_{\alpha}-\frac{1}{2},...,r+n+b_{\alpha}-\frac{1}{2})\tilde{t}_r^{\alpha} 
\langle\langle\tau_{r+n-j}(c_1(X)^{j}\cup\phi_\alpha)\tau_{m}(\phi_\beta);\lambda_{g}\rangle\rangle_{g}
\nonumber\\&+
\sum_{j=0}^{n+1}e_{n+1-j}(m+b_{\beta}+\frac{1}{2},...,m+n+b_{\beta}+\frac{1}{2})\langle\langle \tau_{m+n-j}(c_1(X)^{j}\cup\phi_{\beta});\lambda_{g}\rangle\rangle_{g}
\nonumber\\&+\sum_{j=0}^{n+1}\sum_{s,\alpha}(-1)^{-s-1}e_{n+1-j}(-s+b^{\alpha}-\frac{3}{2},...,-s+n+b^{\alpha}-\frac{3}{2})
\nonumber\\&\quad\quad\cdot\sum_{g_1+g_2=g}\langle\langle\tau_{m}(\phi_\beta)\tau_{-s-1+n-j}(c_1(X)^j\cup\phi^\alpha);\lambda_{g_1}\rangle\rangle_{g_1}
\langle\langle\tau_{s}(\phi_\alpha);\lambda_{g_2}\rangle\rangle_{g_2}
\nonumber\\&+\delta_{g}^0\delta_{m}^{0}\sum_{\alpha}t_0^{\alpha}\int_{X}c_1(X)^{n+1}\cup\phi_\alpha\cup\phi_\beta.
\end{align*}
\begin{conjecture}[$\lambda_{g}$ conjecture]\label{conj:lambda-g}
For any $m$ and 
$\beta$, the genus-$g$ Hodge integrals of $X$  with $\lambda_{g}$ insertion satisfy the following
 identity \begin{align*}
\Theta_{g,n,m,\beta}=0
\end{align*}
for any $n\geq-1$. 
    
\end{conjecture}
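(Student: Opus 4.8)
The shape of $\Theta_{g,n,m,\beta}$ is the key to the whole problem: it is precisely the genus-$g$ Virasoro operator $L_n$ (in the normalization where $b_\alpha=p_\alpha-\tfrac12(d-1)$, so that the arguments of the $e_{n+1-j}$ are the usual Virasoro weights) applied to the two-point correlators, but with a $\lambda_g$ class inserted into every bracket. Two features make this identification unambiguous: there is no genus-$(g-1)$ self-gluing term, and the quadratic term is restricted to separating splittings $g_1+g_2=g$ with the factorized Hodge insertions $\lambda_{g_1}\boxtimes\lambda_{g_2}$. Since $\lambda_0=1$, the genus-$0$ case $\Theta_{0,n,m,\beta}=0$ is literally the genus-$0$ Virasoro constraint, which is known. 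My plan is therefore to prove the conjecture by \emph{deriving} the $\lambda_g$-decorated constraints from the ordinary Virasoro constraints, so that $\Theta_{g,n,m,\beta}=0$ holds in exactly those targets and genera where Virasoro is available: unconditionally in genus $0$, for targets with semisimple quantum cohomology via Givental--Teleman, for curves, and, with an extra input, in genus $1$.

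The reduction rests on the boundary behaviour of the top Hodge class. Under the non-separating gluing map $\xi_{\mathrm{irr}}\colon\overline{\mathcal M}_{g-1,n+2}\to\overline{\mathcal M}_{g,n}$ the Hodge bundle acquires a trivial quotient line bundle, so $\xi_{\mathrm{irr}}^*\lambda_g=0$; under the separating gluing map the Hodge bundle splits as the external direct sum of the two factor Hodge bundles, so $\lambda_g$ pulls back to $\lambda_{g_1}\boxtimes\lambda_{g_2}$ with $g_1+g_2=g$. These two facts are exactly what account for (i) the absence of a genus-$(g-1)$ term and (ii) the product structure $\langle\langle\cdots;\lambda_{g_1}\rangle\rangle_{g_1}\langle\langle\cdots;\lambda_{g_2}\rangle\rangle_{g_2}$ in $\Theta$. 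I would also record the $\lambda_g$-versions of the string, dilaton and divisor equations; since $\lambda_g$ is pulled back from $\overline{\mathcal M}_{g,n}$ and is insensitive to the forgetful morphisms up to the standard comparison of $\psi$-classes, these are routine, and iterating them produces both the dilaton shift $\tilde t_r^\alpha$ and the elementary-symmetric coefficients $e_{n+1-j}$.

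With these inputs the derivation is, schematically, to run the usual proof of the Virasoro constraints with a $\lambda_g$ factor carried along: the node-gluing (quadratic) part of $L_n$ collapses to the separating sum by the two pullback formulas above, while the $\psi$-class and Euler-field part is transformed by the $\lambda_g$-string/dilaton/divisor equations, yielding exactly $\Theta_{g,n,m,\beta}$. Concretely, for semisimple targets I would work inside Givental's quantization, where the Hodge insertion is realized as the Mumford/$\kappa$-twist of the $R$-matrix action and the Virasoro symmetry is Teleman's theorem; for curves I would feed in the known Virasoro constraints; and in genus $1$, where a clean geometric relation is not available, I would use the DR (double ramification) formula to rewrite $\lambda_1$ in terms of descendant and boundary classes and then verify the resulting descendant identity directly.

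The main obstacle is that Virasoro is a priori a statement about generating functions rather than a single tautological relation on $\overline{\mathcal M}_{g,n}(X,\beta)$, so ``inserting $\lambda_g$'' is not automatically legitimate: I must exhibit the ordinary and the $\lambda_g$-decorated constraints as two specializations of one underlying relation, which is transparent in genus $0$ and in the Givental framework but genuinely delicate in general higher genus. The heart of the difficulty is to show that the non-separating contribution vanishes at the level of the full operator identity, not merely termwise, and that the $\lambda_{g_1}\boxtimes\lambda_{g_2}$ splitting is compatible with the quantum-product coefficients appearing in $L_n$. This is also why an unconditional statement beyond genus $0$ seems to require either semisimplicity, a curve target, or, for genus $1$, the DR formula as a substitute for the missing geometric relation.
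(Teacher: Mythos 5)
Your proposal assembles the right circle of facts around the edges (the boundary restriction properties of $\lambda_g$, the conditional scope matching the availability of Virasoro, a DR-formula route in genus one), but it is missing the one mechanism the paper actually runs on, and the step you propose in its place does not exist as a proof technique. Your plan is to ``run the usual proof of the Virasoro constraints with a $\lambda_g$ factor carried along,'' but in higher genus there is no such proof to decorate: Virasoro is a conjecture in general, for semisimple targets it is Teleman's classification theorem, and for curves it is Okounkov--Pandharipande; none of these is a termwise manipulation of correlators into which one can legitimately insert $\lambda_g$. You concede exactly this point (``genuinely delicate'') and never resolve it. The restriction properties $\iota_{g_1,g_2}^*\lambda_g=\lambda_{g_1}\boxtimes\lambda_{g_2}$ and $\iota_{g-1}^*\lambda_g=0$ explain why the \emph{shape} of $\Theta_{g,n,m,\beta}$ is plausible, but they are not a derivation. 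A symptom that your structural identification is off: the arguments of the elementary symmetric functions in $\Theta$ are $r+b_\alpha-\tfrac12$, $m+b_\beta+\tfrac12$ and $-s+b^\alpha-\tfrac32$, which are \emph{not} the Virasoro weights of $X$; consequently your genus-zero claim that $\Theta_{0,n,m,\beta}=0$ ``is literally the genus-$0$ Virasoro constraint'' for $X$ is false as stated.

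The missing idea is the product space $Y=X\times\mathbb{P}^1$ and its fiber-degree-zero Gromov--Witten theory; the half-integer shifts above are precisely the weights of $Y$, since $b^Y_{(\alpha,0)}=b_\alpha-\tfrac12$ and $b^Y_{(\alpha,1)}=b_\alpha+\tfrac12$. The paper proves (Proposition~\ref{prop:deg-0-vir-fun-class}, equation~\eqref{eqn:fiber-0-Y-vir=lambda}) that
\begin{align*}
[\overline{\mathcal{M}}_{g,n}(Y,(B,0))]_{vir}=\left([\overline{\mathcal{M}}_{g,n}(X,B)]_{vir}\times[\mathbb{P}^1]\right)\cap\left((-1)^g\lambda_{g}\otimes1+(-1)^{g-1}\lambda_{g-1}\otimes(2H)\right),
\end{align*}
so the fiber-degree-zero descendant potential $\widetilde{\mathcal{D}}$ of $Y$ packages exactly the $\lambda_g$- and $\lambda_{g-1}$-Hodge integrals of $X$, and Theorem~\ref{thm:D-1Ln-D-formula} computes
$\widetilde{\mathcal{D}}^{-1}L_n\widetilde{\mathcal{D}}=\sum_{g}\hbar^{2g-2}(-1)^g\bigl((-2)\Psi_{g,n}+\sum_{m,\beta}u_m^\beta\,\Theta_{g,n,m,\beta}\bigr)$, where $L_n$ is the Virasoro operator of $Y$ and the distinguished insertion $\tau_m(\phi_\beta)$ in $\Theta$ is the shadow of $\tau_m(\phi_\beta\otimes H)$. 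Extracting the coefficient of $u_m^\beta$ exhibits the conjecture as a \emph{component} of the ordinary Virasoro conjecture for $Y$ --- this is exactly what makes ``inserting $\lambda_g$'' legitimate, the problem you flagged but did not solve. The conditional cases then follow at once: Virasoro for $Y$ holds when $X$ is semisimple or a curve by Coates--Givental--Tseng (Virasoro for toric bundles), and in genus zero for every $X$ by Liu--Tian applied to $Y$, not to $X$. Your genus-one plan via Pixton's formula does match the paper's Section~\ref{sec:new-universal-gw-pixconj}, but note that the paper's genus-one argument consumes the genus-zero case $\Theta_{0,n,m,\beta}=0$ (Theorem~\ref{thm:lambda-g-conj-g=0}) together with genus-zero TRR, so it cannot stand on its own before genus zero is secured by the product trick.
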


Despite Virasoro conjecture is widely open in general, many special cases have been proved such as projective varieties with semisimple quantum cohomology and smooth algebraic curves (cf. \cite{givental2001semisimple}, \cite{teleman2012structure}, \cite{okounkov2006virasoro}). For $\lambda_{g}$ conjecture, we can prove 
\begin{theorem}\label{thm:virasoro-lambda-g-conj-rela} Assume Virasoro conjecture for Gromov-Witten invariants of $X$ holds, such as smooth projective varieties with semisimple quantum cohomology or smooth algebraic curves, then Conjecture~\ref{conj:lambda-g} holds.
\end{theorem}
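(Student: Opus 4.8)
The plan is to deduce the identity $\Theta_{g,n,m,\beta}=0$ from the Virasoro constraints $L_n\mathcal{Z}=0$, $n\ge-1$, where $\mathcal{Z}=\exp\big(\sum_{g\ge0}\hbar^{g-1}F_g\big)$ in the standard genus normalization and $L_n$ are the Eguchi--Hori--Xiong operators in Katz's form, by transporting them through the relation between the $\lambda_g$-Hodge potential and the descendant potential. First I would record the precise shape of $L_n$: its linear part carries exactly the coefficients $e_{n+1-j}(r+b_\alpha-\tfrac12,\dots)$ together with the operators $c_1(X)^j\cup(\,\cdot\,)$ and the dilaton shift $\tilde t_r^\alpha$, its quadratic part is $\tfrac{\hbar}{2}\sum\partial^2$, and it carries the genus-zero term built from $\int_X c_1(X)^{n+1}\cup\phi_\alpha\cup\phi_\beta$. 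Differentiating $L_n\mathcal{Z}=0$ once in $t_m^\beta$ and passing to correlator form already reproduces the structure of the \emph{first, second and fourth} lines of $\Theta_{g,n,m,\beta}$, with the caveat that the correlators there carry no $\lambda_g$ and that the quadratic operator produces \emph{both} a genus-splitting product $\sum_{g_1+g_2=g}\langle\langle\cdots\rangle\rangle\langle\langle\cdots\rangle\rangle$ and a connected, genus-lowering term of the form $\langle\langle\cdots\rangle\rangle_{g-1}$.

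The bridge to $\lambda_g$ is Mumford's Grothendieck--Riemann--Roch computation of $\mathrm{ch}(\mathbb{E})$: since the Hodge bundle on $\overline{\mathcal{M}}_{g,n}(X,\beta)$ is pulled back along the stabilization map from $\overline{\mathcal{M}}_{g,n}$, the Hodge-deformed potential is obtained from $\mathcal{Z}$ by a Givental-type quantized symplectic operator $\hat\Delta(s)=\exp\big(\sum_j s_j\,\widehat{\mathrm{ch}_{2j-1}(\mathbb{E})}\big)$, and $\lambda_g=c_g(\mathbb{E})$ is the degree-$g$ part of $\exp\big(\sum_k(-1)^{k-1}(k-1)!\,\mathrm{ch}_k(\mathbb{E})\big)$, using Mumford's vanishing $\mathrm{ch}_{2j}(\mathbb{E})=0$ for $j\ge1$. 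I would then conjugate, $\hat\Delta L_n\hat\Delta^{-1}\cdot(\hat\Delta\mathcal{Z})=0$, and extract the $\lambda_g$-graded component. The decisive geometric inputs are the two restriction properties of the top Hodge class: $\lambda_g|_{\delta_{\mathrm{irr}}}=0$, since over a curve with a non-separating node the Hodge bundle acquires a trivial summand, and $\lambda_g|_{\delta_{\mathrm{sep}}}=\lambda_{g_1}\boxtimes\lambda_{g_2}$. The first kills the connected genus-lowering term, which is precisely why $\Theta_{g,n,m,\beta}$ contains \emph{no} second-derivative contribution; the second converts the separating contribution into the sum $\sum_{g_1+g_2=g}\langle\langle\tau_m(\phi_\beta)\tau_{-s-1+n-j}(c_1(X)^j\cup\phi^\alpha);\lambda_{g_1}\rangle\rangle_{g_1}\langle\langle\tau_s(\phi_\alpha);\lambda_{g_2}\rangle\rangle_{g_2}$ of the third line. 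Because $\lambda_g$ is pulled back along the forgetful maps governing the $\psi$-class comparisons that produce the linear part of $L_n$, capping with $\lambda_g$ leaves the coefficients $e_{n+1-j}$ and the factors $c_1(X)^j$ intact, giving the first two lines of $\Theta$ with $\lambda_g$ inserted.

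The step I expect to be the main obstacle is controlling the conjugation $\hat\Delta L_n\hat\Delta^{-1}$ so that its $\lambda_g$-graded component closes on $\lambda_g$ alone, i.e.\ that no lower Hodge classes $\lambda_{g-1},\dots$ survive the top-degree extraction. Here one must use heavily that $\lambda_g$ is the \emph{top} Chern class, so that $\lambda_g^2=0$ and the products $\lambda_g\lambda_k$ with $k\ge1$ exceed the available dimension, together with $\mathrm{ch}_{2j}(\mathbb{E})=0$, to show that the only surviving effect of the quantized operator is through the two boundary restrictions above. A secondary but delicate bookkeeping issue is the interplay of the dilaton shift $\tilde t_r^\alpha=t_r^\alpha-\delta_r^1\delta_\alpha^1$ with the differentiation in $t_m^\beta$ and the handling of the unstable and genus-zero ranges, which is exactly what produces the asymmetry between the shifts $r+b_\alpha-\tfrac12$ and $m+b_\beta+\tfrac12$ in the first two lines together with the constant term $\delta_g^0\delta_m^0\sum_\alpha t_0^\alpha\int_X c_1(X)^{n+1}\cup\phi_\alpha\cup\phi_\beta$ in the fourth. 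Once these are settled, the vanishing of $\Theta_{g,n,m,\beta}$ follows directly from $L_n\mathcal{Z}=0$.
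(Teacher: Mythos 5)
Your proposal takes a genuinely different route from the paper, but it has a genuine gap at exactly the step you flag as ``the main obstacle,'' and there are concrete reasons to believe that step cannot be completed in the form you describe. The paper does not work with the Virasoro operators of $X$ at all: it forms $Y=X\times\mathbb{P}^1$, invokes the result of Coates--Givental--Tseng to transfer the Virasoro conjecture from $X$ to $Y$, and then uses Behrend's product formula to show that the fiber degree-$0$ virtual class of $\overline{\mathcal{M}}_{g,n}(Y,(B,0))$ equals $\left([\overline{\mathcal{M}}_{g,n}(X,B)]_{vir}\times[\mathbb{P}^1]\right)\cap\left((-1)^g\lambda_g\otimes 1+(-1)^{g-1}\lambda_{g-1}\otimes 2H\right)$. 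Hence the fiber degree-$0$ descendant potential $\widetilde{\mathcal{D}}$ of $Y$ \emph{is} a generating function of $\lambda_g$- and $\lambda_{g-1}$-Hodge integrals of $X$ (linear in the $u$-variables), and $\Theta_{g,n,m,\beta}$ is read off as the coefficient of $u_m^\beta$ in $\widetilde{\mathcal{D}}^{-1}L_n\widetilde{\mathcal{D}}$, where $L_n$ are the Virasoro operators \emph{of $Y$}. This is why the arguments of the elementary symmetric functions in $\Theta$ are $r+b_\alpha-\tfrac12$ and $m+b_\beta+\tfrac12$: these are precisely $b^Y_{(\alpha,0)}$ and $b^Y_{(\beta,1)}$ for the $(d+1)$-dimensional variety $Y$, as computed in the paper's Lemma 3.2.

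This dimension bookkeeping is the concrete obstruction to your plan. The operators $L_n$ for $X$ itself carry arguments $r+b_\alpha$ with $b_\alpha=p_\alpha-\tfrac{d-1}{2}$, and conjugation by a quantized Riemann--Roch operator $\hat\Delta(s)=\exp\big(\sum_j s_j\widehat{\mathrm{ch}_{2j-1}(\mathbb{E})}\big)$ provides no mechanism for shifting every such argument uniformly by $\pm\tfrac12$, nor for producing the asymmetric second line of $\Theta$; those shifts encode the jump in conformal dimension from $d$ to $d+1$, which in the paper comes from the geometry of $Y$, not from any property of the Hodge twist of $X$. Two further points in your sketch are unjustified. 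First, by Mumford's GRR formula, $\mathrm{ch}_{2j-1}(\mathbb{E})$ is a combination of $\kappa$-classes, $\psi$-classes, \emph{and} boundary pushforwards, so the conjugated operators $\hat\Delta L_n\hat\Delta^{-1}$ contain $\kappa$- and $\psi$-corrections; your claim that after top-degree extraction ``the only surviving effect of the quantized operator is through the two boundary restrictions'' of $\lambda_g$ is a hope, not an argument, and the $\kappa$-contributions do not obviously cancel. Second, $L_n\mathcal{Z}=0$ is an identity among numbers, so ``extracting the $\lambda_g$-graded component'' requires differentiating the full multi-parameter twisted constraint in the $s$-variables along the polynomial expressing $\lambda_g$ in the $\mathrm{ch}_k(\mathbb{E})$; these derivatives also hit the $s$-dependent conjugated operator, producing terms your outline does not account for. (A minor point: your normalization $\mathcal{Z}=\exp\big(\sum_g\hbar^{g-1}F_g\big)$ should be $\hbar^{2g-2}$.) If the conjugation route could be completed it would be interesting --- it would bypass the external input from Coates--Givental--Tseng on toric bundles --- but as written the central identification of the conjugated constraint with $\Theta_{g,n,m,\beta}$ is missing, and the half-integer shifts indicate it cannot hold in the stated form.
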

It is well known that 
Virasoro conjecture for genus-0 Gromov-Witten invariants of any target varieties always hold true (cf.\cite{liu1998virasoro}, \cite{dubrovin1999frobenius}, \cite{getzler1999virasoro}, \cite{givental2004symplectic}). For $\lambda_{g}$ conjecture, we can prove
\begin{theorem}\label{thm:lambda-g-conj-g=0}
For any smooth projective variety, the $\lambda_{g}$ conjecture holds for genus-0 Hodge integrals of $X$  with $\lambda_{g}$ insertion.  
\end{theorem}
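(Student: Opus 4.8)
The plan is to deduce the genus-zero case of Conjecture~\ref{conj:lambda-g} from the genus-zero Virasoro conjecture, which is unconditionally a theorem for every smooth projective variety (\cite{liu1998virasoro}, \cite{dubrovin1999frobenius}, \cite{getzler1999virasoro}, \cite{givental2004symplectic}). The mechanism that makes genus zero special is that the Hodge bundle $\mathbb{E}$ has rank $g=0$, so that $\lambda_0=1$. Hence every Hodge correlator occurring in $\Theta_{0,n,m,\beta}$ collapses to an ordinary genus-zero descendant correlator, $\langle\langle\,\cdots\,;\lambda_0\rangle\rangle_0=\langle\langle\,\cdots\,\rangle\rangle_0$, and in the quadratic line the constraint $g_1+g_2=0$ forces $g_1=g_2=0$ with $\lambda_{g_1}=\lambda_{g_2}=1$. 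Consequently $\Theta_{0,n,m,\beta}$ is assembled entirely from genus-zero Gromov--Witten data, and the assertion $\Theta_{0,n,m,\beta}=0$ carries no genuinely ``Hodge'' content beyond ordinary genus-zero Gromov--Witten theory.

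The heart of the argument is to recognize the identity $\Theta_{0,n,m,\beta}=0$ as the $n$-th genus-zero Virasoro constraint for the descendant potential carrying a single distinguished insertion $\tau_m(\phi_\beta)$. I would first recall the explicit Virasoro operators $L_n$ for $n\geq-1$ in the Eguchi--Hori--Xiong form corrected by Katz (\cite{eguchi1998quantum}, \cite{cox1999mirror}), whose coefficients are assembled from the elementary symmetric functions $e_k$ evaluated at arguments shifted by the charges $b_\alpha$, together with cup products by $c_1(X)^j$. One then matches $\Theta_{0,n,m,\beta}$ line by line with this operator inserted inside the genus-zero bracket: the first line is the linear, dilaton-shifted part of $L_n$ acting on the background insertions and paired against $\tau_m(\phi_\beta)$ in a two-point function; the second line is $L_n$ acting on the distinguished insertion $\tau_m(\phi_\beta)$ itself, whose $e_{n+1-j}$-arguments are those of a background leg raised by one, reflecting that the operator now acts on the distinguished insertion rather than on a summed-over background leg; the third line is the quadratic part of $L_n$, in which the genus-zero degeneration reproduces the product $\langle\langle\,\cdot\,\rangle\rangle_0\langle\langle\,\cdot\,\rangle\rangle_0$ once the symmetric summation over $(s,\alpha)$ absorbs the usual factor $\tfrac12$; and the final line is the classical intersection contribution. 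With this identification, the genus-zero Virasoro theorem yields $\Theta_{0,n,m,\beta}=0$ as an identity of formal power series on the big phase space, for all $n\geq-1$, $m\geq0$ and $\beta$.

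The main obstacle is not conceptual but a delicate bookkeeping of coefficients and signs: one must verify that the three distinct shifts entering the arguments of $e_{n+1-j}$, namely $b_\alpha-\tfrac12$, $b_\beta+\tfrac12$ and $b^\alpha-\tfrac32$, are precisely those produced by $L_n$ acting, respectively, on a background leg, on the distinguished leg, and on the two legs contracted through the Poincar\'e pairing $\eta^{\alpha\beta}$ in the quadratic part, where the dual charge $b^\alpha=1-b_\alpha$ is forced by duality; that the dilaton shift $\tilde t_r^\alpha=t_r^\alpha-\delta_r^1\delta_\alpha^1$ is exactly the one built into $L_n$; and that the sign $(-1)^{-s-1}$ together with the Poincar\'e duals $\phi^\alpha$ reproduces the correct contraction in the boundary term. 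Once these identities are confirmed, no further input is required, since every correlator involved is genus zero and the genus-zero Virasoro constraints hold in complete generality.
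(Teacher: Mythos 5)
Your opening observation is sound: in genus zero $\lambda_0=1$, so every bracket in $\Theta_{0,n,m,\beta}$ is an ordinary genus-zero descendant correlator, and one expects the statement to follow from the unconditional genus-zero Virasoro theorem. However, the step you defer to ``delicate bookkeeping'' --- matching $\Theta_{0,n,m,\beta}$ line by line with the $L_n$-constraint of $X$ carrying a distinguished insertion $\tau_m(\phi_\beta)$ --- is precisely where the argument fails. The Virasoro operators of $X$ (Proposition~\ref{prop:Ln-expression} specialized to $X$ itself, i.e.\ the operators of subsection~\ref{subsec:virasoro-gw-inva}) produce the shifts $b_\alpha$ on every summed leg, $b_\beta$ on the distinguished leg (differentiating $[\hbar^{-2}]\mathcal{D}^{-1}L_n\mathcal{D}=0$ with respect to $t_m^\beta$ hits the coefficient $\tilde t_m^\beta$ and yields $e_{n+1-j}(m+b_\beta,\dots,m+n+b_\beta)$, the \emph{same} charge as a background leg), and $-s-1+b^\alpha$ in the quadratic part. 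There is no mechanism in this differentiation that produces the asymmetric half-integer shifts $b_\alpha-\tfrac12$, $b_\beta+\tfrac12$, $-s+b^\alpha-\tfrac32$ appearing in $\Theta_{0,n,m,\beta}$. The discrepancy is not a matter of signs; already for $n=0$ one computes
\begin{align*}
\Theta_{0,0,m,\beta}-\frac{\partial}{\partial t_m^\beta}\Bigl([\hbar^{-2}]\mathcal{D}^{-1}L_0\mathcal{D}\Bigr)
=\tfrac12\langle\langle\tau_m(\phi_\beta)\rangle\rangle_0-\tfrac12\sum_{r,\alpha}\tilde t_r^\alpha\langle\langle\tau_r(\phi_\alpha)\tau_m(\phi_\beta)\rangle\rangle_0,
\end{align*}
which vanishes only after additionally invoking the (differentiated) dilaton equation; for general $n$ the half-shifted elementary symmetric functions mix all the $e_k$'s, so $\Theta_{0,n,m,\beta}$ is at best an unidentified combination of derivatives of the $L_k$-constraints of $X$ for $k\le n$ together with string/dilaton-type corrections. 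Your proposal neither identifies these corrections nor proves they close up.

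The reason for the half-integer shifts is that $\Theta_{g,n,m,\beta}$ is assembled from the Virasoro operators of $Y=X\times\mathbb{P}^1$, not of $X$: since $\dim Y=d+1$, the charges of $\phi_\alpha\otimes1$ and $\phi_\alpha\otimes H$ are $b_\alpha-\tfrac12$ and $b_\alpha+\tfrac12$ respectively, and the distinguished insertion $\tau_m(\phi_\beta)$ arises as the coefficient of the variable $u_m^\beta$ dual to $\phi_\beta\otimes H$, which is why its shift differs by $+1$ from the background legs. This is exactly the paper's route: Proposition~\ref{prop:deg-0-vir-fun-class} identifies the fiber degree-$0$ virtual class of $Y$ (producing the $\lambda_g$ insertions, trivial in genus $0$), Theorem~\ref{thm:D-1Ln-D-formula} shows $[\hbar^{-2}]\widetilde{\mathcal{D}}^{-1}L_n\widetilde{\mathcal{D}}$ equals $-2\Psi_{0,n}+\sum_{m,\beta}u_m^\beta\Theta_{0,n,m,\beta}$, and the genus-zero Virasoro theorem of Liu--Tian applied to $Y$ (not to $X$) then forces $\Theta_{0,n,m,\beta}=0$. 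To repair your argument you must either route it through $X\times\mathbb{P}^1$ in this way, or genuinely carry out the expansion of $\Theta_{0,n,m,\beta}$ in terms of $X$'s own constraints plus universal equations --- a real argument, not a verification of a line-by-line match, since that match is false as stated.
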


\subsection{Givental type formula for CohFT associated to  $\lambda_{g}$ invariants}
Let $\Omega^{t}=\{\Omega^t_{g,n}:H^*(X;\mathbb{C})^{\otimes n}\rightarrow H^*(\overline{\mathcal{M}}_{g,n};\mathbb{C})\}_{2g-2+n>0}$ be the cohomological field theory associated to Gromov-Witten invariants of $X$ with base point $t\in H^*(X;\mathbb{C})$. Motivated by Givental-Teleman classification theorem for semisimple cohomological field theories (cf. \cite{givental2001semisimple}, \cite{teleman2012structure}), we have
\begin{theorem}[Theorem~\ref{thm:lambda-g-times-CohFT}]
\label{thm:givental-type-formula-lambda-g}
If $t\in H^*(X;\mathbb{C})$ is a semisimple point of the  CohFT associated to Gromov-Witten theory of $X$, there exist $R$ matrix, such that   
for any vector $w_1,..,w_n\in H^*(X;\mathbb{C})$,
 \begin{align*}
\lambda_{g}\cdot\Omega^{t}_{g,n}(w_1,...,w_n)=\sum_{\Gamma\in G_{g,n}^{c}}\frac{1}{|\Aut(\Gamma)|}\xi_{\Gamma,*}\left(\prod_{v}\sum_{k=0}^{\infty}\frac{1}{k!}\pi_{k*}\left(\lambda_{g(v)}\cdot\omega^{t}_{g(v),n(v)+k}(\cdots)\right)\right)    
 \end{align*}
 where $G_{g,n}^{c}\subset G_{g,n}$ is the set of  stable graphs which is a tree.  $\omega^{t}$ is the topological part of $\Omega^{t}$. All the insertions $(...)$ are the same as in equation~\eqref{eqn:tele-recontr}.
 \end{theorem}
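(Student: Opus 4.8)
The plan is to take the Givental--Teleman reconstruction of the semisimple CohFT $\Omega^t$ recorded in equation~\eqref{eqn:tele-recontr}, which already writes $\Omega^t_{g,n}(w_1,\dots,w_n)$ as a sum over \emph{all} stable graphs $\Gamma\in G_{g,n}$ of gluing pushforwards $\xi_{\Gamma,*}$ of vertex contributions $\sum_{k}\frac{1}{k!}\pi_{k*}(\omega^t_{g(v),n(v)+k}(\cdots))$ built from the topological part $\omega^t$ and a fixed $R$-matrix. Keeping the same $R$-matrix, I would multiply this identity by $\lambda_g$ and show that the Hodge class forces the sum to collapse onto trees while distributing as $\prod_v\lambda_{g(v)}$ over their vertices.

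First I would move $\lambda_g$ inside each summand by the projection formula, $\lambda_g\cdot\xi_{\Gamma,*}(\alpha)=\xi_{\Gamma,*}(\xi_\Gamma^*\lambda_g\cdot\alpha)$, so that everything reduces to the pullback $\xi_\Gamma^*\lambda_g$. For this I would invoke the behaviour of the Hodge bundle under normalization: at a separating node $\mathbb{E}$ splits as the direct sum of the Hodge bundles of the two components, while at a non-separating node it gains an extra trivial summand $\mathcal{O}$, coming from the meromorphic differential with simple poles of opposite residue at the two branches. Iterating over all edges of $\Gamma$ yields $\xi_\Gamma^*\mathbb{E}=\bigl(\bigoplus_v\mathbb{E}_{g(v)}\bigr)\oplus\mathcal{O}^{\oplus h^1(\Gamma)}$, where $h^1(\Gamma)$ is the first Betti number of $\Gamma$.

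The decisive step is then a top Chern class count. Since $c(\mathcal{O})=1$ and the rank of $\bigoplus_v\mathbb{E}_{g(v)}$ equals $\sum_v g(v)=g-h^1(\Gamma)$, I obtain $\xi_\Gamma^*\lambda_g=c_g(\xi_\Gamma^*\mathbb{E})=\prod_v\lambda_{g(v)}$ when $h^1(\Gamma)=0$ and $\xi_\Gamma^*\lambda_g=0$ whenever $h^1(\Gamma)>0$. This is precisely the classical vanishing of $\lambda_g$ on the non-compact-type locus, and it removes every graph carrying a cycle, leaving exactly the trees $\Gamma\in G_{g,n}^c$ with the Hodge class factorized over their vertices.

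It remains to push each factor $\lambda_{g(v)}$ into its vertex contribution. Because $\lambda_{g(v)}$ is pulled back from $\overline{\mathcal{M}}_{g(v)}$ along the forgetful maps $\pi_k$ that drop the $k$ auxiliary legs produced by the $R$-matrix translation, one has $\pi_k^*\lambda_{g(v)}=\lambda_{g(v)}$, and a second application of the projection formula gives $\lambda_{g(v)}\cdot\pi_{k*}(\omega^t_{g(v),n(v)+k}(\cdots))=\pi_{k*}(\lambda_{g(v)}\cdot\omega^t_{g(v),n(v)+k}(\cdots))$, which is the vertex term of the asserted formula. The part I expect to require the most care is the bookkeeping around the $R$-matrix dressing: confirming that the edge and leg operators together with the $\psi$-class insertions at the auxiliary legs are inert with respect to multiplication by $\lambda_g$, that the decomposition of $\xi_\Gamma^*\mathbb{E}$ holds in the relative/universal setting over $\prod_v\overline{\mathcal{M}}_{g(v),n(v)}$, and that the factors $1/|\Aut(\Gamma)|$ are unaffected when the index set is restricted from $G_{g,n}$ to $G_{g,n}^c$.
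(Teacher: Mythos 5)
Your proposal is correct and takes essentially the same route as the paper: the paper's proof consists precisely of invoking the Teleman reconstruction \eqref{eqn:tele-recontr} together with the Hodge-bundle gluing properties \eqref{eqn:prop-Hodge-1} and \eqref{eqn:prop-Hodge-2}, which is exactly the projection-formula argument you carry out (vanishing of $\xi_\Gamma^*\lambda_g$ when $h^1(\Gamma)>0$, factorization $\xi_\Gamma^*\lambda_g=\prod_v\lambda_{g(v)}$ over trees, and compatibility with the forgetful maps $\pi_k$). Your write-up simply supplies the details that the paper's one-line proof leaves implicit.
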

Let $\mathcal{A}_{t}^{\lambda}(\hbar,\mathbf{t})$ be the total ancestor potential of $\lambda_{g}$ invariants. As an application of the  reconstruction theorem~\ref{thm:givental-type-formula-lambda-g}, we obtain  an explicit graph sum formula of $\mathcal{A}_{t}^{\lambda}(\hbar,\mathbf{t})$ in terms of descendant $\lambda_{g}$ invariants over moduli space of stable curves via an $R$ matrix action. The precise formula will be given in Corollary~\ref{cor:total-ancestor-potential-lambda-g} in section~\ref{subsec:recons-lambda-g-invariants}. 
\subsection{New types of universal constraints for descendant Gromov-Witten invariants}
In \cite{janda2017double}, a remarkable formula for the top Chern class of Hodge bundle
$\lambda_g$ was proposed, which is supported on the boundary divisor of curves with
a non-separating node.
That is, in $R^{g}(\overline{\mathcal{M}}_{g,n})$, we have
\begin{align*}
\lambda_g=\frac{(-1)^g}{2^g} \mathsf{P}_{g}^{g}(0,\dots,0) .
\end{align*}

Together this remarkable formula for $\lambda_g$ and Theorem~\ref{thm:virasoro-lambda-g-conj-rela}, we obtain
a new class of universal constraints for pure descendant Gromov-Witten invariants. 
For $g\geq0, n\geq-1$,
we define
\begin{align}\label{eqn:theta-Pixton-class-gnmbeta}
&\Theta^{\mathsf{P}}_{g,n,m,\beta}
\nonumber\\=    
&\sum_{j=0}^{n+1}\sum_{r,\alpha}\sum_{\Gamma\in G_{g,2}}e_{n+1-j}(r+b_{\alpha}-\frac{1}{2},...,r+n+b_{\alpha}-\frac{1}{2})\tilde{t}_r^{\alpha} 
\langle\langle\tau_{r+n-j}(c_1(X)^{j}\cup\phi_\alpha)\tau_{m}(\phi_\beta);\mathsf{P}^g_\Gamma(0,0)\rangle\rangle_{\Gamma}
\nonumber\\&+
\sum_{j=0}^{n+1}\sum_{\Gamma\in G_{g,1}}e_{n+1-j}(m+b_{\beta}+\frac{1}{2},...,m+n+b_{\beta}+\frac{1}{2})\langle\langle \tau_{m+n-j}(c_1(X)^{j}\cup\phi_{\beta});\mathsf{P}^g_\Gamma(0)\rangle\rangle_{\Gamma}
\nonumber\\&+\sum_{j=0}^{n+1}\sum_{s,\alpha}(-1)^{-s-1}e_{n+1-j}(-s+b^{\alpha}-\frac{3}{2},...,-s+n+b^{\alpha}-\frac{3}{2})
\nonumber\\&\quad\quad\cdot\sum_{g_1+g_2=g}\sum_{\substack{\Gamma_1\in G_{g_1,2}\\ \Gamma_2\in G_{g_2,1}}}\langle\langle\tau_{m}(\phi_\beta)\tau_{-s-1+n-j}(c_1(X)^j\cup\phi^\alpha);\mathsf{P}^{g_1}_{\Gamma_1}(0,0)\rangle\rangle_{\Gamma_1}
\langle\langle\tau_{s}(\phi_\alpha);\mathsf{P}^{g_2}_{\Gamma_2}(0)\rangle\rangle_{\Gamma_2}
\nonumber\\&+\delta_{g}^0\delta_{m}^{0}\sum_{\alpha}t_0^{\alpha}\int_{X}c_1(X)^{n+1}\cup\phi_\alpha\cup\phi_\beta
\end{align}
where the correlation functions $\langle\langle\mathcal{W}_1\ldots\mathcal{W}_k;\mathsf{P}_{\Gamma}^g(0,...,0)\rangle\rangle_{\Gamma}$ is defined in subsection~\ref{subsec:proof-of-thm1.5}. 
\begin{theorem}\label{thm:new-type-uni-constr-descen-gw}
For any smooth projective varieties with semisimple quantum cohomolgy or smooth algebraic curves, 
for any $m$ and 
$\beta$, the genus-$g$ pure descendant Gromov-Witten invariants of $X$  satisfy the following
 identity
\begin{align*}
\Theta^{\mathsf{P}}_{g,n,m,\beta}=0.
\end{align*}
    
\end{theorem}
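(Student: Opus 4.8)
The plan is to obtain this statement as a direct consequence of the $\lambda_g$ conjecture, established in Theorem~\ref{thm:virasoro-lambda-g-conj-rela} under exactly these hypotheses, by feeding the formula $\lambda_g=\frac{(-1)^g}{2^g}\mathsf{P}_g^g(0,\dots,0)$ of \cite{janda2017double} into every Hodge-integral correlator that appears in $\Theta_{g,n,m,\beta}$. Since $X$ has semisimple quantum cohomology or is a smooth algebraic curve, the Virasoro conjecture holds, so Theorem~\ref{thm:virasoro-lambda-g-conj-rela} yields $\Theta_{g,n,m,\beta}=0$ for every $n\geq-1$. It therefore remains only to verify that replacing each $\lambda$-insertion by its Pixton-class expansion carries $\Theta_{g,n,m,\beta}$ to $\Theta^{\mathsf{P}}_{g,n,m,\beta}$ as defined in \eqref{eqn:theta-Pixton-class-gnmbeta}.

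First I would make the graph-sum correlators precise. For a stable graph $\Gamma\in G_{g',k}$, the tautological class $\mathsf{P}_\Gamma^{g'}(0,\dots,0)$ is the pushforward under the boundary gluing map $\xi_{\Gamma,*}$ of a product of monomials in $\psi$- and $\kappa$-classes on the vertex moduli. Pulling it back to the moduli of stable maps and applying the splitting axiom of Gromov--Witten theory decomposes the resulting integral into a product of pure descendant invariants distributed over the vertices of $\Gamma$; I define $\langle\langle W_1\cdots W_k;\mathsf{P}_\Gamma^{g'}(0,\dots,0)\rangle\rangle_\Gamma$ to be this contribution, absorbing the scalar $\frac{(-1)^{g'}}{2^{g'}}$. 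With this convention the formula of \cite{janda2017double} becomes, for any insertions,
\begin{align*}
\langle\langle W_1\cdots W_k;\lambda_{g'}\rangle\rangle_{g'}=\sum_{\Gamma\in G_{g',k}}\langle\langle W_1\cdots W_k;\mathsf{P}_\Gamma^{g'}(0,\dots,0)\rangle\rangle_\Gamma.
\end{align*}

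Next I would substitute this identity term by term. The first two summands of $\Theta_{g,n,m,\beta}$ each carry a single $\lambda_g$ over a genus-$g$ space with two and one markings respectively, hence expand over $G_{g,2}$ and $G_{g,1}$, reproducing the first two lines of \eqref{eqn:theta-Pixton-class-gnmbeta}; the final classical term contains no $\lambda$-class and is unchanged. In the quadratic term the two factors carry independent insertions $\lambda_{g_1}$ and $\lambda_{g_2}$ over spaces with two and one markings, so applying the identity to each factor separately yields the double sum over $\Gamma_1\in G_{g_1,2}$ and $\Gamma_2\in G_{g_2,1}$, with the product of scalars collapsing to $\frac{(-1)^{g}}{2^{g}}$ since $g_1+g_2=g$. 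Matching the four resulting expressions against the definition gives $\Theta^{\mathsf{P}}_{g,n,m,\beta}=\Theta_{g,n,m,\beta}=0$.

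The work is entirely bookkeeping rather than new geometry, so the delicate point is only the consistency of the substitution. One must check that the Pixton replacement acts solely on the moduli-of-curves factor and commutes with everything living on the target side---the shifted times $\tilde{t}_r^\alpha$, the Poincar\'e-pairing contractions $\phi^\alpha$, and the $c_1(X)^j$ twists---which it does because the DR formula is an identity of tautological classes independent of the target. The only genuinely structural check is in the quadratic term, where one must confirm that the genus decomposition $g_1+g_2=g$ of the Hodge insertions is compatible with applying the formula to each factor separately and that the normalizations multiply correctly; granting this, the two black boxes---Theorem~\ref{thm:virasoro-lambda-g-conj-rela} and the formula of \cite{janda2017double}---combine to give the claim.
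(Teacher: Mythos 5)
Your overall strategy coincides with the paper's: invoke Theorem~\ref{thm:virasoro-lambda-g-conj-rela} to get $\Theta_{g,n,m,\beta}=0$, then use Pixton's formula $\lambda_g=\frac{(-1)^g}{2^g}\mathsf{P}_g^g(0,\dots,0)$ to convert each Hodge correlator into a graph sum. However, there is a genuine gap in the conversion step, precisely at the point you dismiss as ``entirely bookkeeping.'' The class $\mathsf{P}^g_\Gamma(0,\dots,0)$ is a polynomial in the $\psi_h$-classes on the vertex moduli spaces of \emph{curves}, and it enters the correlator through the pullback $\pi_m^*\St^*$. Under this pullback those $\psi_h$-classes become \emph{ancestor} classes $\bar\psi$ on the moduli of stable maps, which differ from the descendant classes $\psi$ by boundary corrections (cf.\ \eqref{eqn:D-Y-vir}). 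Consequently, after restricting the virtual class to the boundary stratum indexed by $\Gamma$ and applying the splitting axiom, the vertex factors are \emph{mixed} ancestor--descendant invariants, not products of pure descendant invariants as you assert. Turning them into pure descendants is exactly where the nontrivial input enters: the paper uses the descendant--ancestor correspondence of Lemma~\ref{lem:anc-desc} (from \cite{janda2024universal}), built on the operator $T$ of \eqref{eq:T-operator}, and accordingly defines the edge contributions of $\langle\langle\cdots;\mathsf{P}^g_\Gamma\rangle\rangle_\Gamma$ by the bivector $T^{a_h}(\phi_\alpha)\otimes T^{a_{h'}}(\phi^\alpha)$ rather than $\tau_{a_h}(\phi_\alpha)\otimes\tau_{a_{h'}}(\phi^\alpha)$. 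Since $T^a(\phi_\alpha)=\tau_a(\phi_\alpha)$ plus genus-zero correction terms with coefficients that are functions on the big phase space, this is not a normalization convention but a structural feature of the constraints; omitting it produces a different (and incorrect) identity whenever an edge carries a positive power of $\psi_h$, i.e.\ for all $g\geq2$.

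A second, related defect is that you \emph{define} the graph correlators to be ``whatever the splitting produces'' and absorb the scalar $\frac{(-1)^{g}}{2^{g}}$ into them, which makes your key displayed identity true by fiat but proves a statement about your own objects, not about $\Theta^{\mathsf{P}}_{g,n,m,\beta}$ as fixed in \eqref{eqn:theta-Pixton-class-gnmbeta} (whose building blocks are specified in subsection~\ref{subsec:proof-of-thm1.5}, without the scalar and with the $T$-twisted edges). The paper's Proposition~\ref{prop:TaTb} fills this gap honestly: by multilinearity it suffices to treat insertions $\mathcal{W}_i=T^{a_i}(v_i)$; Lemma~\ref{lem:anc-desc} converts these to ancestor insertions, so that the whole integrand is pulled back from $\overline{\mathcal{M}}_{g,k}$; one then pushes forward to $\overline{\mathcal{M}}_{g,k}$, obtaining $\int_{\overline{\mathcal{M}}_{g,k}}\prod_i\psi_i^{a_i}\cdot\Omega^{\mathbf{t}}_{g,k}(v_1,\dots,v_k)\cdot\mathsf{P}_g^g(0,\dots,0)$, applies the projection formula and the splitting axioms \eqref{eqn:split-ax-1}, \eqref{eqn:split-ax-2} of $\Omega^{\mathbf{t}}$, and finally converts each vertex back to descendants, which is what places the $T$-operators on the edges. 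Your proposal would need this entire mechanism (or an equivalent substitute) to be a proof; as written, the step from the splitting axiom to ``pure descendant invariants distributed over the vertices'' does not hold.
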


As an application of the explicit formula~\eqref{eqn:theta-Pixton-class-gnmbeta} of $\Theta^{\mathsf{P}}_{g,n,m,\beta}$, we prove
\begin{theorem}\label{thm:lambda-g-conj-g=1}
For any smooth projective variety, the $\lambda_{g}$ conjecture holds for genus one Hodge integrals of $X$  with $\lambda_{g}$ insertion.  
\end{theorem}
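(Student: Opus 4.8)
The plan is to deduce the genus-one case from the genus-zero case (Theorem~\ref{thm:lambda-g-conj-g=0}) by exploiting the fact that, in genus one, the top Hodge class is pushed entirely onto the non-separating boundary. By the formula for $\lambda_g$ recalled above we have $\lambda_1=-\tfrac12\mathsf P_1^1(0,\dots,0)$, and this identity is compatible with the very definition of $\Theta^{\mathsf P}_{g,n,m,\beta}$: replacing every $\lambda_{g_i}$ insertion in $\Theta_{g,n,m,\beta}$ by $\tfrac{(-1)^{g_i}}{2^{g_i}}\sum_\Gamma\mathsf P^{g_i}_\Gamma$ and using $g_1+g_2=g$ in the quadratic term gives the uniform relation
\[
\Theta_{g,n,m,\beta}=\frac{(-1)^g}{2^g}\,\Theta^{\mathsf P}_{g,n,m,\beta}.
\]
In particular $\Theta_{1,n,m,\beta}=-\tfrac12\,\Theta^{\mathsf P}_{1,n,m,\beta}$, so it suffices to prove $\Theta^{\mathsf P}_{1,n,m,\beta}=0$ for an \emph{arbitrary} smooth projective $X$. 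The decisive gain is that $\Theta^{\mathsf P}_{1,n,m,\beta}$ is written purely in terms of descendant Gromov--Witten invariants weighted by tautological classes on the boundary, with no remaining $\lambda$-insertion left to control.

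Next I would make the genus drop explicit. In genus one the class $\mathsf P_1^1(0,\dots,0)$ is supported on the single divisor of curves with a non-separating node, so only stable graphs $\Gamma$ carrying a self-loop contribute to the correlators $\langle\langle\,\cdots\,;\mathsf P^1_\Gamma(0,\dots,0)\rangle\rangle_\Gamma$. For each such $\Gamma$ the gluing map $\xi_\Gamma\colon\overline{\mathcal M}_{0,k+2}\to\overline{\mathcal M}_{1,k}$ normalises the node, and the projection formula rewrites every genus-one correlator as a \emph{genus-zero} correlator with two additional marked points carrying the branches of the node, summed over the K\"unneth decomposition $\sum_\mu\phi_\mu\otimes\phi^\mu$ of the diagonal and weighted by the $\psi$-classes $\psi_+,\psi_-$ prescribed by the formula for $\mathsf P_1^1$. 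Since the geometric genus is now zero, every surviving Hodge factor is $\lambda_0=1$, and $\Theta^{\mathsf P}_{1,n,m,\beta}$ becomes a finite sum of \emph{pure genus-zero} descendant expressions: the linear terms come from the loop on the single vertex, while the two contributions $(g_1,g_2)=(0,1),(1,0)$ to the quadratic term each retain one genuinely genus-zero factor and degenerate the genus-one factor to genus zero in the same way.

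I would then identify the resulting genus-zero expression with a combination of the genus-zero quantities $\Theta_{0,n',m',\beta'}$. Concretely, the two node insertions $\phi_\mu,\phi^\mu$ play the role of the extra summation indices in the definition of $\Theta_0$; their conformal weights satisfy $b_\mu+b^\mu=1$, so the shifted arguments of the elementary symmetric polynomials $e_{n+1-j}$ at the two half-edges are exactly complementary, and the $\psi_\pm$-weights supply the descendant shifts that convert the genus-one coefficient patterns into the genus-zero ones. After this bookkeeping the whole of $\Theta^{\mathsf P}_{1,n,m,\beta}$ is a linear combination of expressions $\Theta_{0,n',m',\beta'}$, each of which vanishes by Theorem~\ref{thm:lambda-g-conj-g=0}, valid for \emph{every} smooth projective variety. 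This yields $\Theta^{\mathsf P}_{1,n,m,\beta}=0$ and hence $\Theta_{1,n,m,\beta}=0$.

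The main obstacle is precisely this last combinatorial matching. One must verify that cutting the non-separating node and redistributing the $\psi_+,\psi_-$ weights transforms the genus-one coefficients $e_{n+1-j}(r+b_\alpha-\tfrac12,\dots,r+n+b_\alpha-\tfrac12)$ into the genus-zero coefficients with the node insertions included, using the additive recursion for elementary symmetric polynomials together with the identity $b^\alpha=1-b_\alpha$ and the string/dilaton comparison at the two branches. This is the genus-one instance of the general principle that the genus-$g$ Virasoro-type constraint is the boundary pushforward of the genus-$(g-1)$ one; making the constant factors and the $c_1(X)^j$ cup products match on the nose is the delicate step, but it is a finite check once the gluing pushforward of $\mathsf P_1^1$ is written out explicitly.
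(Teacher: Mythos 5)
Your reduction coincides with the paper's own strategy up to the last step: the identity $\Theta_{g,n,m,\beta}=\frac{(-1)^g}{2^g}\,\Theta^{\mathsf P}_{g,n,m,\beta}$ (via Proposition~\ref{prop:TaTb} and $\lambda_g=\frac{(-1)^g}{2^g}\mathsf P_g^g(0,\dots,0)$), and then the use of the genus-one Pixton relation $\mathsf P_1^1(0,\dots,0)=-\tfrac{1}{12}\,\xi_{\Gamma*}(1)$, supported on the non-separating boundary divisor, to cut the node and rewrite $\Theta^{\mathsf P}_{1,n,m,\beta}$ entirely in terms of genus-zero descendant correlators carrying a contracted diagonal pair $\phi_\sigma,\phi^\sigma$; this is exactly equation~\eqref{eqn:-12thetaPixton-1nmbeta}. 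Up to that point your outline is sound and matches the paper.

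The gap is the step you defer as a ``finite check'': the assertion that, after bookkeeping, $\Theta^{\mathsf P}_{1,n,m,\beta}$ becomes a linear combination of the genus-zero constraints $\Theta_{0,n',m',\beta'}$, so that Theorem~\ref{thm:lambda-g-conj-g=0} alone finishes the proof. That is not what happens, and Theorem~\ref{thm:lambda-g-conj-g=0} by itself is not sufficient. In the paper, one differentiates $\Theta_{0,n,m,\beta}\equiv0$ twice along $t_0^\sigma,t_0^\epsilon$, contracts with $\eta^{\sigma\epsilon}$ and sums, obtaining equation~\eqref{eqn:genus-0theta-second-der}; subtracting this from \eqref{eqn:-12thetaPixton-1nmbeta} consumes most but \emph{not all} of the terms, leaving the residual $6\Theta^{\mathsf P}_{1,n,m,\beta}$ of equation~\eqref{eqn:6theta-g=0-bef-trr}, which consists of a three-point term and a quadratic term $\sum_\sigma\langle\langle\cdots\phi_\sigma\rangle\rangle_0\langle\langle\phi^\sigma\tau_s(\phi_\alpha)\rangle\rangle_0$ and is not manifestly (nor, as far as the paper's argument shows, actually) a combination of genus-zero $\Theta$'s. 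Killing this residual requires two further inputs absent from your proposal: the genus-zero topological recursion relation, which fuses the quadratic term into $\langle\langle\tau_m(\phi_\beta)\tau_{-s-1+n-j}(c_1(X)^j\cup\phi^\alpha)\tau_{s+1}(\phi_\alpha)\rangle\rangle_0$, and then the index substitution $s\mapsto -s'+n-j$ combined with the weight constraint that $(\mathcal C^{\,j})^{\alpha\sigma}\neq0$ forces $j+b^\alpha+b^\sigma=1$, which exhibits the remaining sum as equal to minus itself, hence zero. The TRR is an independent geometric relation among $\psi$-classes on $\overline{\mathcal M}_{0,k}$, not a consequence of the constraints $\Theta_0=0$, and the tools you propose in its place (string/dilaton comparison, the additive recursion for elementary symmetric polynomials) will not produce this cancellation. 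So the step you call delicate is in fact the entire mathematical content of the theorem, and as described your argument does not close it.
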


This paper is organized as follows. In section~\ref{sec:virasoro-conj-Hodge}, we recall the basic knowledge in Gromov-Witten theory, Hodge integrals, and $\lambda_{g}$ conjecture. In section~\ref{sec:Virasoro-gw-Hodge}, 
we study the fiber degree-0 Virasoro conjecture for $X\times\mathbb{P}^1$ and prove Theorem~\ref{thm:virasoro-lambda-g-conj-rela} and \ref{thm:lambda-g-conj-g=0}. 
In section~\ref{sec:Givental type formula}, we prove the graph sum formula of semisimple CohFT times $\lambda_{g}$ class and the explicit graph sum formula of total ancestor potential of $\lambda_{g}$ invariants.  In section~\ref{sec:new-universal-gw-pixconj}, we review Pixton's formula for double ramification cycles and give the proof of Theorem~\ref{thm:new-type-uni-constr-descen-gw} and \ref{thm:lambda-g-conj-g=1}.   

\section{Gromov-Witten
invariants and Hodge integrals}\label{sec:virasoro-conj-Hodge}
\subsection{Gromov-Witten
invariants}
Let $X$ be a smooth projective variety of complex dimension $d$, $\{\phi_1,...,\phi_N\}$ be a graded basis of
$H^*(X;\mathbb{C})$, such that $\phi_1$ is the identity element and $\phi_\alpha\in H^{p_\alpha,q_\alpha}(X;\mathbb{C})$. Let $\eta=(\eta_{\alpha\beta})$, $\eta_{\alpha\beta}:=\int_{X}\phi_\alpha\cup\phi_\beta$ be the intersection pairing on $H^*(X;\mathbb{C})$  and $\eta^{-1}=(\eta^{\alpha\beta})$ be the inverse of matrix $\eta$.
We will use $\eta$ and $\eta^{-1}$ to lower and raise indices.

Recall that the big phase space is by definition the product of infinite copies of $H^*(X;\mathbb{C})$, that is $\mathcal{P}:=\prod_{n=0}^{\infty}H^*(X;\mathbb{C})$.  Then we denote the corresponding basis for the $n$-th copy of $H^*(X;\mathbb{C})$
 in $\mathcal{P}$ by $\{\tau_{n}(\phi_\alpha)\}$.   We can think of $\mathcal{P}$ as an infinite dimensional vector space with basis $\{\tau_n(\phi_\alpha):1\leq \alpha\leq N,n\geq0\}$. Let $\{t_n^\alpha:n\geq0,\alpha=1,...,N\}$ be
the corresponding coordinate system on $\mathcal{P}$. For convenience, we identify $\tau_n(\phi_\alpha)$ with the
coordinate vector field $\frac{\partial}{\partial t_n^\alpha}$
on $\mathcal{P}$ for $n\geq0$. If $n<0$, $\tau_n(\phi_\alpha)$ is understood as the 0 vector
field. We also abbreviate $\tau_n(\phi_\alpha)$ as $\phi_\alpha$.  We use $\tau_+$ and $\tau_-$
to denote the operator which shift the level of descendants, i.e.
\[\tau_{\pm}\left(\sum_{n,\alpha}f_{n,\alpha}\tau_{n}(\phi_\alpha)\right):=\sum_{n,\alpha}f_{n,\alpha}\tau_{n\pm 1}(\phi_\alpha)\]
where $f_{n,\alpha}$ are functions on the big phase space.
Instead of coordinates $\{t_n^\alpha\}$, it is very convenient to use the
following shifted coordinates on the big phase space $$\Tilde{t}_n^\alpha=t_n^\alpha-\delta_n^1\delta_\alpha^1.$$

Let $\overline{\mathcal{M}}_{g,n}(X,B)$ be the moduli space of stable maps $f:(C;x_1,...,x_n)\rightarrow X$,  where $(C;x_1,...,x_n)$
is a genus-$g$ nodal curve with $n$ marked points and $f_*([C])=B\in H_2(X;\mathbb{Z})$.
The descendant Gromov-Witten invariants is defined to be
\begin{align}\label{eqn:def-des-gw-inv}
\langle\tau_{k_1}(\phi_{\alpha_1})...\tau_{k_n}(\phi_{\alpha_n})\rangle_{g,n}:=\sum_{B\in H_2(X;\mathbb{Z})}q^{B}\int_{[\overline{\mathcal{M}}_{g,n}(X,B)]_{vir}} \prod_{i=1}^{n}ev_i^*\phi_{\alpha_i}\cup c_1(L_i)^{k_i}   
\end{align}
where $q$ is the Novikov variable, $L_i$ are the tautological line bundles over $\overline{\mathcal{M}}_{g,n}(X,B)$, and $ev_i: \overline{\mathcal{M}}_{g,n}(X,B)\rightarrow X$ are
the evaluation maps. Let  $\mathbf{t}$ be the sets of variables $\{t_n^\alpha,n\geq0, \alpha=1,...,N\}$. 
We
define the generating
function of genus-$g$ Gromov-Witten invariants
\begin{align*}
F_g(\mathbf{t}):=\langle\exp(\sum_{n,\alpha}t_n^\alpha\tau_n(\phi_\alpha))\rangle_{g} 
\end{align*}
and its derivatives
\begin{align*}
\langle\langle\tau_{k_1}(\phi_{\alpha_1})...\tau_{k_n}(\phi_{\alpha_n})\rangle\rangle_{g}
:=\frac{\partial^n}{\partial t_{k_1}^{\alpha_1}...\partial t_{k_n}^{\alpha_n}}F_{g}(\mathbf{t}).
\end{align*}
For our purpose, we also define covariant derivatives of $F_g$ with respect to the trivial
connection on $\mathcal{P}$ by
\[\langle\langle \mathcal{W}_1,...,\mathcal{W}_n\rangle\rangle_g:=\sum_{k_1,\alpha_1,...,k_n,\alpha_n}f^{1}_{k_1,\alpha}...f^n_{k_n,\alpha_n}\frac{\partial^n}{\partial t_{k_1}^{\alpha_1}...\partial t_{k_n}^{\alpha_n}}F_{g}(\mathbf{t})\]
for vector fields $\mathcal{W}_i=\sum_{k,\alpha}f_{k,\alpha}^{i}\tau_{k}(\phi_\alpha)$ on the big phase space. Using this double bracket, we define the quantum product on the big phase space: for any vector fields $\mathcal{W}_1$ and $\mathcal{W}_2$,
\begin{align*}
\mathcal{W}_1\bullet\mathcal{W}_2=\sum_{\alpha}\langle\langle\mathcal{W}_1\mathcal{W}_2\phi_\alpha\rangle\rangle_0\phi^\alpha.
\end{align*}
The total descendant potential is defined as:
\[\mathcal{D}(\mathbf{t}):=\exp(\sum_{g\geq0}\hbar^{2g-2}F_g(\mathbf{t}))
\]
where $\hbar$ is a formal parameter.
\subsection{Hodge integrals}
The
Hodge bundle $\mathbb{E}$ over $\overline{\mathcal{M}}_{g,n}(X,B)$ is the rank $g$ vector bundle with fiber $H^0(C,\omega_C)$ over the domain curve $(C;x_1,...,x_n)$, where $\omega_C$ is the dualizing sheaf of curve $C$. Let $\lambda_{k}$ be
the $k$-th Chern class of $\mathbb{E}$.
The Hodge integral is defined to be 
\begin{align}\label{eqn-def:Hodge-cor}
\langle\tau_{k_1}(\phi_{\alpha_1})...\tau_{k_n}(\phi_{\alpha_n});\prod_{j=1}^{m}\lambda_{l_j}\rangle_{g,n}:=\sum_{B\in H_2(X;\mathbb{Z})}q^{B}\int_{[\overline{\mathcal{M}}_{g,n}(X,B)]_{vir}} \prod_{i=1}^{n}ev_i^*\phi_{\alpha_i}\cup c_1(L_i)^{k_i}   \cup \prod_{j=1}^{m}\lambda_{l_j}.   
\end{align}
By definition, the descendant Gromov-Witten invariants~\eqref{eqn:def-des-gw-inv} can be recovered from Hodge integrals~\eqref{eqn-def:Hodge-cor} by taking $m=0$.

The generating
function of genus-$g$ Hodge integrals is defined to be
\begin{align}\label{eqn:Fg-Hodge-t-s}
F_g^{\mathbb{E}}(\mathbf{t},\mathbf{s})
:=
\Bigg\langle\exp\left(\sum_{n,\alpha}t_n^\alpha\tau_n(\phi_\alpha)\right);\exp\left(\sum_{m\geq0}s_{m}\lambda_{m}\right)\Bigg\rangle_g.
\end{align}
In particular, after the restriction $\mathbf{s}=0$, the generating
function $F_g(\mathbf{t}):=F_{g}^{\mathbb{E}}|_{\mathbf{s}=0}$ becomes
the usual generating function of genus-$g$ pure descendant
Gromov--Witten invariants.
For convenience, we identify  $\lambda_n$ with $\frac{\partial}{\partial s_n}$ and use the following double brackets denote differentiation of $F_{g}^{\mathbb{E}}(\mathbf{t},\mathbf{s})$,
\begin{align*}
\langle\langle\tau_{k_1}(\phi_{\alpha_1})\ldots\tau_{k_n}(\phi_{\alpha_n});\prod_{i=1}^{m}\lambda_{l_i}\rangle\rangle_{g}^{\mathbb{E}} 
=\frac{\partial^{n+m}}{\partial t_{k_1}^{\alpha_1}\dots\partial t_{k_n}^{\alpha_n}\partial s_{l_1}\dots\partial s_{l_m}}F_{g}^{\mathbb{E}}(\mathbf{t},\mathbf{s})
\end{align*}
and
\begin{align*}
\langle\langle\tau_{k_1}(\phi_{\alpha_1})\ldots\tau_{k_n}(\phi_{\alpha_n});\prod_{i=1}^{m}\lambda_{l_i}\rangle\rangle_{g}
=\langle\langle\tau_{k_1}(\phi_{\alpha_1})\ldots\tau_{k_n}(\phi_{\alpha_n});\prod_{i=1}^{m}\lambda_{l_i}\rangle\rangle_{g}^{\mathbb{E}}|_{\mathbf{s}=0}.\end{align*}
\subsection{Virasoro constraints for descendant Gromov-Witten invariants}\label{subsec:virasoro-gw-inva}
In this subsection, we recall the constructions of Virasoro operators by Eguchi, Hori, and
Xiong, modified by Katz (cf. \cite{eguchi1997quantum}, \cite{cox1999mirror}).
For our purpose, we define 
\[b_\alpha:=p_\alpha-\frac{d-1}{2},\,\, b^\alpha:=1-b_\alpha\]
and functions
\[\hat{e}_j(n,\alpha)(x)=e_j(x+b_\alpha,...,x+n+b_\alpha),\quad \check{e}_j(n,\alpha)(x)=e_j(x+b^\alpha,...,x+n+b^\alpha)\]
where $e_j$ is the $j$-th elementary symmetric function. Let $\mathcal{C}=(\mathcal{C}_\alpha^\beta)$ be the matrix of
multiplication by the first Chern class $c_1(X)$ in the ordinary cohomology ring, i.e.
$c_1(X)\cup \phi_\alpha=\sum_{\beta}\mathcal{C}_\alpha^\beta\phi_\beta$.  
The differential operators are defined to be
\begin{align*}
L_n
=&\sum_{j=0}^{n+1}\sum_{r,\alpha}\left(\hat{e}_{n+1-j}(n,\alpha)\right)(r) \tilde{t}_r^\alpha \tau_{r+n-j}(c_1(X)^{j}\cup\phi_\alpha)
\\&+\frac{\hbar^2}{2}\sum_{j=0}^{n+1}\sum_{s,\alpha}(-1)^{s+1}\left(\check{e}_{n+1-j}(n,\alpha)\right)(-s-1)\tau_s(\phi_\alpha)\tau_{-s-1+n-j}(c_1(X)^{j}\cup\phi^{\alpha})
\\&+\frac{1}{2\hbar^2}\sum_{\alpha,\beta}(\mathcal{C}^{n+1})_{\alpha\beta}t_0^\alpha t_0^\beta
+\frac{\delta_{n,0}}{24}
\int_{X}\left(\frac{3-d}{2}c_d(X)-c_1(X)\cup c_{d-1}(X)\right),\quad n\geq-1
\end{align*}
which satisfy Virasoro bracket  relation
\begin{align*}
[L_n,L_m]=(n-m)L_{m+n},\quad n\geq-1.    
\end{align*}
Here $\mathcal{C}^j$
is the $j$-th power of the matrix $\mathcal{C}$, $(\mathcal{C}^j)^{\alpha\beta}$ are entries of the matrix $(\mathcal{C}^{j})\eta^{-1}$ and $(\mathcal{C}^{n+1})_{\alpha\beta}$ are entries of the matrix $(\mathcal{C}^{n+1})\eta$.

Virasoro conjecture states that: 
For any smooth projective variety, 
\[L_n\mathcal{D}(\mathbf{t})=0, \quad n\geq-1.\]
It is well known that for any compact symplectic manifold $L_n\mathcal{D}(\mathbf{t})=0$ for $n=-1$ or $0$.  The first
equation (i.e. for $n=-1$) is the string equation. The second equation (i.e. for $n=0$)
is derived from  divisor equation, dilaton equation and selection rule.
\subsection{$\lambda_{g}$ conjecture}
The 
$\lambda_{g}$
conjecture gives a particularly simple formula for certain integrals on  the moduli space of stable curves with marked points 
$\overline{\mathcal{M}}_{g,n}$. It was first found as a consequence of the Virasoro conjecture for Gromov-Witten invariants of $\mathbb{P}^1$ by  Getzler and  Pandharipande in \cite{getzler1998virasoro}.  Later, it was proven by Faber and  Pandharipande (cf. \cite{faber2003hodge}) using virtual localization in Gromov–Witten theory. It is named after the factor of 
$\lambda_{g}$, the $g$-th Chern class of the Hodge bundle, appearing in its integrand. The other factor is a monomial in the $\psi_i$
, the first Chern classes of the $n$ cotangent line bundles, as in Witten's conjecture.

To be precise, let $a_1,...,a_n$
 be positive integers such that $\sum_{i=1}^{n}a_i=2g-3+n$, then the 
$\lambda_g$ conjecture/theorem  can be stated as follows:
\begin{align}\label{eqn:lambda-g-conjecture-pt}
\int_{\overline{\mathcal{M}}_{g,n}}\psi_1^{a_1}...\psi_{n}^{a_n}\lambda_{g}
=\binom{2g-3+n}{a_1,...,a_n}b_g  \end{align}
where
\begin{align*}
b_g=\begin{cases}
1,\quad g=0\\ 
\int_{\overline{\mathcal{M}}_{g,1}}\psi_1^{2g-2}\lambda_{g},\quad g\geq1  
\end{cases}    
\end{align*}

Take the target variety $X=\text{point}$, then $H^*(X;\mathbb{C})=\mathbb{C}$, with $N=1$ and $\phi_1=\mathbf{1}$ is the identity.
For convenience, we denote  coordinates $t_n^1$ by $t_n$ and the correlation function $\langle\langle\tau_{k_1}(\phi_{1})\ldots\tau_{k_n}(\phi_{1});\prod_{i=1}^{m}\lambda_{l_i}\rangle\rangle_{g}$ by $\langle\langle\tau_{k_1}\ldots\tau_{k_n};\prod_{i=1}^{m}\lambda_{l_i}\rangle\rangle_{g}$. 
It was proved in
\cite{getzler1998virasoro} that $\lambda_{g}$ formula~\eqref{eqn:lambda-g-conjecture-pt} is equivalent to  a sequence of equations:
\begin{align*}
\sum_{r\geq1}\frac{\Gamma(r+n+1)}{\Gamma(r)} (t_r-\delta_{r}^{1})\langle\langle\tau_{r+n}\tau_{m};\lambda_{g}\rangle\rangle_{g}+\frac{\Gamma(m+n+2)}{\Gamma(m+1)}\langle\langle\tau_{m+n};\lambda_{g}\rangle\rangle_{g}=0,\quad n\geq1
\end{align*}
where $\Gamma(x)$ is Gamma function. Later, Jiang and Tseng proved a generalized $\lambda_{g}$ theorem of orbifold version, which involves descendant cyclic Hurwitz
Hodge integrals over stacky point (cf. \cite{jiang2010virasoro}).
In this paper, we would like to generalize this sequence of constraints to arbitrary smooth projective varieties, which we call $\lambda_g$ constraints/conjecture.

\section{Virasoro constraints for  Gromov-Witten invariants of product space $X\times\mathbb{P}^1$}\label{sec:Virasoro-gw-Hodge}
Denote the hyperplane class of $\mathbb{P}^1$ by $H\in H^2(\mathbb{P}^1;\mathbb{C})$.
Since the cohomology ring  $H^*(Y;\mathbb{C})\cong H^*(X;\mathbb{C})\otimes_{\mathbb{C}} H^*(\mathbb{P}^1;\mathbb{C})$, there is a canonical basis $\{\tau_n(\phi_\alpha\otimes1), \tau_n(\phi_\alpha\otimes H):1\leq\alpha\leq N, n\geq0\}$ of the big phase space $\prod_{n=0}^{\infty}H^*(Y;\mathbb{C})$. Denote the flat coordinates of the big phase space $\prod_{n=0}^{\infty}H^*(Y;\mathbb{C})$ by $\{t_n^\alpha, u_n^\alpha: 1\leq \alpha\leq N,n\geq0\}$ respect to the canonical basis.  
For convenience, we introduce new notations: $\phi_{(\alpha,0)}=\phi_\alpha\otimes1$, $\phi_{(\alpha,1)}=\phi_\alpha\otimes H$ and its dual basis  
$\phi^{(\alpha,0)}=\phi^\alpha\otimes H$, $\phi^{(\alpha,1)}=\phi^\alpha\otimes1$. We also introduce new notations for the flat coordinates: $t_n^{(\alpha,0)}=t_n^\alpha$ and $t_{n}^{(\alpha,1)}=u_n^\alpha$. 

Define a product space $Y=X\times\mathbb{P}^1$, which can be seen as a trivial $\mathbb{P}^1$ bundle over $X$. We first write down the explicit Virasoro operators for Gromov-Witten invariants of $Y$. Then we establish the relations between Gromov-Witten invariants of $Y$ and Hodge integrals of $X$.

\subsection{Virasoro operators for Gromov-Witten invariants of $X\times\mathbb{P}^1$}
\begin{lemma}
\label{lem:c1-Y-cd-Y}
In cohomology ring $H^*(Y;\mathbb{C})=H^*(X;\mathbb{C})\otimes_{\mathbb{C}} H^*(\mathbb{P}^1;\mathbb{C})$
\begin{align*}
(i)& \quad c_1(Y)^j=c_1(X)^j\otimes1+ j\cdot c_1(X)^{j-1}\otimes(2H), \quad j\geq0 \\
(ii)&\quad c_{d+1}(Y)=c_{d}(X)\otimes(2H)\\
(iii)&\quad c_d(Y)
=c_{d}(X)\otimes1+c_{d-1}(X)\otimes (2H). 
\end{align*}   
\end{lemma}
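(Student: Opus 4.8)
The plan is to exploit the product structure of $Y=X\times\mathbb{P}^1$ directly at the level of the tangent bundle. Writing $\pi_X\colon Y\to X$ and $\pi\colon Y\to\mathbb{P}^1$ for the two projections, the tangent bundle of a product splits canonically as $TY=\pi_X^*TX\oplus\pi^*T\mathbb{P}^1$. By the Whitney sum formula the total Chern class is therefore multiplicative,
\begin{align*}
c(TY)=\pi_X^*c(TX)\cdot\pi^*c(T\mathbb{P}^1).
\end{align*}
Under the K\"unneth isomorphism $H^*(Y;\mathbb{C})\cong H^*(X;\mathbb{C})\otimes_{\mathbb{C}}H^*(\mathbb{P}^1;\mathbb{C})$ I would identify $\pi_X^*a$ with $a\otimes1$ and $\pi^*b$ with $1\otimes b$, so that the two factors above become $\sum_{i\geq0}c_i(X)\otimes1$ and $1\otimes c(T\mathbb{P}^1)$.

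Next I would compute $c(T\mathbb{P}^1)$. Since $T\mathbb{P}^1\cong\mathcal{O}_{\mathbb{P}^1}(2)$, we have $c(T\mathbb{P}^1)=1+2H$, and the relation $H^2=0$ in $H^*(\mathbb{P}^1;\mathbb{C})$ gives $(2H)^2=0$. Substituting into the multiplicative formula and collecting terms by total degree, the product yields the single master identity
\begin{align*}
c_k(Y)=c_k(X)\otimes1+c_{k-1}(X)\otimes(2H),\quad k\geq0,
\end{align*}
since $2H$ contributes complex codimension one. Taking $k=d$ gives (iii) at once, while $k=d+1$ gives (ii) after using $c_{d+1}(X)=0$, which holds because $\dim_{\mathbb{C}}X=d$ kills the leading term.

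For (i), I would specialize the master identity to $k=1$, giving $c_1(Y)=c_1(X)\otimes1+1\otimes(2H)$, and then expand the $j$-th power by the binomial theorem. Because $(1\otimes2H)^2=0$, only the $k=0$ and $k=1$ binomial terms survive, producing exactly $c_1(X)^j\otimes1+j\,c_1(X)^{j-1}\otimes(2H)$; the case $j=0$ reduces to the trivial identity $1\otimes1$. There is no serious obstacle here: the only points requiring care are the bookkeeping of the K\"unneth identification and checking that graded-commutativity introduces no sign, which is immediate since $2H$ has even degree and is paired against $1$, so the tensor factors multiply without correction.
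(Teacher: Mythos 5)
Your proof is correct and follows essentially the same route as the paper: both arguments rest on the multiplicativity of the total Chern class for the product $Y=X\times\mathbb{P}^1$ (the paper writes $c(Y)=c(X)\otimes c(\mathbb{P}^1)$ directly, which is exactly your Whitney-sum step), the identity $c_1(Y)=c_1(X)\otimes1+1\otimes(2H)$ with $(2H)^2=0$, and reading off graded pieces. Your only addition is spelling out the tangent-bundle splitting and the binomial expansion, which the paper leaves implicit.
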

\begin{proof}
Note that
\begin{align*}
c_1(Y)=c_1(X)\otimes1+1\otimes(2H),    
\end{align*}
then 
\begin{align*}
c_1(Y)^j=c_1(X)^j\otimes1+ j\cdot c_1(X)^{j-1}\otimes(2H). \end{align*}
Denote $c(X)$ and $c(\mathbb{P}^1)$ to be the total Chern class of $X$ and $\mathbb{P}^1$, then we have
\begin{align*}
c_{d+1}(Y)=[c(X)\otimes c(\mathbb{P}^1)]_{\deg=d+1}=c_{d}(X)\otimes(2H) 
\end{align*}
and
\begin{align*}
c_d(Y)
=[c(X)\otimes c(\mathbb{P}^1)]_{\deg=d}
=c_{d}(X)\otimes1+c_{d-1}(X)\otimes (2H).
\end{align*} 
This lemma is proved.
\end{proof}

\begin{lemma}
\begin{align*}
(i)&\quad b_{(\alpha,0)}^Y
=b_\alpha^X-\frac{1}{2},\quad   b_{(\alpha,1)}^Y
=b_\alpha^X+\frac{1}{2}\\
(ii)&\quad b_Y^{(\alpha,0)}=b^\alpha_X+\frac{1}{2},\quad  b_Y^{(\alpha,1)}=b_X^\alpha-\frac{1}{2}
\end{align*}    
\end{lemma}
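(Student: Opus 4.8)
The plan is to reduce the whole statement to bookkeeping of holomorphic Hodge degrees under the Künneth decomposition, exploiting the single fact that $\dim_{\mathbb{C}} Y = d+1$. Thus the defining shift for $Y$ is
\[
b^Y_\bullet = p_\bullet - \frac{(d+1)-1}{2} = p_\bullet - \frac{d}{2},
\]
whose denominator is larger by one half than the shift $b^X_\alpha = p_\alpha - \frac{d-1}{2}$ for $X$; this discrepancy is exactly what will produce the uniform $\pm\frac{1}{2}$ corrections. The only inputs I would use are the definition $b_\alpha = p_\alpha - \frac{d-1}{2}$, the relation $b^\alpha = 1-b_\alpha$, and the Hodge bidegrees of the product classes.

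First I would record the bidegrees of the canonical basis. Since $\mathbf{1}\in H^{0,0}(\mathbb{P}^1;\mathbb{C})$ and $H\in H^{1,1}(\mathbb{P}^1;\mathbb{C})$, for $\phi_\alpha\in H^{p_\alpha,q_\alpha}(X;\mathbb{C})$ the Künneth products satisfy $\phi_{(\alpha,0)}=\phi_\alpha\otimes\mathbf{1}\in H^{p_\alpha,q_\alpha}(Y;\mathbb{C})$ and $\phi_{(\alpha,1)}=\phi_\alpha\otimes H\in H^{p_\alpha+1,q_\alpha+1}(Y;\mathbb{C})$, so the holomorphic degrees are $p_{(\alpha,0)}=p_\alpha$ and $p_{(\alpha,1)}=p_\alpha+1$. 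Substituting into $b^Y_\bullet=p_\bullet-\frac{d}{2}$ gives
\[
b^Y_{(\alpha,0)}=p_\alpha-\tfrac{d}{2}=b^X_\alpha-\tfrac12,\qquad b^Y_{(\alpha,1)}=p_\alpha+1-\tfrac{d}{2}=b^X_\alpha+\tfrac12,
\]
which is part (i).

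For part (ii) I would apply $b^\bullet=1-b_\bullet$ on $Y$ directly to the two identities just obtained, giving $b_Y^{(\alpha,0)}=1-b^Y_{(\alpha,0)}=1-(b^X_\alpha-\frac12)=b_X^\alpha+\frac12$ and $b_Y^{(\alpha,1)}=1-(b^X_\alpha+\frac12)=b_X^\alpha-\frac12$. As a consistency check I would independently recompute these from the dual classes $\phi^{(\alpha,0)}=\phi^\alpha\otimes H$ and $\phi^{(\alpha,1)}=\phi^\alpha\otimes\mathbf{1}$: since the Poincar\'e pairing pairs $H^{p_\alpha,q_\alpha}(X)$ with $H^{d-p_\alpha,d-q_\alpha}(X)$, the dual class $\phi^\alpha$ has holomorphic degree $d-p_\alpha$, so $\phi^{(\alpha,0)}$ and $\phi^{(\alpha,1)}$ have holomorphic degrees $d+1-p_\alpha$ and $d-p_\alpha$; feeding these into $b^Y_\bullet=p_\bullet-\frac{d}{2}$ reproduces the same two values.

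There is no genuine obstacle here, as the statement is a direct unwinding of the definitions. The only points requiring care are keeping the dimension shift consistent, since the passage from $X$ to $Y=X\times\mathbb{P}^1$ replaces $d$ by $d+1$, and confirming that the two a priori different meanings of $b^\alpha$, namely $1-b_\alpha$ versus the $b$-value of the Poincar\'e-dual class, coincide; the consistency check above verifies this agreement.
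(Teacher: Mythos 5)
Your proposal is correct and follows essentially the same route as the paper: part (i) is the identical computation (the Hodge degrees $p_{(\alpha,0)}=p_\alpha$, $p_{(\alpha,1)}=p_\alpha+1$ fed into $b^Y_\bullet = p_\bullet - \frac{\dim Y - 1}{2} = p_\bullet - \frac{d}{2}$). The paper in fact stops after part (i), leaving part (ii) implicit; your one-line deduction via $b^\bullet = 1-b_\bullet$, together with the consistency check against the Hodge degrees of the dual classes $\phi^{(\alpha,0)}=\phi^\alpha\otimes H$ and $\phi^{(\alpha,1)}=\phi^\alpha\otimes 1$, correctly supplies what the paper omits.
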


\begin{proof}
By definition,
\begin{align*}
b_{(\alpha,0)}^Y=p_{(\alpha,0)}-\frac{\dim(Y)-1}{2}  =p_\alpha-\frac{\dim(X)}{2}
=b_\alpha^X-\frac{1}{2},
\end{align*}
and
\begin{align*}
b_{(\alpha,1)}^Y=p_{(\alpha,1)}-\frac{\dim(Y)-1}{2}  =p_\alpha+1-\frac{\dim(X)}{2}
=b_\alpha^X+\frac{1}{2}.
\end{align*}    

\end{proof}

\begin{proposition}\label{prop:Ln-expression}For $n\geq-1$,
the Virasoro operators for Gromov-Witten invariants of $Y$ has the expression
\begin{align}\label{eqn:prop-Ln-expression}
&L_n
\nonumber\\=&\sum_{j=0}^{n+1}\sum_{j,\alpha}e_{n+1-j}(r+b_{\alpha}-\frac{1}{2},...,r+n+b_{\alpha}-\frac{1}{2})\tilde{t}_r^{\alpha} \tau_{r+n-j}((c_1(X)^{j}\cup\phi_{\alpha})\otimes1)
\nonumber\\&+\sum_{j=0}^{n+1}\sum_{r,\alpha}(2j)\cdot e_{n+1-j}(r+b_\alpha-\frac{1}{2},...,r+n+b_\alpha-\frac{1}{2})\tilde{t}_r^\alpha\tau_{r+n-j}((c_1(X)^{j-1}\cup\phi_\alpha)\otimes H)
\nonumber\\&+\sum_{j=0}^{n+1}\sum_{r,\alpha}e_{n+1-j}(r+b_\alpha+\frac{1}{2},...,r+n+b_\alpha+\frac{1}{2})u_{r}^{\alpha}\tau_{r+n-j}((c_1(X)^j\cup\phi_\alpha)\otimes H)
\nonumber\\&+\hbar^2\sum_{j=0}^{n+1}\sum_{s,\alpha}(-1)^{-s-1}e_{n+1-j}(-s-\frac{3}{2}+b^\alpha,...,-s-\frac{3}{2}+n+b^\alpha)
\nonumber\\&\hspace{60pt}\cdot\tau_s(\phi_{\alpha}\otimes H)\tau_{-s-1+n-j}((c_1(X)^{j}\cup\phi^{\alpha})\otimes1)
\nonumber\\&+\frac{\hbar^2}{2}\sum_{j=0}^{n+1}\sum_{s,\alpha}(-1)^{-s-1}(2j)\cdot e_{n+1-j}(-s-\frac{3}{2}+b^{\alpha},...,-s-\frac{3}{2}+n+b^{\alpha})
\nonumber\\&\hspace{60pt}\cdot\tau_s(\phi_{\alpha}\otimes H)\tau_{-s-1+n-j}((c_1(X)^{j-1}\cup\phi^{\alpha})\otimes H) 
\nonumber\\&+\frac{n+1}{\hbar^2}\sum_{\alpha,\beta}t_0^{\alpha} t_0^{\beta}\int_{X}c_1(X)^{n}\cup\phi_\alpha\cup\phi_\beta
+\frac{1}{\hbar^2}\sum_{\alpha,\beta}t_0^{\alpha} u_0^{\beta}\int_{X}c_1(X)^{n+1}\cup\phi_\alpha\cup\phi_\beta
\nonumber\\&+\frac{\delta_{n,0}}{24}\int_{X}\left((-d)c_d(X)-2c_1(X)\cup c_{d-1}(X)\right)
\end{align}    where $c_1(X)$ is the first Chern class of $X$, $b_\alpha=b_\alpha^{X}$ and $b^\alpha=b^\alpha_{X}$. 
\end{proposition}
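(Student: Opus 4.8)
The plan is to obtain \eqref{eqn:prop-Ln-expression} by substituting the geometry of $Y=X\times\mathbb{P}^1$ directly into the general expression for $L_n$ recorded in Subsection~\ref{subsec:virasoro-gw-inva}, applied to the variety $Y$ of complex dimension $d+1$. Three inputs are needed: the decomposition of the basis of $H^*(Y;\mathbb{C})$ into the two sectors $\phi_{(\alpha,0)}=\phi_\alpha\otimes 1$ and $\phi_{(\alpha,1)}=\phi_\alpha\otimes H$, together with the dual basis $\phi^{(\alpha,0)}=\phi^\alpha\otimes H$, $\phi^{(\alpha,1)}=\phi^\alpha\otimes 1$; the conformal weights $b^Y_{(\alpha,0)}=b^X_\alpha-\tfrac12$, $b^Y_{(\alpha,1)}=b^X_\alpha+\tfrac12$ and their duals from the preceding lemma; and the Chern-class identities of Lemma~\ref{lem:c1-Y-cd-Y}. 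I will evaluate each of the four pieces of $L_n$ (the $\tilde t$-linear term, the $\hbar^2$-quadratic term, the $\hbar^{-2}$-classical term, and the constant $\delta_{n,0}$-term) in turn, repeatedly using that $H\cup H=0$ in $H^*(\mathbb{P}^1;\mathbb{C})$ and $\int_{\mathbb{P}^1}H=1$.

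For the linear term $\sum_{j}\sum_{r,A}\hat e_{n+1-j}(n,A)(r)\,\tilde t_r^A\,\tau_{r+n-j}(c_1(Y)^j\cup\phi_A)$ I split the sum over the $Y$-index $A$ into the two sectors. In the $(\alpha,0)$-sector, $b^Y_{(\alpha,0)}=b_\alpha-\tfrac12$ produces the offset $b_\alpha-\tfrac12$ in the elementary symmetric functions, while Lemma~\ref{lem:c1-Y-cd-Y}(i) gives $c_1(Y)^j\cup(\phi_\alpha\otimes 1)=(c_1(X)^j\cup\phi_\alpha)\otimes 1+2j\,(c_1(X)^{j-1}\cup\phi_\alpha)\otimes H$; since $\tilde t_r^{(\alpha,0)}=\tilde t_r^\alpha$, these two summands are exactly the first two lines of \eqref{eqn:prop-Ln-expression}. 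In the $(\alpha,1)$-sector the cross term is killed by $H\cup H=0$, so $c_1(Y)^j\cup(\phi_\alpha\otimes H)=(c_1(X)^j\cup\phi_\alpha)\otimes H$, the offset is $b_\alpha+\tfrac12$, and $\tilde t_r^{(\alpha,1)}=u_r^\alpha$ (there is no shift, as $\phi_{(\alpha,1)}$ is never the identity); this is the third line.

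The quadratic term is the delicate step. Splitting $\tfrac{\hbar^2}{2}\sum_{s,A}(\cdots)\tau_s(\phi_A)\tau_{-s-1+n-j}(c_1(Y)^j\cup\phi^A)$ over the two sectors, the $(\alpha,0)$-sector contributes (via $\phi^{(\alpha,0)}=\phi^\alpha\otimes H$, so $c_1(Y)^j\cup\phi^{(\alpha,0)}=(c_1(X)^j\cup\phi^\alpha)\otimes H$) a term pairing $\tau_s(\phi_\alpha\otimes 1)$ with $\tau_{-s-1+n-j}((c_1(X)^j\cup\phi^\alpha)\otimes H)$, while the $(\alpha,1)$-sector (via $\phi^{(\alpha,1)}=\phi^\alpha\otimes 1$ and Lemma~\ref{lem:c1-Y-cd-Y}(i)) splits further into a piece pairing $\tau_s(\phi_\alpha\otimes H)$ with $\tau_{-s-1+n-j}((c_1(X)^j\cup\phi^\alpha)\otimes 1)$ and a genuinely new $2j$-piece involving $\tau_{-s-1+n-j}((c_1(X)^{j-1}\cup\phi^\alpha)\otimes H)$. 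The $2j$-piece has no counterpart in the $(\alpha,0)$-sector and survives with coefficient $\tfrac{\hbar^2}{2}$, giving the fifth and sixth lines. The two $\otimes 1$-mixed pieces, on the other hand, must be identified: using that the quadratic operator is symmetric in its two vector-field factors, that cup product with $c_1(X)$ is self-adjoint for the Poincaré pairing, and the reflection $s\mapsto -s-1+n-j$ of the descendant index, the $(\alpha,0)$ contribution is carried onto the $(\alpha,1)$ one, and the two combine into a single term with coefficient $\hbar^2$, namely the fourth line. The main obstacle here is precisely this bookkeeping: one must check that the half-integer shift $b_\alpha\mapsto b_\alpha\pm\tfrac12$ coming from the $\mathbb{P}^1$-factor, the degree shift incurred when $c_1(X)^j$ is moved across the pairing, and the reflection of $s$ are mutually compatible, so that the two elementary-symmetric-function weights and the sign $(-1)^{s+1}$ agree and the contributions genuinely double.

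Finally, the classical and constant terms are routine. For $\tfrac{1}{2\hbar^2}\sum_{A,B}(\mathcal{C}_Y^{n+1})_{AB}t_0^At_0^B$, one computes the entries $(\mathcal{C}_Y^{n+1})_{AB}=\int_Y c_1(Y)^{n+1}\cup\phi_A\cup\phi_B$ with Lemma~\ref{lem:c1-Y-cd-Y}(i); only the $(\alpha,0)(\beta,0)$-entry, equal to $2(n+1)\int_X c_1(X)^n\cup\phi_\alpha\cup\phi_\beta$ and giving the $t_0^\alpha t_0^\beta$ term once the $\tfrac12$ cancels the factor $2$, and the mixed $(\alpha,0)(\beta,1)$-entry, equal to $\int_X c_1(X)^{n+1}\cup\phi_\alpha\cup\phi_\beta$ and doubled by symmetry into the $t_0^\alpha u_0^\beta$ term, survive; the $(\alpha,1)(\beta,1)$-entry vanishes by $H\cup H=0$. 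For the constant, substituting $\dim Y=d+1$ together with Lemma~\ref{lem:c1-Y-cd-Y}(ii),(iii) into $\tfrac{\delta_{n,0}}{24}\int_Y(\tfrac{3-\dim Y}{2}c_{\dim Y}(Y)-c_1(Y)\cup c_{\dim Y-1}(Y))$ and integrating over the $\mathbb{P}^1$-factor reduces everything to $\tfrac{\delta_{n,0}}{24}\int_X((-d)c_d(X)-2c_1(X)\cup c_{d-1}(X))$, the last line.
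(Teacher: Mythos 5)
Your proposal is correct and follows essentially the same route as the paper's proof: substituting the sector decomposition $\phi_{(\alpha,0)},\phi_{(\alpha,1)}$ and Lemma~\ref{lem:c1-Y-cd-Y} into the general $L_n$ for $Y$, and then merging the two mixed quadratic pieces into the single $\hbar^2$-term of the fourth line via the index reflection $s\mapsto -s-1+n-j$. Your appeal to self-adjointness of cup product with $c_1(X)$ is exactly the paper's use of the fact that $(\mathcal{C}^j)^{\alpha\beta}\neq 0$ forces $j+b^\alpha+b^\beta=1$, which makes the elementary-symmetric-function weights and signs match under that reflection.
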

\begin{proof}
By definition of 
Virasoro operators in section~\ref{subsec:virasoro-gw-inva}, we have
\begin{align*}
&L_n
\\=&\sum_{j=0}^{n+1}\sum_{r,\alpha}e_{n+1-j}(r+b_{(\alpha,0)},...,r+n+b_{(\alpha,0)})\tilde{t}_r^{\alpha} \tau_{r+n-j}(c_1(Y)^{j}\cup\phi_{(\alpha,0)})
\\&+\sum_{j=0}^{n+1}\sum_{r,\alpha}e_{n+1-j}(r+b_{(\alpha,1)},...,r+n+b_{(\alpha,1)})u_r^{\alpha} \tau_{r+n-j}(c_1(Y)^{j}\cup\phi_{(\alpha,1)})
\\&+\frac{\hbar^2}{2}\sum_{j=0}^{n+1}\sum_{s,\alpha}(-1)^{-s-1}e_{n+1-j}(-s-1+b^{(\alpha,0)},...,-s-1+n+b^{(\alpha,0)})
\\&\hspace{80pt}\cdot\tau_s(\phi_{\alpha,0})\tau_{-s-1+n-j}(c_1(Y)^{j}\cup\phi^{(\alpha,0)})
\\&+\frac{\hbar^2}{2}\sum_{j=0}^{n+1}\sum_{s,\alpha}(-1)^{-s-1}e_{n+1-j}(-s-1+b^{(\alpha,1)},...,-s-1+n+b^{(\alpha,1)})
\\&\hspace{80pt}\cdot\tau_s(\phi_{\alpha,1})\tau_{-s-1+n-j}(c_1(Y)^{j}\cup\phi^{(\alpha,1)})
\\&+\frac{1}{2\hbar^2}\sum_{\alpha,\beta,i,j}\left(\int_{Y}c_1(Y)^{n+1}\cup\phi_{(\alpha,i)}\cup\phi_{(\beta,j)}\right)t_0^{(\alpha,i)} t_0^{(\beta,j)}
\\&+\frac{\delta_{n,0}}{24}\int_{Y}\left(\frac{3-(d+1)}{2}c_{d+1}(Y)-c_1(Y)\cup c_{d}(Y)\right).
\end{align*}
Then we compute each terms explicitly by Lemma~\ref{lem:c1-Y-cd-Y}. In the first term 
\begin{align*}
&c_1(Y)^j\cup\phi_{(\alpha,0)} = (c_1(X)^j\otimes1+ j\cdot c_1(X)^{j-1}\otimes(2H))\cup(\phi_\alpha\otimes1)
\nonumber\\=&(c_1(X)^j\cup\phi_\alpha)\otimes1+ (2j)\cdot (c_1(X)^{j-1}\cup\phi_\alpha)\otimes H.
\end{align*}
In the second term,
\begin{align*}
&c_1(Y)^j\cup\phi_{(\alpha,1)} = (c_1(X)^j\otimes1+ j\cdot c_1(X)^{j-1}\otimes(2H))\cup(\phi_\alpha\otimes H)
\nonumber\\=&(c_1(X)^j\cup\phi_\alpha)\otimes H.
\end{align*}
For the third term, 
\begin{align*}
&c_1(Y)^j\cup\phi^{(\alpha,0)}
=(c_1(X)^j\otimes1+ j\cdot c_1(X)^{j-1}\otimes(2H))\cup(\phi^\alpha\otimes H)
\\=&(c_1(X)^j\cup\phi^\alpha)\otimes H.
\end{align*}
For the fourth term,
\begin{align*}
&c_1(Y)^j\cup\phi^{(\alpha,1)}
=(c_1(X)^j\otimes1+ j\cdot c_1(X)^{j-1}\otimes(2H))\cup(\phi^\alpha\otimes 1)
\\=&(c_1(X)^j\cup\phi^\alpha)\otimes1+(2j) (c_1(X)^{j-1}\cup\phi^\alpha)\otimes H.
\end{align*}
For the fifth term,  
\begin{align*}
&\sum_{\alpha,\beta,i,j}\left(\int_{Y}c_1(Y)^{n+1}\cup\phi_{(\alpha,i)}\cup\phi_{(\beta,j)}\right)t_0^{(\alpha,i)} t_0^{(\beta,j)}
\\=&\sum_{\alpha,\beta}\int_{X\times\mathbb{P}^1}c_1(Y)^{n+1}\cup\phi_{(\alpha,0)}\cup\phi_{(\beta,0)}t_0^{(\alpha,0)}t_0^{(\beta,0)}
+2\sum_{\alpha,\beta}\int_{X\times\mathbb{P}^1}c_1(Y)^{n+1}\cup\phi_{(\alpha,0)}\cup\phi_{(\beta,1)}t_0^{(\alpha,0)}t_0^{(\beta,1)}
\\&+\sum_{\alpha,\beta}\int_{X\times\mathbb{P}^1}c_1(Y)^{n+1}\cup\phi_{(\alpha,1)}\cup\phi_{(\beta,1)}t_0^{(\alpha,1)}t_0^{(\beta,1)}
\\=&2(n+1) \sum_{\alpha,\beta} \left(\int_{X}c_1(X)^{n}\cup\phi_{\alpha}\cup\phi_{\beta}\right)t_0^{\alpha}t_0^{\beta}
+2\sum_{\alpha,\beta}\left(\int_{X}c_1(X)^{n+1}\cup\phi_{\alpha}\cup\phi_{\beta}\right)t_0^{\alpha}u_0^{\beta}.
\end{align*}
For the last constant term, by Lemma~\ref{lem:c1-Y-cd-Y}, we have
\begin{align*}
\int_{Y}c_1(Y)\cup c_{d}(Y) =2\int_{X}c_1(X)\cup c_{d-1}(X)+2\int_{X}c_d(X),   
\end{align*}
and
\begin{align*}
\int_{Y}c_{d+1}(Y)
=2\int_{X}c_d(X).  
\end{align*}
So we get
\begin{align*}
\int_{Y}\left(\frac{3-(d+1)}{2}c_{d+1}(Y)-c_1(Y)\cup c_{d}(Y)\right)
=\int_{X}\left((-d)c_d(X)-2c_1(X)\cup c_{d-1}(X)\right).
\end{align*}
Therefore, after simplification, \begin{align*}
&L_n
\\=&\sum_{j=0}^{n+1}\sum_{j,\alpha}e_{n+1-j}(r+b_{\alpha}-\frac{1}{2},...,r+n+b_{\alpha}-\frac{1}{2})\tilde{t}_r^{\alpha} \tau_{r+n-j}((c_1(X)^{j}\cup\phi_{\alpha})\otimes1)
\\&+\sum_{j=0}^{n+1}\sum_{r,\alpha}(2j)\cdot e_{n+1-j}(r+b_\alpha-\frac{1}{2},...,r+n+b_\alpha-\frac{1}{2})\tilde{t}_r^\alpha\tau_{r+n-j}((c_1(X)^{j-1}\cup\phi_\alpha)\otimes H)
\\&+\sum_{j=0}^{n+1}\sum_{r,\alpha}e_{n+1-j}(r+b_\alpha+\frac{1}{2},...,r+n+b_\alpha+\frac{1}{2})u_{r}^{\alpha}\tau_{r+n-j}((c_1(X)^j\cup\phi_\alpha)\otimes H)
\\&+\frac{\hbar^2}{2}\sum_{j=0}^{n+1}\sum_{s,\alpha}(-1)^{-s-1}e_{n+1-j}(-s-\frac{1}{2}+b^\alpha,...,-s-\frac{1}{2}+n+b^\alpha)
\\&\hspace{80pt}\cdot\tau_s(\phi_{\alpha}\otimes1)\tau_{-s-1+n-j}((c_1(X)^{j}\cup\phi^{\alpha})\otimes H)
\\&+\frac{\hbar^2}{2}\sum_{j=0}^{n+1}\sum_{s,\alpha}(-1)^{-s-1}e_{n+1-j}(-s-\frac{3}{2}+b^\alpha,...,-s-\frac{3}{2}+n+b^\alpha)\\&\hspace{60pt}\cdot\tau_s(\phi_{\alpha}\otimes H)\tau_{-s-1+n-j}((c_1(X)^{j}\cup\phi^{\alpha})\otimes1)
\\&+\frac{\hbar^2}{2}\sum_{j=0}^{n+1}\sum_{s,\alpha}(-1)^{-s-1}(2j)\cdot e_{n+1-j}(-s-\frac{3}{2}+b^{\alpha},...,-s-\frac{3}{2}+n+b^{\alpha})\\&\hspace{60pt}\cdot\tau_s(\phi_{\alpha}\otimes H)\tau_{-s-1+n-j}((c_1(X)^{j-1}\cup\phi^{\alpha})\otimes H) 
\\&+\frac{n+1}{\hbar^2}\sum_{\alpha,\beta}t_0^{\alpha} t_0^{\beta}\int_{X}c_1(X)^{n}\cup\phi_\alpha\cup\phi_\beta
+\frac{1}{\hbar^2}\sum_{\alpha,\beta}t_0^{\alpha}u_0^{\beta}
\int_{X}c_1(X)^{n+1}\cup\phi_\alpha\cup\phi_\beta \\&+\frac{\delta_{n,0}}{24}\int_{X}\left((-d)c_d(X)-2c_1(X)\cup c_{d-1}(X)\right)
\end{align*}    where $b_\alpha=b_\alpha^{X}$ and $b^\alpha=b_X^\alpha$. In fact, the fourth term and fifth term are equal to each other. To see this, taking transformation of index $s=-s'-1+n-j$ and using the basic fact that $(\mathcal{C}^j)^{\alpha\beta}\neq0$ implies $j+b^\alpha+b^\beta=1$,  it is direct to compute the fourth term
\begin{align*}
&\sum_{s,\alpha}(-1)^{-s-1}e_{n+1-j}(-s-\frac{1}{2}+b^\alpha,...,-s-\frac{1}{2}+n+b^\alpha)
\\&\hspace{40pt}\cdot\tau_{s}(\phi_\alpha\otimes1)\tau_{-s-1+n-j}((c_1(X)^j\cup\phi^\alpha)\otimes H)
\\=&\sum_{s',\alpha,\beta}(-1)^{s'-n+j}e_{n+1-j}(s'-n+\frac{3}{2}-b^\beta,...,s'+\frac{3}{2}-b^\beta)
\\&\hspace{40pt}\cdot\tau_{-s'-1+n-j}(\phi_\alpha\otimes1)\tau_{s'}(\phi_\beta\otimes H)(\mathcal{C}^{j})^{\alpha\beta}
\\=&\sum_{s',\beta}(-1)^{s'+1}e_{n+1-j}(-s'+n-\frac{3}{2}+b^\beta,...,-s'-\frac{3}{2}+b^\beta)
\\&\hspace{40pt}\cdot\tau_{-s'-1+n-j}((c_1(X)^j\cup\phi^\beta)\otimes1)\tau_{s'}(\phi_\beta\otimes H).
\end{align*}
As second order derivative operators, this match with the fifth term. After combining the fourth term and fifth term, we get equation~\eqref{eqn:prop-Ln-expression}.
   
\end{proof}
\subsection{The fiber degree 0 Gromov-Witten invariants of $Y$}
The following proposition plays a crucial role in studying the relations between Hodge integrals of $X$ and Gromov-Witten invariants of $Y$. 
\begin{proposition}\label{prop:deg-0-vir-fun-class}
Let $X=X_1\times X_2$ be a product of smooth projective varieties. Let
 $(B,0)\in H_2(X;\mathbb{Z})$ be the image of a class 
$B\in H_2(X_1;\mathbb{Z})$ under the fiber map $X_1\rightarrow X$. Then
$\overline{\mathcal{M}}_{g,n}(X_1\times X_2, (B,0))=\overline{\mathcal{M}}_{g,n}(X_1, B)\times X_2$ and the corresponding  virtual fundamental classes are related
 by
 \begin{align*}
&[\overline{\mathcal{M}}_{g,n}(X_1\times X_2,(B,0))]_{vir}=  \left([\overline{\mathcal{M}}_{g,n}(X_1, B)]_{vir}\times[X_2]\right)\cap e(\mathbb{E}^{*}\boxtimes TX_2).
\end{align*}
\end{proposition}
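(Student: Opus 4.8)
The plan is to treat the two assertions separately: first the isomorphism of moduli stacks, then the comparison of virtual fundamental classes via the splitting of the obstruction theory that it induces.

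For the identification of moduli spaces, I would start from the fact that a stable map $f\colon (C;x_1,\dots,x_n)\to X_1\times X_2$ is the same datum as a pair $(f_1,f_2)$ with $f_i\colon C\to X_i$, and that the class condition $f_*[C]=(B,0)$ forces $(f_2)_*[C]=0$. Since $X_2$ is projective, any nonconstant morphism from an irreducible component of $C$ to $X_2$ has image of strictly positive degree against an ample class; as these contributions are nonnegative and sum to zero, every component must be contracted by $f_2$, so $f_2$ is a constant map to some point $p\in X_2$. Because $f_2$ is globally constant, a component of $C$ is contracted by $f$ if and only if it is contracted by $f_1$, so the stability of $(f_1,f_2)$ is equivalent to the stability of $f_1$. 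This yields a morphism $\overline{\mathcal{M}}_{g,n}(X_1,B)\times X_2\to\overline{\mathcal{M}}_{g,n}(X_1\times X_2,(B,0))$, $(f_1,p)\mapsto(f_1,\mathrm{const}_p)$, and I would verify on functors of points that it is an isomorphism of Deligne--Mumford stacks.

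The heart of the matter is the virtual class, for which the key input is the splitting of the pulled-back tangent bundle. Let $\pi\colon\mathcal{C}\to\overline{\mathcal{M}}_{g,n}(X_1,B)\times X_2$ be the universal curve (pulled back from the first factor) and $f=(f_1,\mathrm{const})$ the universal map; then $f^*T(X_1\times X_2)=f_1^*TX_1\oplus\mathcal{O}_{\mathcal{C}}\otimes\mathrm{pr}_{X_2}^*TX_2$. The relative perfect obstruction theory over the Artin stack $\mathfrak{M}_{g,n}$ of prestable curves is governed by $R\pi_*f^*T(X_1\times X_2)$, which therefore decomposes as $R\pi_*f_1^*TX_1\oplus(R\pi_*\mathcal{O}_{\mathcal{C}})\otimes\mathrm{pr}_{X_2}^*TX_2$. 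The first summand is precisely the pulled-back obstruction theory of $\overline{\mathcal{M}}_{g,n}(X_1,B)$. In the second summand, $R^0\pi_*\mathcal{O}_{\mathcal{C}}=\mathcal{O}$ supplies the unobstructed tangent directions of the $X_2$ factor (the infinitesimal motions of the point $p$), while relative Serre duality gives $R^1\pi_*\mathcal{O}_{\mathcal{C}}\cong\mathbb{E}^{*}$, so the obstruction part is the genuine vector bundle $\mathbb{E}^{*}\boxtimes TX_2$.

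Finally I would invoke the functoriality of virtual fundamental classes under such a splitting: when the obstruction theory is the direct sum of the obstruction theory of a factor, the tangent bundle of a smooth factor, and a globally defined excess bundle $\mathcal{V}$, the virtual class of the total space equals the virtual class of the factor times the fundamental class of the smooth factor, capped with $e(\mathcal{V})$. Applied with $\mathcal{V}=\mathbb{E}^{*}\boxtimes TX_2$ this produces the claimed formula. As a consistency check I would note that $\mathrm{rk}(\mathbb{E}^{*}\boxtimes TX_2)=g\dim X_2$, which matches exactly the drop of $g\dim X_2$ in virtual dimension between the two sides. The main obstacle is to upgrade the fiberwise cohomology computation to a genuine decomposition of the absolute perfect obstruction theories compatible with the morphisms to $\mathfrak{M}_{g,n}$, and then to justify the Euler-class contribution rigorously in the stacky setting through the intrinsic normal cone (equivalently, via the Behrend--Fantechi excess-intersection formalism), rather than arguing only at the level of tangent and obstruction spaces fiber by fiber.
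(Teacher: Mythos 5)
Your proposal follows essentially the same route as the paper's proof: both identify the moduli space with $\overline{\mathcal{M}}_{g,n}(X_1,B)\times X_2$, split the relative obstruction theory over the Artin stack $\mathfrak{M}_{g,n}$ of prestable curves into the pulled-back obstruction theory of $\overline{\mathcal{M}}_{g,n}(X_1,B)$ plus the summand $R\pi_*\mathcal{O}_{\mathcal{C}}\otimes TX_2$, identify its $R^1$ with the locally free sheaf $\mathbb{E}^{*}\boxtimes TX_2$, and conclude by an excess/product argument. The one step you flag as the remaining obstacle --- rigorously passing from the split obstruction theory to the Euler-class formula for the virtual class --- is exactly what the paper settles by citing Behrend's product formula for virtual fundamental classes \cite[Theorem 1]{behrend1997product}.
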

\begin{proof}
Let $\mathfrak{M}_{g,n}$ be the algebraic
 stack of $n$-marked prestable curves of genus $g$. This is an algebraic stack,
 not of Deligne-Mumford (or even finite) type, but smooth of dimension
 $3g-3+n$. There is a canonical morphism
 $\overline{\mathcal{M}}_{g,n}(X_1\times X_2,(B,0))\rightarrow\mathfrak{M}_{g,n}$,
 given by forgetting the map, retaining the curve (but not stabilizing). Then
 $\overline{\mathcal{M}}_{g,n}(X_1\times X_2,(B,0))\rightarrow\mathfrak{M}_{g,n}$ is an open substack of a stack of morphisms. In fact, this morphism is just the composition
\begin{align*}
\overline{\mathcal{M}}_{g,n}(X_1, B)\times X_2\rightarrow\overline{\mathcal{M}}_{g,n}(X_1, B)\rightarrow\mathfrak{M}_{g,n} 
\end{align*}
 of projection followed by inclusion. 
 Recall that $\overline{\mathcal{M}}_{g,n}(X_1, B)\rightarrow\mathfrak{M}_{g,n}$  has a relative obstruction theory, which is $(R\pi_*f^*TX_1)^{\vee}$, where $\pi: \mathcal{C}_{g,n}\rightarrow\overline{\mathcal{M}}_{g,n}(X_1, B)$ is the universal curve and $f:\mathcal{C}_{g,n}\rightarrow X_1$ is the universal
 stable map.
 The universal curve over $\overline{\mathcal{M}}_{g,n}(X_1\times X_2,(B,0))$ can be identified with $(\pi,id): \mathcal{C}_{g,n}\times X_2\rightarrow \overline{\mathcal{M}}_{g,n}(X_1,B)\times X_2$ and universal stable map can be identified with $(f,id): \mathcal{C}_{g,n}\times X_2\rightarrow X_1\times X_2$.  Then the relative obstruction theory of $\overline{\mathcal{M}}_{g,n}(X_1\times X_2,(B,0))\rightarrow\mathfrak{M}_{g,n}$ is $(R(\pi,id)_*(f,id)^*(TX_1\boxplus TX_2))^\vee\cong (R(\pi,id)_*(f,id)^*p_1^*TX_1)^{\vee}\oplus (R (\pi,id)_*(f,id)^*p_2^*TX_2)^{\vee}$, where we denote $p_1: X_1\times X_2\rightarrow X_1$ and $p_2: X_1\times X_2\rightarrow X_2$ to be the projection maps. Note that $R^1 (\pi,id)_*(f,id)^*p_2^*TX_2\cong \mathbb{E}^*\boxtimes TX_2$ is locally free. By the product formula for virtual fundamental class in \cite[Theorem 1]{behrend1997product}, we have 
\begin{align*}
&[\overline{\mathcal{M}}_{g,n}(X_1\times X_2,(B,0))]_{vir}=  \left([\overline{\mathcal{M}}_{g,n}(X_1, B)]_{vir}\times[X_2]\right)\cap e(\mathbb{E}^{*}\boxtimes TX_2).
\end{align*} 
\end{proof}

 Applying Proposition~\ref{prop:deg-0-vir-fun-class} to variety $Y=X\times\mathbb{P}^1$,  the virtual fundamental class of 
 the moduli space of fiber degree-0 maps to $Y=X\times\mathbb{P}^1$ has a very simple form: 
\begin{align}\label{eqn:fiber-0-Y-vir=lambda}
&[\overline{\mathcal{M}}_{g,n}(Y,(B,0))]_{vir}=  \left([\overline{\mathcal{M}}_{g,n}(X,B)]_{vir}\times[\mathbb{P}^1]\right)\cap e(\mathbb{E}^{*}\boxtimes T\mathbb{P}^1)
\nonumber\\=&\left([\overline{\mathcal{M}}_{g,n}(X,B)]_{vir}\times[\mathbb{P}^1]\right)\cap \left((-1)^g\lambda_{g}\otimes1+(-1)^{g-1}\lambda_{g-1}\otimes(2H)\right)
\end{align}
for any $g\geq0$. 


The fiber degree-0 descendant Gromov-Witten invariants of $Y=X\times\mathbb{P}^1$ is defined to be
\begin{align}\label{eqn:fdeg=0-def-des-gw-inv-Y}
\langle\tau_{k_1}(\phi_{(\alpha_1,j_1)})...\tau_{k_n}(\phi_{(\alpha_n,j_n)})\rangle_{g,n,(B,0)}^{Y}:=\int_{[\overline{\mathcal{M}}_{g,n}(Y,(B,0))]_{vir}} \prod_{i=1}^{n}ev_i^*\phi_{(\alpha_i,j_i)}\cup c_1(L_i)^{k_i}   
\end{align}
where $L_i$ are the tautological line bundles over $\overline{\mathcal{M}}_{g,n}(Y,(B,0))$, and $ev_i: \overline{\mathcal{M}}_{g,n}(Y,(B,0))\rightarrow Y$ are
the evaluation maps. Let  $\mathbf{t}$ be the sets of variables $\{t_n^\alpha: 1\leq\alpha\leq N, n\geq0\}$ and $\mathbf{u}$ be the sets of variables $\{u_n^\alpha: 1\leq\alpha\leq N, n\geq0\}$. 
We
define the generating
function of genus-$g$ 
 fiber degree-0 descendant Gromov-Witten invariants
\begin{align}\label{eqn:def0Fggw-deg--0}
\widetilde{F}_g(\mathbf{t},\mathbf{u}):=\sum_{B\in H_2(X;\mathbb{Z})}q^{B}\left\langle\exp\left(\sum_{n,\alpha}t_n^\alpha\tau_{n}(\phi_\alpha\otimes1)+u_n^\alpha\tau_{n}(\phi_\alpha\otimes H)\right)\right\rangle_{g,(B,0)}
\end{align}
and its total  descendant potential
\begin{align}\label{eqn:total-de=0}
\widetilde{\mathcal{D}}(\mathbf{t},\mathbf{u}):=\exp\left(\sum_{g\geq0}\hbar^{2g-2}\widetilde{F}_g(\mathbf{t},\mathbf{u})\right).    
\end{align}

Then fiber degree-0 total descendant potential can be expressed in terms of Hodge integrals of $X$ as following proposition.
\begin{proposition}\label{prop:degree-0=total-Hodge}
\begin{align}\label{eqn:degree-0=total-Hodge}
&\widetilde{\mathcal{D}}(\mathbf{t},\mathbf{u})
\nonumber\\=&\exp\Bigg(\sum_{g\geq0}\hbar^{2g-2}(-1)^{g-1}2\langle\langle;\lambda_{g-1}\rangle\rangle^{X}_{g}(\mathbf{t})
+\sum_{g\geq0}\sum_{m,\beta}\hbar^{2g-2}(-1)^{g}u_{m}^{\beta}\langle\langle\tau_{m}(\phi_\beta);\lambda_{g}\rangle\rangle^{X}_{g}(\mathbf{t})\Bigg).\end{align}    
\end{proposition}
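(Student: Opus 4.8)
The plan is to expand the exponential defining $\widetilde{F}_g(\mathbf{t},\mathbf{u})$ into a sum of fiber degree-$0$ correlators, organize them according to how many insertions are of the \emph{$H$-type} $\tau_n(\phi_\alpha\otimes H)$ as opposed to the \emph{$1$-type} $\tau_n(\phi_\alpha\otimes 1)$, and evaluate each correlator by combining the product description of Proposition~\ref{prop:deg-0-vir-fun-class} with the virtual class formula~\eqref{eqn:fiber-0-Y-vir=lambda}. The outcome I am aiming for is that $\widetilde{F}_g$ turns out to be affine-linear in $\mathbf{u}$, with constant term the $\lambda_{g-1}$ generating function and linear term the $\lambda_g$ generating function; exponentiating then gives the stated identity.

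First I would fix the product compatibility of all tautological data under the identification $\overline{\mathcal{M}}_{g,n}(Y,(B,0))\cong\overline{\mathcal{M}}_{g,n}(X,B)\times\mathbb{P}^1$. Since the domain curve is unaffected by the $\mathbb{P}^1$ factor, the $i$-th cotangent line bundle is pulled back from the first factor, so $c_1(L_i)=\psi_i\otimes 1$; and since the map to $\mathbb{P}^1$ is constant in fiber degree $0$, the evaluation map factors as $ev_i=(ev_i^X,\mathrm{pr})$, where $\mathrm{pr}$ is the projection to $\mathbb{P}^1$, giving $ev_i^*(\phi_\alpha\otimes H^{j})=(ev_i^X)^*\phi_\alpha\otimes\mathrm{pr}^*H^{j}$. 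Consequently the integrand $\prod_i ev_i^*\phi_{(\alpha_i,j_i)}\cup c_1(L_i)^{k_i}$ factors as $\big(\prod_i (ev_i^X)^*\phi_{\alpha_i}\cup\psi_i^{k_i}\big)\otimes H^{\sum_i j_i}$, so the total $\mathbb{P}^1$-degree of the integrand is exactly $\sum_i j_i$.

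Next I would cap against~\eqref{eqn:fiber-0-Y-vir=lambda} and integrate over the $\mathbb{P}^1$ factor, using $\int_{\mathbb{P}^1}1=0$, $\int_{\mathbb{P}^1}H=1$ and $H^2=0$. Only two families of correlators survive: if $\sum_i j_i=0$, that is all insertions are of $1$-type, only the $(-1)^{g-1}\lambda_{g-1}\otimes(2H)$ summand of the virtual class contributes, producing a factor $2(-1)^{g-1}$ times the Hodge integral $\langle\prod_i\tau_{k_i}(\phi_{\alpha_i});\lambda_{g-1}\rangle^X_{g,B}$; if $\sum_i j_i=1$, that is exactly one insertion is of $H$-type, only the $(-1)^g\lambda_g\otimes 1$ summand contributes, producing a factor $(-1)^g$ times $\langle\tau_m(\phi_\beta)\prod_i\tau_{k_i}(\phi_{\alpha_i});\lambda_g\rangle^X_{g,B}$. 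Every correlator with two or more $H$-type insertions vanishes, because $H^{\sum_i j_i}=0$ once $\sum_i j_i\geq 2$, and a fortiori after multiplication by the class $2H$ in the virtual class.

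Finally I would reassemble these two families, after summing over $B$ with weights $q^B$ and exponentiating the $1$-type insertions into the base point $\mathbf{t}$. The correlators with no $H$-type insertion collect into the $\mathbf{u}$-independent term $2(-1)^{g-1}\langle\langle;\lambda_{g-1}\rangle\rangle^X_g(\mathbf{t})$, and those with a single $H$-type insertion $u_m^\beta\tau_m(\phi_\beta\otimes H)$ collect into $\sum_{m,\beta}(-1)^g u_m^\beta\langle\langle\tau_m(\phi_\beta);\lambda_g\rangle\rangle^X_g(\mathbf{t})$. Thus $\widetilde{F}_g$ is affine-linear in $\mathbf{u}$ with exactly these pieces, and substituting into $\widetilde{\mathcal{D}}=\exp(\sum_g\hbar^{2g-2}\widetilde{F}_g)$ yields~\eqref{eqn:degree-0=total-Hodge}. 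The part requiring genuine care, rather than a deep obstacle, is the bookkeeping of the middle step: verifying that the $\mathbb{P}^1$-degree of the integrand is simply $\sum_i j_i$, and then tracking the signs $(-1)^g$, $(-1)^{g-1}$ and the factor $2$ through integration against the two summands of the virtual class. The degenerate genus-$0$ endpoint, where $\lambda_{-1}=0$ and $\lambda_0=1$, is automatically consistent with both sides.
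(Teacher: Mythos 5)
Your proof is correct and takes essentially the same approach as the paper: both arguments cap the factorized integrand against the virtual class formula~\eqref{eqn:fiber-0-Y-vir=lambda}, observe that only correlators with zero or exactly one $H$-type insertion survive (the former contributing $2(-1)^{g-1}$ times a $\lambda_{g-1}$-integral, the latter $(-1)^{g}$ times a $\lambda_{g}$-integral, with all others vanishing for degree reasons), and then exponentiate via the definitions \eqref{eqn:def0Fggw-deg--0} and \eqref{eqn:total-de=0}. The only difference is that you spell out the pullback bookkeeping for the cotangent lines and evaluation maps and the $\mathbb{P}^1$-integration, which the paper leaves implicit.
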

\begin{proof}
By equation \eqref{eqn:fiber-0-Y-vir=lambda}, 
we have
\begin{align*}
\langle\tau_{k_1}(\phi_{\alpha_1}\otimes1)...\tau_{k_n}(\phi_{\alpha_n}\otimes1)\rangle_{g,n,(B,0)}^{Y}=(-1)^{g-1} 2
\langle\tau_{k_1}(\phi_{\alpha_1})...\tau_{k_n}(\phi_{\alpha_n});\lambda_{g-1}\rangle^{X}_{g,n,B},\quad g\geq0
\end{align*}
and
\begin{align*}
\langle\tau_{k_1}(\phi_{\alpha_1}\otimes H)\tau_{k_2}(\phi_{\alpha_2}\otimes1)...\tau_{k_n}(\phi_{\alpha_n}\otimes1)\rangle^{Y}_{g,n,(B,0)}=(-1)^{g}
\langle\tau_{k_1}(\phi_{\alpha_1})...\tau_{k_n}(\phi_{\alpha_n});\lambda_{g}\rangle^{X}_{g,n,B}, \quad g\geq0. 
\end{align*}
Moreover, by degree reason, all the correlators with at least two insertions of the form $\phi_\alpha\otimes H$ vanish.
Then equation~\eqref{eqn:degree-0=total-Hodge} just follows from definition 
\eqref{eqn:def0Fggw-deg--0} and \eqref{eqn:total-de=0}. 
\end{proof}





\subsection{The fiber degree-0 Virasoro conjecture for $Y=X\times\mathbb{P}^1$}The Virasoro 
 conjecture for $Y=X\times \mathbb{P}^1$ implies that the total descendant potential  for its fiber degree-0 descendent
 Gromov-Witten invariants is also annihilated by operators $\{L_n: n\geq-1\}$ in Proposition \ref{prop:Ln-expression}. That is 
 \begin{align*}
\widetilde{\mathcal{D}}^{-1}L_n\widetilde{\mathcal{D}}=0,\quad n\geq-1.    
 \end{align*}
 We call this  the fiber degree-0 Virasoro conjecture for $Y=X\times\mathbb{P}^1$. 
In this subsection, we establish the explicit relations between the fiber degree-0 Virasoro constraints for $Y=X\times\mathbb{P}^1$ and Conjecture~\ref{conj:lambda-g}.  
 
For the first order derivative of the total descendant potential $\widetilde{\mathcal{D}}$, we have
\begin{lemma}\label{lem:1-order-der-degree-0=total-Hodge}
For any $s$ and $\alpha$,
\begin{align*}
(i)&\quad \tau_{s}(\phi_\alpha\otimes1)\widetilde{\mathcal{D}}
\\&=\widetilde{\mathcal{D}}\cdot
\Bigg(\sum_{g\geq0}\hbar^{2g-2}(-1)^{g-1}2\langle\langle\tau_{s}(\phi_\alpha);\lambda_{g-1}\rangle\rangle^{X}_{g}
+\sum_{g\geq0}\sum_{m,\beta}\hbar^{2g-2}(-1)^{g}u_{m}^{\beta}\langle\langle\tau_{m}(\phi_\beta)\tau_{s}(\phi_\alpha);\lambda_{g}\rangle\rangle^{X}_{g}
\Bigg)
\\(ii)&\quad\tau_{s}(\phi_\alpha\otimes H)\widetilde{\mathcal{D}}   =\widetilde{\mathcal{D}}\cdot\sum_{g\geq0}\hbar^{2g-2}(-1)^{g}\langle\langle\tau_{s}(\phi_\alpha);\lambda_{g}\rangle\rangle^{X}_{g}.
\end{align*}    
\end{lemma}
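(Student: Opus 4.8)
The plan is to prove both identities by differentiating the explicit closed form for $\widetilde{\mathcal{D}}$ established in Proposition~\ref{prop:degree-0=total-Hodge} and applying the chain rule for the exponential. The key point is purely notational: under the conventions $t_n^{(\alpha,0)}=t_n^\alpha$ and $t_n^{(\alpha,1)}=u_n^\alpha$, the vector field $\tau_s(\phi_\alpha\otimes1)$ is exactly the coordinate derivative $\partial/\partial t_s^\alpha$ and $\tau_s(\phi_\alpha\otimes H)$ is exactly $\partial/\partial u_s^\alpha$ on the big phase space of $Y$. Hence both statements amount to computing $\widetilde{\mathcal{D}}^{-1}\,\partial\widetilde{\mathcal{D}}=\partial\log\widetilde{\mathcal{D}}$ in these two coordinate directions, where $\log\widetilde{\mathcal{D}}$ is the exponent appearing on the right-hand side of \eqref{eqn:degree-0=total-Hodge}.

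First I would write $\log\widetilde{\mathcal{D}}=G(\mathbf{t},\mathbf{u})$ with
\begin{align*}
G(\mathbf{t},\mathbf{u})=&\sum_{g\geq0}\hbar^{2g-2}(-1)^{g-1}2\langle\langle;\lambda_{g-1}\rangle\rangle^{X}_{g}(\mathbf{t}) \\
&+\sum_{g\geq0}\sum_{m,\beta}\hbar^{2g-2}(-1)^{g}u_{m}^{\beta}\langle\langle\tau_{m}(\phi_\beta);\lambda_{g}\rangle\rangle^{X}_{g}(\mathbf{t}),
\end{align*}
and observe that every $X$-side correlation function $\langle\langle\cdots\rangle\rangle^{X}_{g}$ depends on $\mathbf{t}$ only. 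Differentiating $G$ with respect to $t_s^\alpha$ inserts a factor $\tau_s(\phi_\alpha)$ into each $X$-side bracket, since $\partial/\partial t_s^\alpha$ acting on the genus-$g$ Hodge generating function of $X$ is the insertion $\tau_s(\phi_\alpha)$; this yields part $(i)$ after multiplying by $\widetilde{\mathcal{D}}$. For part $(ii)$ I differentiate $G$ with respect to $u_s^\alpha$: the first sum is independent of $\mathbf{u}$ and the second is linear in the variables $\{u_m^\beta\}$, so only the coefficient of $u_s^\alpha$ survives, giving $\sum_{g\geq0}\hbar^{2g-2}(-1)^g\langle\langle\tau_s(\phi_\alpha);\lambda_g\rangle\rangle^X_g$; multiplying by $\widetilde{\mathcal{D}}$ gives $(ii)$.

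There is no serious obstacle here: the statement is a direct chain-rule computation once Proposition~\ref{prop:degree-0=total-Hodge} is in hand. The only care required is the bookkeeping of conventions, namely keeping straight which coordinate ($t$ or $u$) pairs with which insertion ($\phi_\alpha\otimes1$ or $\phi_\alpha\otimes H$), and recording that the $X$-side brackets are functions of $\mathbf{t}$ alone so that the $u$-derivative only hits the explicit linear factor. I would present the argument as two short lines, one for each coordinate direction.
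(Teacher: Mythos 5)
Your proposal is correct and is exactly the paper's own argument: the paper likewise proves the lemma by differentiating the closed formula of Proposition~\ref{prop:degree-0=total-Hodge} along $\frac{\partial}{\partial t_s^\alpha}$ and $\frac{\partial}{\partial u_s^\alpha}$, using the identifications $\tau_s(\phi_\alpha\otimes1)=\frac{\partial}{\partial t_s^\alpha}$ and $\tau_s(\phi_\alpha\otimes H)=\frac{\partial}{\partial u_s^\alpha}$. Your write-up just spells out the chain-rule and linearity-in-$\mathbf{u}$ bookkeeping that the paper leaves implicit.
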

\begin{proof}
This follows directly from equation~\eqref{eqn:degree-0=total-Hodge} and its derivative along vector field $\frac{\partial}{\partial t_{s}^\alpha}$ and $\frac{\partial}{\partial u_s^\alpha}$.   
\end{proof}

The action of Virasoro operator $L_n$ on total descendant potential is as follows.
\begin{theorem}
\label{thm:D-1Ln-D-formula}\begin{align*}
\widetilde{\mathcal{D}}^{-1}L_n\widetilde{\mathcal{D}}=&\sum_{g\geq0}\hbar^{2g-2}(-1)^{g}\left((-2)\Psi_{g,n}+\sum_{m,\beta}u_m^\beta\Theta_{g,n,m,\beta}\right) 
\end{align*}    
where
\begin{align}\label{eqn:Theta-formula}
&\Theta_{g,n,m,\beta}
\nonumber\\=    
&\sum_{j=0}^{n+1}\sum_{r,\alpha}e_{n+1-j}(r+b_{\alpha}-\frac{1}{2},...,r+n+b_{\alpha}-\frac{1}{2})\tilde{t}_r^{\alpha} 
\langle\langle\tau_{r+n-j}(c_1(X)^{j}\cup\phi_\alpha)\tau_{m}(\phi_\beta);\lambda_{g}\rangle\rangle^{X}_{g}
\nonumber\\&+
\sum_{j=0}^{n+1}e_{n+1-j}(m+b_{\beta}+\frac{1}{2},...,m+n+b_{\beta}+\frac{1}{2})\langle\langle \tau_{m+n-j}(c_1(X)^{j}\cup\phi_{\beta});\lambda_{g}\rangle\rangle^{X}_{g}
\nonumber\\&+\sum_{j=0}^{n+1}\sum_{s,\alpha}(-1)^{-s-1}e_{n+1-j}(-s+b^{\alpha}-\frac{3}{2},...,-s+n+b^{\alpha}-\frac{3}{2})
\nonumber\\&\quad\quad\cdot\sum_{g_1+g_2=g}\langle\langle\tau_{m}(\phi_\beta)\tau_{-s-1+n-j}(c_1(X)^j\cup\phi^\alpha);\lambda_{g_1}\rangle\rangle^{X}_{g_1}
\langle\langle\tau_{s}(\phi_\alpha);\lambda_{g_2}\rangle\rangle^{X}_{g_2}
\nonumber\\&+\delta_{g}^0\delta_{m}^{0}\sum_{\alpha}t_0^{\alpha}\int_{X}c_1(X)^{n+1}\cup\phi_\alpha\cup\phi_\beta
\end{align}
and
\begin{align*}
&\Psi_{g,n}
\nonumber\\=&\sum_{j=0}^{n+1}\sum_{r,\alpha}e_{n+1-j}(r+b_{\alpha}-\frac{1}{2},...,r+n+b_{\alpha}-\frac{1}{2})\tilde{t}_r^{\alpha} \langle\langle\tau_{r+n-j}(c_1(X)^{j}\cup\phi_\alpha);\lambda_{g-1}\rangle\rangle^{X}_{g}
\nonumber\\&-\sum_{j=0}^{n+1}\sum_{r,\alpha}(j)\cdot e_{n+1-j}(r+b_{\alpha}-\frac{1}{2},...,r+n+b_{\alpha}-\frac{1}{2})\tilde{t}_r^{\alpha}  \langle\langle\tau_{r+n-j}( c_1(X)^{j-1}\cup\phi_{\alpha});\lambda_{g}\rangle\rangle^{X}_{g}
\nonumber\\&+\sum_{j=0}^{n+1}\sum_{s,\alpha}\sum_{g_1+g_2=g}(-1)^{-s-1}e_{n+1-j}(-s-\frac{3}{2}+b^{\alpha},...,-s-\frac{3}{2}+n+b^{\alpha})
\nonumber\\&\hspace{30pt}\cdot\langle\langle\tau_{-s-1+n-j}(c_1(X)^j\cup\phi^\alpha);\lambda_{g_1-1}\rangle\rangle^{X}_{g_1}
\langle\langle\tau_{s}(\phi_\alpha);\lambda_{g_2}\rangle\rangle^{X}_{g_2}
\nonumber\\&+\frac{1}{2}\sum_{j=0}^{n+1}\sum_{s,\alpha}(-1)^{-s-1}e_{n+1-j}(-s-\frac{3}{2}+b^{\alpha},...,-s-\frac{3}{2}+n+b^{\alpha})
\nonumber\\&\hspace{60pt}\cdot\langle\langle\tau_{-s-1+n-j}(c_1(X)^j\cup\phi^\alpha)\tau_{s}(\phi_\alpha);\lambda_{g-1}\rangle\rangle^{X}_{g-1}
\nonumber\\&-\frac{1}{2}\sum_{j=0}^{n+1}\sum_{s,\alpha}\sum_{g_1+g_2=g}(j)\cdot (-1)^{-s-1}e_{n+1-j}(-s-\frac{3}{2}+b^{\alpha},...,-s-\frac{3}{2}+n+b^{\alpha})\nonumber\\&\hspace{60pt}\cdot\langle\langle\tau_{s}(\phi_{\alpha});\lambda_{g_1}\rangle\rangle^{X}_{g_1}
\langle\langle\tau_{-s-1+n-j}(c_1(X)^{j-1}\cup\phi^{\alpha});\lambda_{g_2}\rangle\rangle^{X}_{g_2}
\nonumber\\&-\delta_{g,0}\frac{n+1}{2}\sum_{\alpha,\beta}t_0^{\alpha} t_0^{\beta}\int_{X}c_1(X)^{n}\cup\phi_\alpha\cup \phi_\beta
+\frac{\delta_{g,1}\delta_{n,0}}{48}\int_{X}\left((-d)c_d(X)-2c_1(X)\cup c_{d-1}(X)\right)
\end{align*}
where $c_1(X)$ is the first Chern class of $X$, $b_\alpha=b_\alpha^{X}$ and $b^\alpha=b_X^\alpha$.
\end{theorem}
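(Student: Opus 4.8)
The plan is to compute $\widetilde{\mathcal{D}}^{-1}L_n\widetilde{\mathcal{D}}$ directly from the explicit expression for $L_n$ in Proposition~\ref{prop:Ln-expression}, converting every term into Hodge correlators of $X$ via Proposition~\ref{prop:degree-0=total-Hodge} and Lemma~\ref{lem:1-order-der-degree-0=total-Hodge}, and then sorting the output by powers of $\hbar$ and by the presence or absence of the variables $u_m^\beta$. The key structural observation that makes this feasible is that, by Proposition~\ref{prop:degree-0=total-Hodge}, the function $G:=\log\widetilde{\mathcal{D}}=\sum_{g}\hbar^{2g-2}\widetilde{F}_g$ is \emph{affine linear} in the variables $\mathbf{u}=(u_m^\beta)$, with coefficients that are Hodge integral generating functions of $X$ depending only on $\mathbf{t}$. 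Writing $L_n$ as a sum of a first-order part (the three $\tilde t$- and $u$-linear terms), a second-order part (the two $\hbar^2$ terms), and a classical/constant part (the $\hbar^{-2}$ and $\delta_{n,0}$ terms), I will treat each part separately.

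For the first-order part I simply apply Lemma~\ref{lem:1-order-der-degree-0=total-Hodge}: the $\otimes 1$ term, via part (i), splits into a $u$-free piece carrying $\langle\langle\,\cdot\,;\lambda_{g-1}\rangle\rangle^X_g$ and a $u_m^\beta$-linear piece carrying a two-point Hodge correlator; the two $\otimes H$ terms, via part (ii), produce a $u$-free and a $u$-linear one-point correlator respectively (the latter because of the explicit $u_r^\alpha$ coefficient in $L_n$). Matching these against the first two lines of $\Theta_{g,n,m,\beta}$ and the first two terms of $\Psi_{g,n}$ is routine once the coefficients $e_{n+1-j}$ are lined up. For the second-order part I use the identity $\widetilde{\mathcal{D}}^{-1}\partial_A\partial_B\widetilde{\mathcal{D}}=\partial_A\partial_B G+(\partial_A G)(\partial_B G)$. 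Here the affine linearity of $G$ in $\mathbf{u}$ is decisive: the $\tau(\,\cdot\otimes H)\tau(\,\cdot\otimes H)$ term has both $A,B$ of $u$-type, so $\partial_A\partial_B G=0$ and only the product $(\partial_A G)(\partial_B G)$ survives, producing the genus convolution $\sum_{g_1+g_2=g}$ of two one-point correlators that matches the fifth term of $\Psi_{g,n}$ (the prefactor $\tfrac{\hbar^2}{2}$ and the coefficient $2j$ combining to $j$). The mixed term $\tau(\,\cdot\otimes H)\tau(\,\cdot\otimes 1)$ contributes three pieces: the connected part $\partial_A\partial_B G$, which after the $\hbar^2$ shift lands at genus $g-1$ and yields the fourth term of $\Psi_{g,n}$; and the disconnected product, whose $u$-free half gives the third term of $\Psi_{g,n}$ and whose $u_m^\beta$-linear half gives the third line of $\Theta_{g,n,m,\beta}$.

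The classical and constant terms pass through unchanged, since $\widetilde{\mathcal{D}}^{-1}(\text{multiplication})\widetilde{\mathcal{D}}$ is just the multiplication operator: the $\tfrac{n+1}{\hbar^2}t_0t_0$ term sits at genus $0$ and matches the $t_0^\alpha t_0^\beta$ term of $\Psi_{0,n}$, the $\tfrac{1}{\hbar^2}t_0u_0$ term sits at genus $0$ and matches the last line of $\Theta_{0,n,0,\beta}$, and the $\delta_{n,0}$ term sits at genus $1$ and matches the final term of $\Psi_{1,n}$. Finally I collect everything: grouping by the common factor $\hbar^{2g-2}(-1)^g$, the $u$-free contributions assemble into $(-2)\Psi_{g,n}$ and the coefficient of each $u_m^\beta$ assembles into $\Theta_{g,n,m,\beta}$, giving the claimed formula.

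I expect the main obstacle to be purely bookkeeping rather than conceptual: keeping the signs $(-1)^g$, $(-1)^{g-1}$ and $(-1)^{-s-1}$ consistent, correctly absorbing the overall $(-2)$ into the various factors of $\tfrac12$ appearing in $\Psi_{g,n}$, and tracking the genus shift by one produced by the $\hbar^2$ prefactor on the connected two-point function (which is exactly what turns $\lambda_g$ into $\lambda_{g-1}$ in the fourth term of $\Psi_{g,n}$). The symmetry of the two-point Hodge correlators under exchanging insertions must also be invoked to identify the term coming from $\partial_A\partial_B G$ with the symmetric expression written in $\Psi_{g,n}$.
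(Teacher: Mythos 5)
Your proposal is correct and follows essentially the same route as the paper: decompose $L_n$ into its first-order, second-order, and multiplication parts from Proposition~\ref{prop:Ln-expression}, convert each via Proposition~\ref{prop:degree-0=total-Hodge} and Lemma~\ref{lem:1-order-der-degree-0=total-Hodge}, and collect by powers of $\hbar$ and by $u$-dependence. Your explicit use of the identity $\widetilde{\mathcal{D}}^{-1}\partial_A\partial_B\widetilde{\mathcal{D}}=\partial_A\partial_B G+(\partial_A G)(\partial_B G)$ together with the affine linearity of $G=\log\widetilde{\mathcal{D}}$ in $\mathbf{u}$ is just a cleaner packaging of what the paper does when it applies Lemma~\ref{lem:1-order-der-degree-0=total-Hodge} twice to the terms $L_n^{[4]}$ and $L_n^{[5]}$, so the two arguments coincide in substance.
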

\begin{proof}
We compute the action of each terms of $L_n$ in equation~\eqref{eqn:prop-Ln-expression} on the expression of  $\widetilde{\mathcal{D}}$ in Proposition~\ref{prop:degree-0=total-Hodge} explicitly.  Formally, we write the formula~\eqref{eqn:prop-Ln-expression} of $L_n$ as $L_n=\sum_{i=1}^{8}L_n^{[i]}$, where $L_n^{[i]}$ is the $i$-th summand in the right hand side of equation~\eqref{eqn:prop-Ln-expression}. 
 For the first term in $L_n$,
\begin{align*}
L_n^{[1]}=\sum_{j=0}^{n+1}\sum_{j,\alpha}e_{n+1-j}(r+b_\alpha-\frac{1}{2},...,r+n+b_\alpha-\frac{1}{2})\tilde{t}_r^\alpha \tau_{r+n-j}((c_1(X)^j\cup\phi_\alpha)\otimes1),    
\end{align*}
then we have
\begin{align*}
&\widetilde{\mathcal{D}}^{-1}L_{n}^{[1]}\widetilde{\mathcal{D}}
\\=&\sum_{g\geq0}\hbar^{2g-2}(-1)^{g-1}2\sum_{j=0}^{n+1}\sum_{r,\alpha}e_{n+1-j}(r+b_{\alpha}-\frac{1}{2},...,r+n+b_{\alpha}-\frac{1}{2})\tilde{t}_r^{\alpha} \langle\langle\tau_{r+n-j}(c_1(X)^{j}\cup\phi_\alpha);\lambda_{g-1}\rangle\rangle^{X}_{g}
\\&+\sum_{g\geq0}\sum_{j=0}^{n+1}\sum_{r,\alpha,\beta}\sum_{m}\hbar^{2g-2}(-1)^{g}e_{n+1-j}(r+b_{\alpha}-\frac{1}{2},...,r+n+b_{\alpha}-\frac{1}{2})u_{m}^{\beta}\tilde{t}_r^{\alpha} 
\\&\hspace{80pt}\cdot
\langle\langle\tau_{r+n-j}(c_1(X)^{j}\cup\phi_\alpha)\tau_{m}(\phi_\beta);\lambda_{g}\rangle\rangle^{X}_{g}.
\end{align*}
For the second term in $L_n$,
\begin{align*}
L_n^{[2]}=\sum_{j=0}^{n+1}\sum_{r,\alpha}(2j)\cdot e_{n+1-j}(r+b_{\alpha}-\frac{1}{2},...,r+n+b_{\alpha}-\frac{1}{2})\tilde{t}_r^{\alpha} \tau_{r+n-j}(( c_1(X)^{j-1}\cup\phi_{\alpha})\otimes H),    
\end{align*}
then we have 
\begin{align*}
&\widetilde{{\mathcal{D}}}^{-1}L_n^{[2]}\widetilde{\mathcal{D}}
\\=&
\sum_{g\geq0}\hbar^{2g-2}(-1)^{g}\sum_{j=0}^{n+1}\sum_{r,\alpha}(2j)\cdot e_{n+1-j}(r+b_{\alpha}-\frac{1}{2},...,r+n+b_{\alpha}-\frac{1}{2})\tilde{t}_r^{\alpha}  \langle\langle\tau_{r+n-j}(c_1(X)^{j-1}\cup\phi_{\alpha});\lambda_{g}\rangle\rangle^{X}_{g}.
\end{align*}
For the third term in $L_n$, 
\begin{align*}
L_n^{[3]}=    
\sum_{j=0}^{n+1}\sum_{r,\alpha}e_{n+1-j}(r+b_{\alpha}+\frac{1}{2},...,r+n+b_{\alpha}+\frac{1}{2})u_r^{\alpha} \tau_{r+n-j}((c_1(X)^{j}\cup\phi_{\alpha})\otimes H),\end{align*}
then we have
\begin{align*}
&\widetilde{\mathcal{D}}^{-1}L_n^{[3]}\widetilde{\mathcal{D}}
\\=&
\sum_{g\geq0}\hbar^{2g-2}(-1)^{g}
\sum_{j=0}^{n+1}\sum_{r,\alpha}e_{n+1-j}(r+b_{\alpha}+\frac{1}{2},...,r+n+b_{\alpha}+\frac{1}{2})u_r^{\alpha}\langle\langle \tau_{r+n-j}(c_1(X)^{j}\cup\phi_{\alpha});\lambda_{g}\rangle\rangle^{X}_{g}.
\end{align*}
For the fourth term in $L_n$, we have
\begin{align*}
L_n^{[4]}= &\hbar^2\sum_{j=0}^{n+1}\sum_{s,\alpha}(-1)^{-s-1}e_{n+1-j}(-s-\frac{3}{2}+b^{\alpha},...,-s-\frac{3}{2}+n+b^{\alpha})\\&\hspace{60pt}\cdot\tau_s(\phi_{\alpha}\otimes H)\tau_{-s-1+n-j}((c_1(X)^{j}\cup\phi^{\alpha})\otimes1).
\end{align*}
By Lemma~\ref{lem:1-order-der-degree-0=total-Hodge}, it is direct to compute
\begin{align*}
&\widetilde{\mathcal{D}}^{-1}\tau_s(\phi_{\alpha}\otimes H)\tau_{-s-1+n-j}((c_1(X)^{j}\cup\phi^{\alpha})\otimes1)\widetilde{\mathcal{D}}
\\=&\left(\sum_{g\geq0}\hbar^{2g-2}(-1)^{g-1}2\langle\langle\tau_{-s-1+n-j}(c_1(X)^j\cup\phi^\alpha);\lambda_{g-1}\rangle\rangle^{X}_{g}\right)\left(
\sum_{g\geq0}\hbar^{2g-2}(-1)^{g}\langle\langle\tau_{s}(\phi_\alpha);\lambda_{g}\rangle\rangle^{X}_{g}\right)
\\&+\left(\sum_{g\geq0}\sum_{m,\beta}\hbar^{2g-2}(-1)^{g}u_{m}^{\beta}\langle\langle\tau_{m}(\phi_\beta)\tau_{-s-1+n-j}(c_1(X)^j\cup\phi^\alpha);\lambda_{g}\rangle\rangle^{X}_{g}\right)\left(
\sum_{g\geq0}\hbar^{2g-2}(-1)^{g}\langle\langle\tau_{s}(\phi_\alpha);\lambda_{g}\rangle\rangle^{X}_{g}\right)
\\&+\sum_{g\geq0}\hbar^{2g-2}(-1)^{g}\langle\langle\tau_{-s-1+n-j}(c_1(X)^j\cup\phi^\alpha)\tau_{s}(\phi_\alpha);\lambda_{g}\rangle\rangle^{X}_{g}.
\end{align*}
Then we have
\begin{align*}
&\widetilde{\mathcal{D}}^{-1}L_n^{[4]}\widetilde{\mathcal{D}}
\\=&\hbar^2\sum_{j=0}^{n+1}\sum_{s,\alpha}(-1)^{-s-1}e_{n+1-j}(-s-\frac{3}{2}+b^{\alpha},...,-s-\frac{3}{2}+n+b^{\alpha})
\\&\cdot\left(\sum_{g\geq0}\hbar^{2g-2}(-1)^{g-1}2\langle\langle\tau_{-s-1+n-j}(c_1(X)^j\cup\phi^\alpha);\lambda_{g-1}\rangle\rangle^{X}_{g}\right)
\left(\sum_{g\geq0}\hbar^{2g-2}(-1)^{g}\langle\langle\tau_{s}(\phi_\alpha);\lambda_{g}\rangle\rangle^{X}_{g}\right)
\\&+\hbar^2\sum_{j=0}^{n+1}\sum_{s,\alpha}(-1)^{-s-1}e_{n+1-j}(-s-\frac{3}{2}+b^{\alpha},...,-s-\frac{3}{2}+n+b^{\alpha})
\\&\hspace{10pt}\cdot\left(\sum_{g\geq0}\sum_{m,\beta}\hbar^{2g-2}(-1)^{g}u_{m}^{\beta}\langle\langle\tau_{m}(\phi_\beta)\tau_{-s-1+n-j}(c_1(X)^j\cup\phi^\alpha);\lambda_{g}\rangle\rangle^{X}_{g}\right)
\left(\sum_{g\geq0}\hbar^{2g-2}(-1)^{g}\langle\langle\tau_{s}(\phi_\alpha);\lambda_{g}\rangle\rangle^{X}_{g}\right)
\\&+\hbar^2\sum_{j=0}^{n+1}\sum_{s,\alpha}(-1)^{-s-1}e_{n+1-j}(-s-\frac{3}{2}+b^{\alpha},...,-s-\frac{3}{2}+n+b^{\alpha})
\\&\hspace{30pt}\cdot\left(\sum_{g\geq0}\hbar^{2g-2}(-1)^{g}\langle\langle\tau_{-s-1+n-j}(c_1(X)^j\cup\phi^\alpha)\tau_{s}(\phi_\alpha);\lambda_{g}\rangle\rangle^{X}_{g}\right).
\end{align*}
For the fifth term in $L_n$, we have
\begin{align*}
L_n^{[5]}= &\frac{\hbar^2}{2}\sum_{j=0}^{n+1}\sum_{s,\alpha}(2j)\cdot(-1)^{-s-1}e_{n+1-j}(-s-\frac{3}{2}+b^{\alpha},...,-s-\frac{3}{2}+n+b^{\alpha})\\&\hspace{60pt}\cdot\tau_s(\phi_{\alpha}\otimes H)\tau_{-s-1+n-j}( (c_1(X)^{j-1}\cup\phi^{\alpha})\otimes H).     
\end{align*}
By Lemma~\ref{lem:1-order-der-degree-0=total-Hodge}, it is direct to compute
\begin{align*}
&\widetilde{\mathcal{D}}^{-1}\tau_s(\phi_\alpha\otimes H)\tau_{-s-1+n-j}((c_1^{j-1}\cdot\phi^\alpha)\otimes H)\widetilde{\mathcal{D}}
\\=&\left(\sum_{g\geq0}\hbar^{2g-2}(-1)^{g}\langle\langle\tau_{s}(\phi_{\alpha});\lambda_{g}\rangle\rangle^{X}_{g}\right)
\left(\sum_{g\geq0}\hbar^{2g-2}(-1)^{g}\langle\langle\tau_{-s-1+n-j}( c_1(X)^{j-1}\cup\phi^{\alpha});\lambda_{g}\rangle\rangle^{X}_{g}\right).
\end{align*}
Then we have
\begin{align*}
&\widetilde{\mathcal{D}}^{-1}L_n^{[5]}\widetilde{\mathcal{D}}
\\=&\frac{\hbar^2}{2}\sum_{j=0}^{n+1}\sum_{s,\alpha}(2j)\cdot(-1)^{-s-1}e_{n+1-j}(-s-\frac{3}{2}+b^{\alpha},...,-s-\frac{3}{2}+n+b^{\alpha})\\&\hspace{30pt}\cdot\left(\sum_{g\geq0}\hbar^{2g-2}(-1)^{g}\langle\langle\tau_{s}(\phi_{\alpha});\lambda_{g}\rangle\rangle^{X}_{g}\right)
\left(\sum_{g\geq0}\hbar^{2g-2}(-1)^{g}\langle\langle\tau_{-s-1+n-j}( c_1(X)^{j-1}\cup\phi^{\alpha});\lambda_{g}\rangle\rangle^{X}_{g}\right).
\end{align*}
For the remaining three terms in $L_n$, since these are all multiplication operator, then we have
\begin{align*}
\widetilde{\mathcal{D}}^{-1}\sum_{i=6}^{8}L_n^{[i]}\widetilde{\mathcal{D}}=&\frac{n+1}{\hbar^2}\sum_{\alpha,\beta}t_0^{\alpha} t_0^{\beta}\int_{X}c_1(X)^{n}\cup\phi_\alpha\cup\phi_\beta
+\frac{1}{\hbar^2}\sum_{\alpha,\beta}t_0^{\alpha} u_0^{\beta}\int_{X}c_1(X)^{n+1}\cup\phi_\alpha\cup\phi_\beta
\nonumber\\&+\frac{\delta_{n,0}}{24}\int_{X}\left((-d)c_d(X)-2c_1(X)\cup c_{d-1}(X)\right).    
\end{align*}
 Then the  theorem follows on combining all these above formulas.
\end{proof}

Motivated by Virasoro conjecture for Gromov-Witten invariants of $Y$ and Thereom~\ref{thm:D-1Ln-D-formula}, 
we propose another conjectural universal constraints for Hodge integrals of $X$ with one $\lambda_{g-1}$ or $\lambda_{g}$ insertion.
\begin{conjecture}For any smooth projective variety $X$,
\begin{align*}
\Psi_{g,n}\equiv0\,\,  \text{for all }\, g\geq0, n\geq-1. 
\end{align*}
\end{conjecture}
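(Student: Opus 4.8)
The plan is to deduce the vanishing of $\Psi_{g,n}$ from the fiber degree-$0$ Virasoro conjecture for $Y=X\times\mathbb{P}^1$, which is precisely the mechanism that Theorem~\ref{thm:D-1Ln-D-formula} is built to supply. The starting point is the master identity
\begin{align*}
\widetilde{\mathcal{D}}^{-1}L_n\widetilde{\mathcal{D}}=\sum_{g\geq0}\hbar^{2g-2}(-1)^{g}\left((-2)\Psi_{g,n}+\sum_{m,\beta}u_m^\beta\Theta_{g,n,m,\beta}\right).
\end{align*}
The crucial structural observation, read off from Proposition~\ref{prop:degree-0=total-Hodge}, is that $\log\widetilde{\mathcal{D}}$ is at most linear in the variables $\mathbf{u}=(u_m^\beta)$, so the right-hand side is a polynomial of degree $\le 1$ in $\mathbf{u}$ whose coefficients $\Psi_{g,n}$ and $\Theta_{g,n,m,\beta}$ are formal power series in $\mathbf{t}$ alone. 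Consequently, once one knows that the left-hand side vanishes identically, comparing the $\mathbf{u}$-independent coefficient of each power $\hbar^{2g-2}$ forces $\Psi_{g,n}\equiv0$, while comparing the coefficient of $u_m^\beta\hbar^{2g-2}$ simultaneously yields $\Theta_{g,n,m,\beta}\equiv0$, i.e.\ Conjecture~\ref{conj:lambda-g}. In short, the identical extraction that proves Theorem~\ref{thm:virasoro-lambda-g-conj-rela} also delivers the present statement for free.

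First I would reduce everything to the fiber degree-$0$ Virasoro conjecture $\widetilde{\mathcal{D}}^{-1}L_n\widetilde{\mathcal{D}}=0$ for $n\ge-1$. To obtain this I would invoke the known compatibility of the Virasoro constraints with products together with the established case of $\mathbb{P}^1$: since Virasoro holds for $\mathbb{P}^1$, the product construction yields the full Virasoro conjecture $L_n\mathcal{D}^{Y}=0$ whenever it holds for $X$. It then remains to pass from the full potential $\mathcal{D}^{Y}$ to its fiber degree-$0$ part $\widetilde{\mathcal{D}}$. For this I would grade $\mathcal{D}^{Y}$ by the $\mathbb{P}^1$-degree (the $H_2(\mathbb{P}^1;\mathbb{Z})$-component of the curve class) and verify that each summand $L_n^{[i]}$ of the operator in Proposition~\ref{prop:Ln-expression} is homogeneous with respect to this grading; the lowest graded piece of $L_n\mathcal{D}^{Y}=0$ then reads $L_n\widetilde{\mathcal{D}}=0$, using that the coordinates $\mathbf{t},\mathbf{u}$ carry no Novikov degree while the quadratic boundary terms $L_n^{[4]},L_n^{[5]}$ split the degree additively. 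With this in hand, the derivative computations of Lemma~\ref{lem:1-order-der-degree-0=total-Hodge} legitimize the coefficient comparison above.

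The main obstacle is precisely this last reduction, namely establishing the fiber degree-$0$ Virasoro conjecture for $Y$ \emph{unconditionally}. Because the Virasoro conjecture for a general target variety is still open, the statement must remain a conjecture in full generality; it becomes a theorem exactly in the range where Virasoro for $X$ is known, that is, smooth projective varieties with semisimple quantum cohomology and smooth algebraic curves. A secondary, purely bookkeeping difficulty is to check the grading-homogeneity of the operators $L_n^{[i]}$ carefully enough that extraction of the fiber degree-$0$ component commutes with the exponential $\widetilde{\mathcal{D}}=\exp(\sum_{g}\hbar^{2g-2}\widetilde{F}_g)$; once this compatibility is secured, the separation of $\hbar$-powers and $\mathbf{u}$-degrees is routine and $\Psi_{g,n}\equiv0$ follows in tandem with the $\lambda_{g}$ conjecture.
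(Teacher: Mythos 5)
Your proposal is correct and coincides with the paper's own treatment of this statement: the paper deliberately leaves it as a conjecture (for exactly the reason you identify, namely that an unconditional proof would require the open Virasoro conjecture for $X$), and where it does establish $\Psi_{g,n}=0$ — for targets with semisimple quantum cohomology or curves — it uses precisely your mechanism: Virasoro for $Y=X\times\mathbb{P}^1$ via \cite{MR4707267}, restriction to the fiber degree-$0$ part, and extraction of the $\mathbf{u}$-independent coefficient of each power of $\hbar$ in Theorem~\ref{thm:D-1Ln-D-formula}. The only refinement you could have added is that $\Psi_{0,n}\equiv0$ holds unconditionally, since genus-zero Virasoro constraints are known for every smooth projective variety \cite{liu1998virasoro}; this is how the paper proves Theorem~\ref{thm:lambda-g-conj-g=0}.
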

\subsection{Proof of Theorem~\ref{thm:virasoro-lambda-g-conj-rela} and \ref{thm:lambda-g-conj-g=0}}
Assume Virasoro conjecture for Gromov-Witten invaraints of $X$ holds, such as smooth projective varieties with semisimple quanum cohomology (cf. \cite{teleman2012structure}) or smooth algebraic curves (cf. \cite{okounkov2006virasoro}), 
then by results in \cite{MR4707267}, Virasoro conjecture for Gromov-Witten invaraints of $Y=X\times\mathbb{P}^1$ holds. In particular, the fiber degree-0 Virasoro constraints for Gromov-Witten invaraints of $Y=X\times\mathbb{P}^1$ holds, i.e. $\widetilde{\mathcal{D}}^{-1}L_n\widetilde{\mathcal{D}}\equiv0$ for any $n\geq-1$. By Theorem~\ref{thm:D-1Ln-D-formula}, this is equivalent to 
\begin{align*}
\Theta_{g,n,m,\beta}=0, \quad \text{and}\quad \Psi_{g,n}=0    
\end{align*}
for any $g,n,m,\beta$.
So we prove Theorem~\ref{thm:virasoro-lambda-g-conj-rela}. 

In genus-0, it was proven in \cite{liu1998virasoro} that Virasoro constraints for $Y=X\times\mathbb{P}^1$ always hold for any target varities $X$. In particular, the genus-0 fiber degree-0 Virasoro constraints for $Y=X\times\mathbb{P}^1$  hold.  That is $[\hbar^{-2}]\widetilde{\mathcal{D}}^{-1}L_n\widetilde{\mathcal{D}}=0, n\geq-1$. By Theorem~\ref{thm:D-1Ln-D-formula}, $\Psi_{0,n}=0$ and $\Theta_{0,n,m\beta}=0$ for any $n,m,\beta$. So we prove Theorem~\ref{thm:lambda-g-conj-g=0}. 
\section{Givental type formula for $\lambda_g$ ancestor Gromov-Witten potential}\label{sec:Givental type formula}
\subsection{Basic properties of Hodge bundle}
Recall the following  important properties of the Hodge bundle on the moduli space of stable curves (cf. \cite{mumford1983towards}):
\begin{itemize}
    \item Pulling back via the gluing map $\iota_{g_1,g_2}\colon \overline{\mathcal{M}}_{g_1,n_1+1}\times \overline{\mathcal{M}}_{g_2,n_2+1}\rightarrow \overline{\mathcal{M}}_{g,n}$, we have
    \begin{align}
    \label{eqn:prop-Hodge-1}\iota_{g_1,g_2}^*\mathbb{E}_g\cong p_1^*\mathbb{E}_{g_1}\oplus p_2^*\mathbb{E}_{g_2}\end{align}
    where $p_i (i=1,2)$ denotes via the projection maps from $\overline{\mathcal{M}}_{g_1,n_1+1}\times \overline{\mathcal{M}}_{g_2,n_2+1}$ onto its factors.
    \item Pulling back via the gluing map: $\iota_{g-1}\colon \overline{\mathcal{M}}_{g-1,n+1}\rightarrow \overline{\mathcal{M}}_{g,n}$, we have
    \begin{align}
    \label{eqn:prop-Hodge-2}\iota_{g-1}^*\mathbb{E}_g\cong \mathbb{E}_{g-1}\oplus\mathcal{O}\end{align}
    where $\mathcal{O}$ is the structure sheaf.
\end{itemize}
Then considering total Chern class $c(\mathbb{E})$ of Hodge bundle, we have
\begin{align*}
\iota_{g_1,g_2}^*c(\mathbb{E}_{g})=c(\mathbb{E}_{g_1})\cdot c(\mathbb{E}_{g_2}),\quad\text{and} \quad \iota_{g-1}^*c(\mathbb{E}_g)=c(\mathbb{E}_{g-1}).    
\end{align*}
\subsection{Semi-simple cohomological field theories}
In this section, we review Givental-Teleman classification for semi-simple cohomological field theories and refer the reader to \cite{givental2001semisimple}, \cite{teleman2012structure} and \cite{pandharipande2015relations} for more details.

A  stable graph $\Gamma=(V,H,E,L,g,p,\iota,m)$ of genus $g$ with $n$ legs consists of the following data:
\begin{enumerate}
\item a finite set of vertices $V$  with a genus function $g\colon V\to \mathbb{Z}_{\geq 0}$;

\item a finite set of half-edges $H$  with a vertex assignment $p\colon H\to V$ and an involution $\iota\colon H\to H$;

\item a set of edges $E$, which is the set of two-point orbits of $\iota$;

\item a set of legs $L$, which is the set of fixed points of $\iota$, and which is marked by a bijection $m\colon\{1,\ldots,n\}\to L$.

\item The graph $(V,E)$ is connected.

\item For every vertex $v\in V$, we have
\begin{equation*}
  2g(v)-2+n(v) > 0,
\end{equation*}
where $n(v)=|  p^{-1}(v)|$ is the {\it valence} of the vertex $v$
\item The genus of the graph is $g$, in the sense that
\begin{equation*}\label{eq:genus}
g=h^1(\Gamma)+\sum_{v\in V}g(v),
\end{equation*}
where $h^1(\Gamma)= |E|- |V|+1$.
\end{enumerate}

An automorphism of a stable graph $\Gamma$ consists of permutations of the sets $V$ and $H$ which
leave invariant the structures  $g$, $p$, $\iota$ and $m$ (and hence preserve $L$ and $E$). We denote by $\Aut(\Gamma)$ the automorphism group of $\Gamma$.

Recall that a cohomological field theory (CohFT), first defined by
 Kontsevich and Manin (cf. \cite{kontsevich1994gromov}), consists of a finite-dimensional $\mathbb{C}$-vector
 space $V$ equipped with a non-degenerate pairing $\eta$, a distinguished element $\mathbf{1}\in V$, and a system of homomorphisms $\Omega_{g,n}: V^{\otimes n}\rightarrow H^*(\overline{\mathcal{M}}_{g,n};\mathbb{C})$ satisfying a number of compatibility axioms.  A
 quantum product $*$ on $V$ can be defined for any CohFT yields  by $\eta(v_1*v_2,v_3)=\Omega_{0,3}(v_1,v_2,v_3)$. We call  a CohFT  semisimple if there exists a canonical basis $\{e_i\}$ of $V$ such that $e_i*e_j=\delta_{ij}e_i$ and $\sum_{i}e_i=\mathbf{1}$. 

 The (upper half of the) symplectic loop group corresponding to a vector space
 $V$ with non-degenerate pairing $\eta$ is the group of endomorphism valued power
 series $V[[z]]$ such that the symplectic condition $R(z)R^*(-z)=Id$ holds. Here $R^*(z)$ is the adjoint of $R(z)$ with respect to $\eta$. 

In 2012, Teleman proved that a semisimple
 CohFT $\Omega$ can be reconstructed from its topological part $\omega$ via an $R$ matrix action,  where $\omega$ is the projection of $\Omega$ to $H^0(\overline{\mathcal{M}}_{g,n})$ (cf. \cite{teleman2012structure}). More precisely, for any vector $w_1,..,w_n\in V$,
 \begin{align}\label{eqn:tele-recontr}
 \Omega_{g,n}(w_1,...,w_n)=\sum_{\Gamma\in G_{g,n}}\frac{1}{|\Aut(\Gamma)|}\xi_{\Gamma,*}\left(\prod_{v}\sum_{k=0}^{\infty}\frac{1}{k!}\pi_{*}\omega_{g(v),n(v)+k}(\cdots)\right)    
 \end{align}
  where  $G_{g,n}$ is the set of  stable  graphs of genus $g$ with $n$ legs, and $\xi_{\Gamma}:\prod_{v}\overline{\mathcal{M}}_{g(v),n(v)}\rightarrow \overline{\mathcal{M}}_{g,n}$ is the gluing map of curves of topological
 type $\Gamma$ from their irreducible components, $\pi: \overline{\mathcal{M}}_{g(v),n(v)+k}\rightarrow \overline{\mathcal{M}}_{g(v),n(v)}$
 forgets the last $k$ markings. Then  we specify what is to put into the
 insertions of $\omega_{g(v),n(v)+k}$. Instead of allowing only vectors in $V$ to be
 put into $\Omega_{g,n}$, we will allow elements of $V[[\psi_1,...,\psi_n]]$, where $\psi_i$ acts on
 the cohomology of the relevant moduli space of curves by multiplication
 with the $i$-th cotangent line class.
 \begin{enumerate}
     \item  Into each insertion corresponding to $i$-th marking, put
 $R^{-1}(\psi_i)$ applied to the corresponding vector $w_i$.
 \item  Into each pair of insertions corresponding to an edge put the bi-vector 
\[\frac{Id\otimes Id-R^{-1}(\psi_1)\otimes R^{-1}(\psi_2)}{\psi_1+\psi_2}\eta^{-1}\]
 where one has to substitute the $\psi$-classes at each side of the normalization of the node for $\psi_1$ and $\psi_2$. By the symplectic condition this
 is well-defined.
 \item At each of the additional arguments for each vertex put
 \[T(\psi)=\psi(Id-R^{-1}(\psi))\mathbf{1}\]
  where $\psi$ is the cotangent line class corresponding to the additional marking.
 Since $T(z) = O(z^2)$ the above $k$-sum is finite.
 \end{enumerate}
\subsection{Reconstruction formula for $\lambda_{g}$ invariants of varieties at semisimple point}\label{subsec:recons-lambda-g-invariants}
Consider the stablization morphism
\[\St\colon \overline{\mathcal{M}}_{g,n}(X,B)\rightarrow \overline{\mathcal{M}}_{g,n}\]
which forgets the map $f$ in $(C; x_1,\dots,x_n; f ) \in  \overline{\mathcal{M}}_{g,n}(X,B)$ and stabilizes the curve
$(C; x_1,\dots,x_n)$
by contracting unstable components to points. 
Let
\[\pi_k\colon \overline{\mathcal{M}}_{g,n+k}(X,B)\rightarrow \overline{\mathcal{M}}_{g,n}(X,B)\]
be the morphism, which forgets the last $k$ markings, and stabilizes the curve.
For $t\in H^*(X;\mathbb{C})$, we define a CohFT associated to Gromov-Witten invariants of $X$
\begin{align*}
\Omega_{g,n}^{t}(\phi_{\alpha_1},...,\phi_{\alpha_n}):=\sum_{k\ge0}\frac{1}{k!}\sum_{B\in H_2(X;\mathbb{Z})}q^B\St_{*}\pi_{k*}\left(\prod_{i=1}^{n}ev_i^*\phi_{\alpha_i}
\prod_{i=n+1}^{n+k}ev_i^*t\cap[\overline{\mathcal{M}}_{g,n+k}(X,B)]_{vir}\right).    
\end{align*}

If we times $\lambda_{g}$ class, we get the following reconstruction theorem  
\begin{theorem}\label{thm:lambda-g-times-CohFT}
If $t\in H^*(X;\mathbb{C})$ is a semisimple point of the  CohFT associated to Gromov-Witten theory of $X$, there exist $R$ matrix, such that   
for any vector $w_1,..,w_n\in H^*(X;\mathbb{C})$,
 \begin{align}
\label{eqn:lambda-g-times-Omega} \lambda_{g}\cdot\Omega^{t}_{g,n}(w_1,...,w_n)=\sum_{\Gamma\in G_{g,n}^{c}}\frac{1}{|\Aut(\Gamma)|}\xi_{\Gamma,*}\left(\prod_{v}\sum_{k=0}^{\infty}\frac{1}{k!}\pi_{k*}\left(\lambda_{g(v)}\cdot\omega^{t}_{g(v),n(v)+k}(\cdots)\right)\right)    
 \end{align}
 where $G_{g,n}^{c}\subset G_{g,n}$ is the set of  stable graphs which is a tree.  $\omega^{t}$ is the topological part of $\Omega^{t}$. All the insertions $(...)$ are the same as in equation~\eqref{eqn:tele-recontr}.
\end{theorem}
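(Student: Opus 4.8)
The plan is to derive \eqref{eqn:lambda-g-times-Omega} directly from Teleman's reconstruction \eqref{eqn:tele-recontr} for the semisimple CohFT $\Omega^{t}$, taking $R$ to be the same $R$ matrix furnished by the Givental--Teleman classification. Multiplying \eqref{eqn:tele-recontr} by $\lambda_g$ and applying the projection formula to each gluing map $\xi_\Gamma$ gives
\begin{align*}
\lambda_g\cdot\Omega^t_{g,n}(w_1,\dots,w_n)=\sum_{\Gamma\in G_{g,n}}\frac{1}{|\Aut(\Gamma)|}\xi_{\Gamma,*}\left(\xi_\Gamma^*\lambda_g\cdot\prod_v\sum_{k=0}^\infty\frac{1}{k!}\pi_{k*}\,\omega^t_{g(v),n(v)+k}(\cdots)\right).
\end{align*}
Everything then reduces to computing $\xi_\Gamma^*\lambda_g$ and showing it annihilates every graph carrying a loop.

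For the Hodge-bundle pullback I would first establish that, for any stable graph $\Gamma$,
\begin{align*}
\xi_\Gamma^*\mathbb{E}_g\cong\bigoplus_v\mathrm{pr}_v^*\mathbb{E}_{g(v)}\oplus\mathcal{O}^{\oplus h^1(\Gamma)},
\end{align*}
where $\mathrm{pr}_v$ is the projection onto the factor $\overline{\mathcal{M}}_{g(v),n(v)}$. This follows by choosing a spanning tree of $\Gamma$ and factoring $\xi_\Gamma$ into elementary gluings: the $|V|-1$ spanning-tree edges glue separate curves, so \eqref{eqn:prop-Hodge-1} applies and contributes no trivial summand, while the remaining $h^1(\Gamma)=|E|-|V|+1$ edges are self-gluings, so \eqref{eqn:prop-Hodge-2} applies and each contributes one $\mathcal{O}$. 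Since $c(\mathcal{O})=1$, taking total Chern classes gives $\xi_\Gamma^*c(\mathbb{E}_g)=\prod_v\mathrm{pr}_v^*c(\mathbb{E}_{g(v)})$, whose top nonzero degree equals $\sum_v g(v)=g-h^1(\Gamma)$. Extracting the degree-$g$ part therefore yields $\xi_\Gamma^*\lambda_g=0$ whenever $h^1(\Gamma)>0$, and $\xi_\Gamma^*\lambda_g=\prod_v\mathrm{pr}_v^*\lambda_{g(v)}$ when $\Gamma$ is a tree; in particular only the tree graphs $\Gamma\in G_{g,n}^{c}$ survive in the sum.

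It remains to redistribute $\prod_v\mathrm{pr}_v^*\lambda_{g(v)}$ into the individual vertex contributions on a tree. Because the Hodge bundle is pulled back along the forgetful morphism, $\pi_k^*\lambda_{g(v)}=\lambda_{g(v)}$, and each factor $\mathrm{pr}_v^*\lambda_{g(v)}$ involves only the $v$-th factor of $\prod_v\overline{\mathcal{M}}_{g(v),n(v)}$. Applying the projection formula to each $\pi_k$ then moves $\lambda_{g(v)}$ through the pushforward,
\begin{align*}
\lambda_{g(v)}\cdot\sum_{k=0}^\infty\frac{1}{k!}\pi_{k*}\,\omega^t_{g(v),n(v)+k}(\cdots)=\sum_{k=0}^\infty\frac{1}{k!}\pi_{k*}\left(\lambda_{g(v)}\cdot\omega^t_{g(v),n(v)+k}(\cdots)\right),
\end{align*}
and combining this vertex by vertex over each $\Gamma\in G_{g,n}^{c}$ recovers exactly \eqref{eqn:lambda-g-times-Omega}.

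The only step demanding real care is the Hodge-bundle pullback formula under the general gluing $\xi_\Gamma$: one must verify that iterating \eqref{eqn:prop-Hodge-1} and \eqref{eqn:prop-Hodge-2} is insensitive to the order in which edges are glued, and that the trivial summands assemble into precisely $\mathcal{O}^{\oplus h^1(\Gamma)}$, consistent with the genus relation $g=h^1(\Gamma)+\sum_v g(v)$. Granting this, the degree count forcing the vanishing on non-tree boundary strata and the compatibility of $\lambda_{g(v)}$ with the forgetful pushforwards are both routine, so the essential content of the theorem is this Hodge-theoretic vanishing rather than any further input on the $R$ matrix.
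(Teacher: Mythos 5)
Your proposal is correct and follows essentially the same route as the paper, whose proof is the one-line observation that the claim follows from Teleman's reconstruction \eqref{eqn:tele-recontr} together with the Hodge-bundle gluing properties \eqref{eqn:prop-Hodge-1} and \eqref{eqn:prop-Hodge-2}; your write-up simply makes explicit the steps the paper leaves implicit (the factorization of $\xi_\Gamma$ into elementary gluings, the degree count killing graphs with $h^1(\Gamma)>0$, and the projection formula combined with $\pi_k^*\lambda_{g(v)}=\lambda_{g(v)}$ to move $\lambda_{g(v)}$ inside the forgetful pushforwards). No gap: the detailed expansion is exactly what the paper's cited ingredients amount to.
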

\begin{proof}
This just follows from \eqref{eqn:tele-recontr}, and properties of Hodge bundle~\eqref{eqn:prop-Hodge-1} and \eqref{eqn:prop-Hodge-2}.      
\end{proof}

For $\mathbf{t}=\sum_{n,\alpha}t_n^\alpha z^n\phi_\alpha\in H^*(X;\mathbb{C})[[\{t_n^\alpha\}]][[z]]$, we define 
  genus $g$ ancestor potential of $\lambda_{g}$ invariants at base point $t$ by
  \begin{align*}
  \bar{\mathcal{F}}_{g}^{\lambda_{g}}(t,\mathbf{t})=\sum_{n>2-2g}\sum_{k\geq0}\sum_{B\in H_2(X;\mathbb{Z})}\frac{q^B}{k!}\int_{[\overline{\mathcal{M}}_{g,n+k}(X,d)]_{vir}}\lambda_{g}\prod_{i=1}^{n}ev_i^*\mathbf{t}(\bar{\psi}_i)\prod_{i=n+1}^{n+k}ev_i^{*}t
  \end{align*}
  where $\bar{\psi}_i=\pi_k^*\St^*c_1(L_i)$ with $L_i$ is the cotangent line bundle along the $i$-th marked point over $\overline{\mathcal{M}}_{g,n}$. 
 The total ancestor potential of $\lambda_{g}$ invariants that depends on the choice of $t$ is defined by 
	\begin{align}
	\mathcal{A}_{t}^{\lambda}(\hbar,\mathbf{t})=\exp\left(\sum_{g\geq0}\hbar^{2g-2}\bar{\mathcal{F}}_{g}^{\lambda_{g}}(t,\mathbf{t})\right).  
	\end{align}

In below, we would like use Theorem~\ref{thm:lambda-g-times-CohFT} to reconstruct the total ancestor  potential $\mathcal{A}_{t}^{\lambda}(\hbar,\mathbf{t})$ of $\lambda_{g}$ invariants in terms of $\lambda_{g}$ invariants of a point.  
 
For convenience, we denote the non-degenerate intersection paring $\eta$ on $H^*(X;\mathbb{C})$ by $\langle\,,\,\rangle$.	If $t\in H^*(X;\mathbb{C})$ is a semisimple point of the associated CohFT, assume $\{e_i=e_i(t)\}_{i=1}^{N}$ is the canonical basis and $\{e^i=e^i(t)\}_{i=1}^{N}$ is their dual basis and we denote the  norm of canonical basis by ${\Delta_i}^{-1}=\langle e_i,e_i\rangle$. 
	Given a stable tree graph $\Gamma\in G_{g,n}^{c}$, we introduce  a set $LA(\Gamma)$ which consists of the following labels:
	\begin{enumerate}
		\item (marking) $i: V(\Gamma) \to \{1,\cdots,N\}$. This induces
		$i:H(\Gamma)\to \{1,\cdots,N\}$, as follows:
		if $h\in H(\Gamma)$ is a leaf attached to a vertex $v\in V(\Gamma)$,
		define $i(h)=i(v)$.
		\item (height) $a: H(\Gamma)\to \mathbb{Z}_{\geq 0}$.
	\end{enumerate}
     
	Then we assign contributions to legs, edges, and vertices of a stable tree graph $\Gamma\in G^c_{g,n}$ as follows.
\begin{enumerate}
		\item {\em Legs.} To each leg $l \in L(\Gamma)$ with  $i(l)= i\in \{1,\cdots,N\}$
		and  $a(l)= a\in \mathbb{Z}_{\geq 0}$, place:
	\[[z^{a(l)}]\langle R(z)^{-1}\mathbf{t},e^{i(l)}\rangle.\]
 Here we use $[z^a]$ to denote taking the coefficient of $z^a$.
		\item {\em Edges.} To an edge $e\in E(\Gamma)$ connected a vertex marked by $i\in \{1,\cdots,N\}$ to a vertex
		marked by $j\in \{1,\cdots,N\}$ and with heights $a$ and $b$ at the corresponding half-edges, place
		$$
		[z^{a(h)} w^{a(h')}]\sum_{\alpha}\left(\frac{1}{z+w}\left(\langle \phi_\alpha,e^{i(h)}\rangle\langle \phi^\alpha,e^{i(h')}\rangle-\langle R(z)^{-1}\phi_\alpha,e^{i(h)}\rangle\langle R(w)^{-1}\phi^\alpha,e^{i(h')}\rangle\right)\right),$$
  	where $h,h'$ are the two half edges associated to $e$. In fact, the order of the two half edges does not affect the graph sum formula. 
		\item {\em Vertices.} To a vertex $v$ with genus $g(v)$,	marking $i(v)$, 
 and $n(v)$  half-edges attached to it with heights $a_1, ..., a_{n(v)} \in \mathbb{Z}_{\geq 0}$, place
\begin{align*}
&{\Delta_{i(v)}}^{g(v)-1}\sum_{k\geq0}\frac{1}{k!}\sum_{b_1,...,b_k\geq2}
 \left(\prod_{j=1}^{k}[z^{b_j}]\langle z\left(Id-R(z)^{-1}\right)1,e^{i(v)}\rangle\right)
 \\&\hspace{60pt}\cdot\int_{\overline{\mathcal{M}}_{g(v),n(v)+k}}
\lambda_{g(v)}\prod_{\{h:p(h)=v\}}\psi_{h}^{a(h)}
\prod_{j=1}^{k}\psi_{n(v)+j}^{b_j}.    
\end{align*}
By degree reason, this is actually a finite sum over $m$. 	\end{enumerate}
 \begin{corollary}\label{cor:total-ancestor-potential-lambda-g}
At a semisimple point $t\in H^*(X;\mathbb{C})$, the total ancestor potential of $\lambda_{g}$ invariants has the following graph sum formula
\begin{small}
	\begin{align}	\label{eqn:ancestor-graph-sum-formula}&\mathcal{A}^{\lambda}_t(\hbar,\mathbf{t}) 
\nonumber \\=& \exp\Bigg(\sum_{2g-2+n>0}\frac{\hbar^{2g-2}}{n!} \sum_{\Gamma\in G^{c}_{g,n}}\sum_{(a,i)\in LA(\Gamma)}\frac{1}{|\Aut(\Gamma)|}
 \nonumber\\&\hspace{30pt}\prod_{v}{\Delta_{i(v)}}^{g(v)-1}\sum_{k\geq0}\frac{1}{k!}\sum_{b_1,...,b_k\geq2}
 \left(\prod_{j=1}^{k}[z^{b_j}]\langle z\left(Id-R(z)^{-1}\right)1,e^{i(v)}\rangle\right)
 \nonumber\\&\hspace{160pt}\cdot\int_{\overline{\mathcal{M}}_{g(v),n(v)+k}}
\lambda_{g(v)}\prod_{\{h:p(h)=v\}}\psi_{h}^{a(h)}
\prod_{j=1}^{k}\psi_{n(v)+j}^{b_j} 
 \nonumber\\&\hspace{25pt}\prod_{e=(h,h')}[z^{a(h)} w^{a(h')}]\sum_{\alpha}\left(\frac{1}{z+w}\left(\langle \phi_\alpha,e^{i(h)}\rangle\langle \phi^\alpha,e^{i(h')}\rangle-\langle R(z)^{-1}\phi_\alpha,e^{i(h)}\rangle\langle R(w)^{-1}\phi^\alpha,e^{i(h')}\rangle\right)\right)
 \nonumber\\&\hspace{30pt}\prod_{l}[z^{a(l)}]\langle R(z)^{-1}\mathbf{t},e^{i(l)}\rangle\Bigg)
	.\end{align}
    
\end{small}
     
 \end{corollary}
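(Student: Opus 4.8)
\section*{Proof proposal}

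The plan is to read off the graph sum formula directly from the reconstruction Theorem~\ref{thm:lambda-g-times-CohFT}, treating Corollary~\ref{cor:total-ancestor-potential-lambda-g} as the ancestor-potential repackaging of that identity at a fixed semisimple point $t$. First I would rewrite the genus-$g$ ancestor potential $\bar{\mathcal{F}}_{g}^{\lambda_{g}}(t,\mathbf{t})$ in terms of the CohFT classes $\Omega^{t}_{g,n}$. By the definition of $\Omega^{t}_{g,n}$ as $\St_{*}\pi_{k*}(\cdots)$, the projection formula, and the fact that the Hodge class $\lambda_{g}$ on $\overline{\mathcal{M}}_{g,n+k}(X,B)$ is the pullback $\pi_{k}^{*}\St^{*}\lambda_{g}$ of the Hodge class on $\overline{\mathcal{M}}_{g,n}$, one obtains
\[
\bar{\mathcal{F}}_{g}^{\lambda_{g}}(t,\mathbf{t})=\sum_{n}\frac{1}{n!}\int_{\overline{\mathcal{M}}_{g,n}}\lambda_{g}\cdot\Omega^{t}_{g,n}\bigl(\mathbf{t}(\psi_1),\dots,\mathbf{t}(\psi_n)\bigr),
\]
where the $z$-variable in $\mathbf{t}=\sum_{n,\alpha}t_n^\alpha z^n\phi_\alpha$ is specialized to the cotangent class, the ancestor class $\bar\psi_i$ descending to $\psi_i$ on $\overline{\mathcal{M}}_{g,n}$ under the pushforward by the projection formula.

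Next I would substitute the right-hand side of \eqref{eqn:lambda-g-times-Omega} for $\lambda_{g}\cdot\Omega^{t}_{g,n}$. Since $G^{c}_{g,n}$ consists only of trees, each summand factorizes over the vertices, edges, and legs of $\Gamma$, and the integral against $\prod_i\mathbf{t}(\psi_i)$ distributes onto the legs. The key computational step is to evaluate the three kinds of insertions of \eqref{eqn:tele-recontr} in the canonical idempotent basis $\{e_i\}$: the leg insertion $R^{-1}(\psi)\,\mathbf{t}$, paired with the dual idempotent, yields $[z^{a(l)}]\langle R(z)^{-1}\mathbf{t},e^{i(l)}\rangle$; the edge bi-vector, expanded against $\{\phi_\alpha\}$, $\{\phi^\alpha\}$ and paired with the two dual idempotents, yields the propagator of rule (2); and at each vertex the topological part $\omega^{t}$ of a semisimple CohFT is diagonal in $\{e_i\}$ with normalization $\Delta_{i(v)}^{g(v)-1}$ (as $\omega_{0,2}$ gives $\langle e_i,e_i\rangle=\Delta_i^{-1}$ and $\omega_{0,3}(e_i,e_i,e_i)=\Delta_i^{-1}$), so the vertex integral collapses to $\Delta_{i(v)}^{g(v)-1}\int_{\overline{\mathcal{M}}_{g(v),n(v)+k}}\lambda_{g(v)}\prod\psi^{a}\prod\psi^{b}$. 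The dilaton shift $T(\psi)=\psi(Id-R^{-1}(\psi))\mathbf{1}$, inserted at each of the $k$ additional markings, supplies exactly the factors $[z^{b_j}]\langle z(Id-R(z)^{-1})1,e^{i(v)}\rangle$ with $b_j\geq2$, the truncation coming from $T(z)=O(z^2)$.

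Finally, since $\sum_{g}\hbar^{2g-2}\bar{\mathcal{F}}_{g}^{\lambda_{g}}$ is the connected generating function and the total ancestor potential is by definition $\mathcal{A}^{\lambda}_t(\hbar,\mathbf{t})=\exp\bigl(\sum_{g}\hbar^{2g-2}\bar{\mathcal{F}}_{g}^{\lambda_{g}}\bigr)$, assembling the per-graph contributions over all connected trees $\Gamma\in G^{c}_{g,n}$ and exponentiating yields precisely \eqref{eqn:ancestor-graph-sum-formula}; here one checks that the symmetry factors $\tfrac{1}{n!}$, $\tfrac{1}{k!}$ and $\tfrac{1}{|\Aut(\Gamma)|}$ are exactly those already carried by Theorem~\ref{thm:lambda-g-times-CohFT} and the definition of $\bar{\mathcal{F}}_{g}^{\lambda_{g}}$. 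I expect the main obstacle to be purely bookkeeping: matching the abstract $R$-matrix insertion rules of \eqref{eqn:tele-recontr} with the explicit canonical-basis contributions, and in particular verifying that the weights $\Delta_{i(v)}^{g(v)-1}$ from the diagonalized topological CohFT and the height bound $b_j\geq2$ from the dilaton shift appear with the correct multiplicities. The genuinely nontrivial geometric input---that only tree graphs contribute, because $\lambda_{g}$ restricts to zero on the non-separating boundary by \eqref{eqn:prop-Hodge-2}---is already supplied by Theorem~\ref{thm:lambda-g-times-CohFT}.
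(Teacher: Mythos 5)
Your proposal is correct and follows essentially the same route as the paper: both identify the $\lambda_g$-twisted ancestor brackets with integrals of $\lambda_{g}\prod_i\psi_i^{k_i}\cdot\Omega^{t}_{g,n}$ over $\overline{\mathcal{M}}_{g,n}$ (using that $\lambda_g$ pulls back from the moduli of curves), substitute the reconstruction formula \eqref{eqn:lambda-g-times-Omega} of Theorem~\ref{thm:lambda-g-times-CohFT}, evaluate the topological part in the idempotent basis as $\omega^t_{g,n}(e_i,\dots,e_i)=\Delta_i^{g-1}$, and exponentiate. The only difference is that you spell out the leg/edge/vertex bookkeeping that the paper leaves implicit in its final sentence.
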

 \begin{proof}
 For each element $t\in H^*(X;\mathbb{C})$, it is useful to introduce the ancestor  double bracket notation:
	\begin{align*}
	\langle\langle\phi_{\alpha_1}\bar{\psi}_1^{k_1},...,\phi_{\alpha_n}\bar{\psi}_n^{k_n}
	\rangle\rangle_{g,n}(t):=\sum_{k\geq0}\sum_{B\in H_2(X;\mathbb{Z})}\frac{q^B}{k!}\int_{[\overline{\mathcal{M}}_{g,n+k}(X,B)]_{vir}}\lambda_{g}\prod_{i=1}^{n}ev_i^*\mathbf{t}(\bar{\psi}_i)\prod_{i=n+1}^{n+k}ev_i^{*}t.    
	\end{align*}
where $\bar{\psi}_i=\pi_k^*\St^*c_1(L_i)$ with $L_i$ is the cotangent line bundle along the $i$-th marked point over $\overline{\mathcal{M}}_{g,n}$.

 By definition of CohFT $\{\Omega^t_{g,n}\}$, it is easy to see
 \begin{align}\label{eqn:double-braket-int-lamb-psi-Omega} \langle\langle\phi_{\alpha_1}\bar{\psi}_1^{k_1},...,\phi_{\alpha_n}\bar{\psi}_n^{k_n}	\rangle\rangle_{g,n}(t)=\int_{\overline{\mathcal{M}}_{g,n}}\lambda_{g}\prod_{i=1}^{n}\psi_{i}^{k_i}\cdot\Omega^t_{g,n}(\phi_{\alpha_1},...,\phi_{\alpha_n})    
 \end{align}
and  the total ancestor potential of $\lambda_{g}$ invariants
 \begin{align}
\label{eqn:ancestor-exp-}	\mathcal{A}_{t}^{\lambda}(\hbar,\mathbf{t})=\exp\left(\sum_{2g-2+n>0}\frac{\hbar^{2g-2}}{n!}\langle\langle\mathbf{t}(\bar{\psi}_1),...,\mathbf{t}(\bar{\psi}_n);\lambda_{g}\rangle\rangle_{g,n}(t)\right).  
	\end{align}
For the topological field theory part $\omega^t$ of CohFT $\Omega^{t}$, via the axioms of CohFT, it is easy to compute
\begin{align}\label{eqn:omega-t-e-...-e}
\omega^t_{g,n}(e_{i_1},...,e_{i_n})
=
\begin{cases}
{\Delta_{i}}^{g-1},\, i_1=...=i_n=i\\
0,\, \quad\quad\text{others}
\end{cases}
\end{align}
Then the graphs sum formula of ancestor potential~\eqref{eqn:ancestor-graph-sum-formula} follows from equations~\eqref{eqn:lambda-g-times-Omega}, \eqref{eqn:double-braket-int-lamb-psi-Omega}, \eqref{eqn:ancestor-exp-} and \eqref{eqn:omega-t-e-...-e}.    
 \end{proof}


The main difference between Givental formula for ancestor Gromov-Witten potential of $X$ and ancestor  potential of $\lambda_{g}$ invariants is the type of the graphs in the sum: the former one is over all possible stable graphs, while the latter one is required to be stable trees. The second difference is about the vertex contributions with explicit formula for $\lambda_{g}$ integrals (cf. \cite{faber2000hodge}). This will make it much simpler to compute $\lambda_{g}$ invariants than pure ancestor Gromov-Witten invariants.

It is remarkable that this type of graph sum formula in Theorem~\ref{thm:lambda-g-times-CohFT}  also appears in \cite{arsie2023semisimple}, which is defined for more general semisimple  F-CohFTs. It is very natural to ask whether $\lambda_{g}$ conjecture holds true for general semisimple  F-CohFTs or semisimple flat F-manifolds. We postpone to study  this interesting problem in the  future.

\section{New univeral constraints for desendant Gromov-Witten invariants}
\label{sec:new-universal-gw-pixconj}

\subsection{Pixton's formula for double ramification cycles}
Let $A=(a_1,\ldots,a_n)$ be a vector of double ramification data,
i.e.\ a vector of integers such that $\sum_{i=1}^na_i=0$.
Let $\Gamma \in G_{g,n}$ be a stable graph of genus $g$ with $n$ legs
and $r$ be a positive integer.
A weighting mod $r$ of $\Gamma$ is a function on the set of
half-edges,
$$ w\colon H(\Gamma) \rightarrow \{0,\dots,r-1\},$$
which satisfies the following three properties:
\begin{enumerate}
\item[(i)] $\forall h_i\in L(\Gamma)$, corresponding to
 the marking $i\in \{1,\ldots, n\}$,
$$w(h_i) \equiv a_i \mod r \ ,$$
\item[(ii)] $\forall e \in E(\Gamma)$, corresponding to two half-edges
$h,h' \in H(\Gamma)$,
$$w(h)+w(h') \equiv 0 \mod r\, ,$$
\item[(iii)] $\forall v\in V(\Gamma)$,
$$\sum_{v(h)= v} w(h) \equiv 0 \mod r\, ,$$ 
where the sum is taken over all  half-edges incident to $v$.
\end{enumerate}
Let $\mathsf{W}_{\Gamma,r}$ be the set of weightings mod $r$
of $\Gamma$. The set $\mathsf{W}_{\Gamma,r}$ is finite, with cardinality $r^{h^1(\Gamma)}$.
For each stable graph $\Gamma\in G_{g,n}$, we associate  a moduli space $\overline{\mathcal{M}}_\Gamma:=\prod_{v\in V(\Gamma)}\overline{\mathcal{M}}_{g(v),n(v)}$ and define a natural map $\xi_{\Gamma}: \overline{\mathcal{M}}_\Gamma \to 
\overline{\mathcal{M}}_{g,n}$ to be the  canonical gluing morphism.

We denote by
$\mathsf{P}_g^{d,r}(A)\in R^d(\overline{\mathcal{M}}_{g,n})$ the degree $d$ component of the tautological class 
\begin{align*}
\hspace{-5pt}\sum_{\Gamma\in {G}_{g,n}} 
\sum_{w\in \mathsf{W}_{\Gamma,r}}
\frac{1}{|\Aut(\Gamma)| } 
\frac{1}{r^{h^1(\Gamma)}}
\xi_{\Gamma*}\Bigg[
\prod_{i=1}^n \exp(a_i^2 \psi_{h_i}) \cdot 
\prod_{e=(h,h')\in E(\Gamma)}
\frac{1-\exp(-w(h)w(h')(\psi_h+\psi_{h'}))}{\psi_h + \psi_{h'}} \Bigg]
\end{align*} 
in $R^*(\overline{\mathcal{M}}_{g,n})$.

Pixton proved for fixed $g$, $A$, and $d$, the \label{pply}
class
$\mathsf{P}_g^{d,r}(A) \in R^d(\overline{\mathcal{M}}_{g,n})$
is polynomial in $r$ for $r$ sufficiently large. 
We denote by $\mathsf{P}_g^d(A)$ the value at $r=0$ 
of the polynomial associated to $\mathsf{P}_g^{d,r}(A)$. In other words, $\mathsf{P}_g^d(A)$ is the  constant term of the associated polynomial in $r$. 

The following formula for double ramification cycles was first conjectured
by Pixton.
\begin{theorem}[\cite{janda2017double}] \label{thm:DR-Pixton}
For $g\geq 0$ and double ramification data $A$, we have
$$\mathsf{DR}_g(A) = 2^{-g}\, \mathsf{P}_g^g(A)\, \in R^g(\overline{\mathcal{M}}_{g,n}).$$ 
\end{theorem}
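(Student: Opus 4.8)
The plan is to prove Theorem~\ref{thm:DR-Pixton} by realizing the double ramification cycle and Pixton's class $\mathsf{P}_g^g(A)$ as two descriptions of the same limit of classes living on moduli spaces of $r$-th roots, following the strategy of \cite{janda2017double}. First I would recall the geometric definition of $\mathsf{DR}_g(A)$: over the locus of curves of compact type it is Hain's class, while over all of $\overline{\mathcal{M}}_{g,n}$ it is the pushforward
\[ \mathsf{DR}_g(A) = \epsilon_*\,[\overline{\mathcal{M}}^{\sim}_{g,A}(\mathbb{P}^1)]_{vir}, \]
where $\overline{\mathcal{M}}^{\sim}_{g,A}(\mathbb{P}^1)$ is J.~Li's moduli space of stable maps to rubber $\mathbb{P}^1$ whose ramification over $0$ and $\infty$ is prescribed by the positive and negative parts of $A$, and $\epsilon$ forgets the map and stabilizes the domain. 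A dimension count shows the pushforward lands in codimension $g$, consistent with $\mathsf{P}_g^g(A)\in R^g(\overline{\mathcal{M}}_{g,n})$.

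Next I would introduce, for each sufficiently large integer $r$, the proper moduli stack of $r$-th roots in the sense of Chiodo: pairs $(C,x_1,\dots,x_n;L)$ with $L^{\otimes r}\cong \mathcal{O}_C(\textstyle\sum_i a_i x_i)$, suitably twisted at the nodes. Let $\pi\colon \overline{\mathcal{M}}^{r}_{g,n}(A)\to \overline{\mathcal{M}}_{g,n}$ be the finite forgetful morphism and $\mathcal{L}$ the universal root. Chiodo's formula computes the pushforward under $\pi$ of the degree-$g$ component $c_g(-R\pi'_*\mathcal{L})$, with $\pi'$ the universal curve, explicitly as a sum over stable graphs decorated by weightings mod $r$; this sum is term-by-term identical to the defining expression of $\mathsf{P}_g^{g,r}(A)$, with the leg factors $\exp(a_i^2\psi_{h_i})$, the edge factors $\frac{1-\exp(-w(h)w(h')(\psi_h+\psi_{h'}))}{\psi_h+\psi_{h'}}$, and the averaging $r^{-h^1(\Gamma)}$ all appearing. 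Since the polynomiality of $\mathsf{P}_g^{g,r}(A)$ in $r$ is already available, it suffices to identify the geometric class with $\mathsf{P}_g^{g,r}(A)$ for all large $r$ and then specialize to the constant term at $r=0$.

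The geometric heart of the argument is to connect $\mathsf{DR}_g(A)$ with this root construction. I would study stable maps to the orbifold target obtained by taking the $r$-th root stack of $\mathbb{P}^1$ along $0$ and $\infty$ (equivalently a weighted projective stack), carrying the natural $\mathbb{C}^*$-action, and apply the virtual localization formula. The $\mathbb{C}^*$-fixed loci are indexed by bipartite stable graphs with vertices distributed over $0$ and $\infty$; their contributions are built from Hodge and $\psi$-class integrals together with the $r$-twists of the universal root at the nodes and markings. The crucial input is that the resulting equivariant class has no poles in the equivariant parameter: forcing the coefficients of the offending powers to vanish produces the sought identity between the codimension-$g$ part of the localization sum and the Chiodo class, and extracting the $r=0$ constant term then yields $\mathsf{DR}_g(A)=2^{-g}\mathsf{P}_g^g(A)$.

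The hard part will be the precise combinatorial matching: one must verify that, after the non-equivariant limit, the vertex contributions (Hodge integrals over the fixed loci) and the edge-smoothing factors reorganize exactly into the leg, edge, and vertex weights defining $\mathsf{P}_g^{g,r}(A)$, including the correct power $2^{-g}$ arising from the rubber/root normalization. Equally delicate is controlling the dependence on $r$ uniformly over all stable graphs so that the geometric class is genuinely polynomial in $r$ and the interchange of the $r\to 0$ limit with the pushforward $\pi_*$ is justified; this is precisely where Pixton's polynomiality statement recorded above becomes indispensable.
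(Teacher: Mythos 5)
The paper contains no proof of this statement at all: it is imported verbatim from \cite{janda2017double} (Theorem~\ref{thm:DR-Pixton} is stated with a citation), so there is no internal argument to compare against. Your sketch faithfully reconstructs the strategy of that reference --- the rubber-maps definition of $\mathsf{DR}_g(A)$, $\mathbb{C}^*$-localization for stable maps to a root stack of $\mathbb{P}^1$, Chiodo's formula for the pushforward of the Chern class of the universal $r$-th root, Pixton's polynomiality in $r$, and extraction of the constant term at $r=0$ with the normalization $2^{-g}$ --- modulo small details (in \cite{janda2017double} the root construction is applied at $0$ only, with the rubber/relative geometry at $\infty$), though as written it remains a strategy outline that defers the decisive localization and combinatorial computations rather than a self-contained proof.
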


The top Chern class $\lambda_g$ of the Hodge bundle $\mathbb{E}$ is a very
special case of a double ramification cycle (see
\cite[\S0.5.3]{janda2017double}):
\begin{equation}
\label{eqn:DR-lambda-g}\mathsf{DR}_g(0,\ldots,0) = (-1)^g\lambda_g \ \in R^g(\overline{\mathcal{M}}_{g,n}).\    
\end{equation}

Combining Theorem~\ref{thm:DR-Pixton} and \eqref{eqn:DR-lambda-g}, it
leads to a nice formula for the top Chern class of Hodge bundle
$\lambda_g$, which is supported on the boundary divisor of curves with
a non-separating node.
That is, in $R^{g}(\overline{\mathcal{M}}_{g,n})$, we have
\begin{equation}\label{eqn:lambda-g-formula}
  \lambda_g=\frac{(-1)^g}{2^g} \mathsf{P}_{g}^{g}(0,\dots,0) .
\end{equation} 
\subsection{Descendant and ancestor correspondence }
We first recall some standard results about the correspondence between descendant and ancestor invariants (cf. \cite[Appendix 2]{coates2007quantum}). 

For $1\leq i\leq n$, denote the  cotangent line bundle  along the $i$-th marked point over $\overline{\mathcal{M}}_{g,n}$ and $\overline{\mathcal{M}}_{g,n+m}(X,B)$ by $L_i$ and $\mathcal{L}_i$, respectively.
The bundles $\mathcal{L}_i$ and $\pi_m^*\St^*L_i$  over
$\overline{\mathcal{M}}_{g,n+m}(X,B)$ are identified outside the locus $D$ consisting of maps such that the $i$-th marked
point is situated on a component of the curve which gets collapsed by $\St \circ \pi_m$.
This locus $D$ is the image of the gluing map
\begin{align*}
   \iota\colon: \sqcup_{\substack{m_1+m_2=m\\ B_1+B_2=B}} \overline{\mathcal{M}}_{0,\{i\}+\bullet+m_1}(X,B_1)\times_{X} \overline{\mathcal{M}}_{g,[n]\setminus\{i\}+\circ+m_2}(X,B_2)\rightarrow \overline{\mathcal{M}}_{g,n+m}(X,B)
\end{align*}
where the two markings $\bullet$ and $\circ$ are glued together under map $\iota$.
We denote the domain of this map by $Y_{n,m,B}^{(i)}$. The virtual normal bundle to $D$ at a generic
point is $\Hom(\pi_m^*\St^*c_1(L_i), \mathcal{L}_i)$, and so $D$ is ``virtually Poincar\'e-dual'' to $c_1(\mathcal{L}_i)-\pi_m^*\St^*c_1(L_i)$ in the sense that
\begin{align}
\label{eqn:D-Y-vir}  \left(c_1(\mathcal{L}_i)-\pi_m^*\St^*c_1(L_i)\right) \cap [\overline{\mathcal{M}}_{n+m}{(X,B)}]^{\vir}=\iota_*[Y_{n,m,B}^{(i)}]^{\vir}
\end{align}

Define mixed type of ancestor and descendant  twisted correlator as follows:
\begin{align}
&\langle\langle\bar{\tau}_{j_1}\tau_{i_1}(\phi_1),\dots,\bar{\tau}_{j_n}\tau_{i_n}(\phi_n);c_{g,n}\rangle\rangle_g(\mathbf{t}(\psi))
\nonumber\\&:=\sum_{m\geq0}\sum_{B\in H_2(X;\mathbb{Z})}\frac{q^B}{m!}
\int_{[\overline{\mathcal{M}}_{g,n+m}(X,B)]^{vir}} \prod_{k=1}^{n}\left(\bar{\psi}_{k}^{j_k}\psi_k^{i_k}\ev_k^*\phi_k\right)\prod_{k=n+1}^{n+m}\ev_k^*\mathbf{t}(\psi_{k})\cdot \pi_m^* \St^*c_{g,n}.
\end{align}
for certain cohomology class $c_{g,n}\in H^*(\overline{\mathcal{M}}_{g,n})$, where $\psi_k:=c_1(\mathcal{L}_k)$ and $\bar{\psi}_k:=\pi_m^*\St^*c_1(L_k)$. 
Below we only focus the cases of $c_{g,n}=1$ or $\lambda_g$. 

Recall 
 the following $T$ operator which was studied in
\cite{liu2002quantum}
\begin{equation}
  \label{eq:T-operator}
  T(\mathcal{W}):=\tau_+(\mathcal{W})-\sum_{\alpha}\langle\langle \mathcal{W}\phi^\alpha\rangle\rangle_0\phi_\alpha
\end{equation}
for any vector field $\mathcal{W}$ on the big phase space, where $\tau_+(\mathcal{W})$  is a linear operator defined
by $\tau_{+}(\tau_n(\phi_\alpha))=\tau_{n+1}(\phi_\alpha)$. 

The following lemma was proved in \cite{janda2024universal}:
\begin{lemma}\label{lem:anc-desc}
For any $2g-2+n>0$,  on the big phase space, we have
\begin{align*}
\langle\langle\bar\tau_{j_1}\tau_{i_1}(\phi_{\alpha_1}),\dots,\bar\tau_{j_n}\tau_{i_n}(\phi_{\alpha_n});\lambda_g\rangle\rangle_g =   \langle\langle  T^{j_1}(\tau_{i_1}(\phi_{\alpha_1})),\dots,  T^{j_n}(\tau_{i_n}(\phi_{\alpha_n}));\lambda_g\rangle\rangle_g 
\end{align*}
for any $\{\phi_{\alpha_i}: i=1,\dots,n\}\subset H^*(X;\mathbb{C})$ and $T$ is the operator from \eqref{eq:T-operator}.
In particular,
\begin{align}\label{eqn:anc-des-2}
\langle\langle\bar\tau_{j_1}(\phi_{\alpha_1}),\dots,\bar\tau_{j_n}(\phi_{\alpha_n});\lambda_g\rangle\rangle_g =   \langle\langle  T^{j_1}(\phi_{\alpha_1}),\dots,  T^{j_n}(\phi_{\alpha_n});\lambda_g\rangle\rangle_g. 
\end{align}
Note that by \eqref{eqn:lambda-g-formula}, these two equations also hold true if we replace $\lambda_g$ by $\mathsf{P}_g^g(0,..,0)$. 
\end{lemma}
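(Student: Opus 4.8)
The plan is to prove the ancestor--descendant comparison by induction on the total ancestor weight $\sum_k j_k$, converting the ancestor classes $\bar\psi_k$ into descendant classes one marking at a time through the single-step recursion
\[
\langle\langle\ldots,\bar\tau_{j}(\mathcal{W}),\ldots;\lambda_g\rangle\rangle_g=\langle\langle\ldots,\bar\tau_{j-1}(T(\mathcal{W})),\ldots;\lambda_g\rangle\rangle_g,
\]
valid for an arbitrary vector field $\mathcal{W}$ on the big phase space and $j\geq 1$. Granting this, iterating it $j_k$ times at the $k$-th marking and then passing to the next marking turns each $\bar\tau_{j_k}\tau_{i_k}(\phi_{\alpha_k})$ into $T^{j_k}(\tau_{i_k}(\phi_{\alpha_k}))$, which is the assertion of the lemma. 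The ``in particular'' statement \eqref{eqn:anc-des-2} is the special case $i_k=0$, and the final remark is immediate since both sides are linear in the inserted tautological class and \eqref{eqn:lambda-g-formula} identifies $\lambda_g$ with $(-1)^g 2^{-g}\mathsf{P}_g^g(0,\dots,0)$.

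First I would establish the single-step recursion. Writing $\bar\psi_k^{\,j}=\bar\psi_k^{\,j-1}\cdot\bar\psi_k$ and using the virtual comparison \eqref{eqn:D-Y-vir} in the form $\bar\psi_k=\psi_k-D_k$, where $D_k=\iota_*[Y^{(k)}_{n,m,B}]^{\vir}$ is the rational--tails divisor on which the $k$-th marking sits on a genus-$0$ component contracted by $\St\circ\pi_m$, I would split the integrand into a descendant part $\bar\psi_k^{\,j-1}\psi_k^{\,i+1}$ and a boundary part $-\bar\psi_k^{\,j-1}\psi_k^{\,i}\cdot D_k$. The descendant part reproduces the $\tau_+$ summand of $T$ in \eqref{eq:T-operator}. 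For the boundary part I would restrict to $D_k\cong\overline{\mathcal{M}}_{0,\{k\}+\bullet+m_1}(X,B_1)\times_X\overline{\mathcal{M}}_{g,\ldots+\circ+m_2}(X,B_2)$, using that $\psi_k$ restricts to the descendant class at the marking $k$ on the genus-$0$ tail while $\bar\psi_k$ restricts to the ancestor class at the node $\circ$ on the genus-$g$ component. The fiber product over $X$ contributes $\sum_\alpha\phi_\alpha\otimes\phi^\alpha$ through $\eta^{-1}$, and summing over the distribution of the $t$-markings and over $B_1+B_2=B$ factorizes the boundary contribution into the genus-$0$ double bracket $\langle\langle\mathcal{W}\phi^\alpha\rangle\rangle_0$ and a genus-$g$ ancestor bracket with $\phi_\alpha$ inserted at the node, producing exactly the subtraction term $-\sum_\alpha\langle\langle\mathcal{W}\phi^\alpha\rangle\rangle_0\,\bar\tau_{j-1}(\phi_\alpha)$ of $T$.

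The role of $\lambda_g$ enters only through its behaviour under the gluing $\iota$. Since the tail has genus $0$, the splitting \eqref{eqn:prop-Hodge-1} of the Hodge bundle gives $\iota^*\mathbb{E}_g\cong p_1^*\mathbb{E}_0\oplus p_2^*\mathbb{E}_g$ with $\mathbb{E}_0=0$, so $\lambda_g$ restricts to $\lambda_g$ on the genus-$g$ factor and to $\lambda_0=1$ on the tail. This is precisely what makes the genus-$0$ correction factor carry no Hodge insertion while the surviving genus-$g$ bracket retains the full $\lambda_g$, matching the structure of $T$. The main obstacle I anticipate is the bookkeeping in the second step: verifying that $\bar\psi_k|_{D_k}$ is genuinely the ancestor (not the naive descendant) psi-class at the node of the genus-$g$ component, so that the recursion closes onto $\bar\tau_{j-1}$ rather than a mixed class, and checking that no excess-intersection terms arise from the powers $\bar\psi_k^{\,j-1}$ meeting $D_k$. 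Once these identifications are in place, linearity of $T$ lets the one-step recursion be applied to the $\mathbf{t}$-dependent vector fields produced at intermediate stages, and the induction runs to completion.
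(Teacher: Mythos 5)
Your argument is correct, but there is essentially nothing in the paper to compare it with: the paper does not prove this lemma at all, it quotes it from \cite{janda2024universal}, and your comparison-divisor induction is precisely the standard argument behind that reference (and behind the descendant/ancestor correspondence of \cite[Appendix 2]{coates2007quantum} that the paper recalls just before the statement). The two checkpoints you flag as potential obstacles are genuine but resolve exactly as you anticipate. There are no excess-intersection corrections because \eqref{eqn:D-Y-vir} is an identity of virtual homology classes; capping with $\bar{\psi}_k^{\,j-1}\psi_k^{\,i}$ and using the projection formula gives
\begin{align*}
\bar{\psi}_k^{\,j-1}\psi_k^{\,i}\left(\psi_k-\bar{\psi}_k\right)\cap[\overline{\mathcal{M}}_{g,n+m}(X,B)]^{\vir}
=\iota_*\left(\iota^*\left(\bar{\psi}_k^{\,j-1}\psi_k^{\,i}\right)\cap[Y^{(k)}_{n,m,B}]^{\vir}\right)
\end{align*}
with no further terms. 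Moreover $\iota^*\bar{\psi}_k$ is indeed the ancestor class at the node $\circ$ of the genus-$g$ factor, because $\St\circ\pi_m\circ\iota$ contracts the rational tail and hence factors through the partial stabilization map of that factor; for the same reason the inserted class $\pi_m^*\St^*\lambda_g$ restricts to the corresponding ancestor $\lambda_g$-class on the genus-$g$ factor and trivially on the tail. This is the form of your Hodge-bundle argument that is actually needed here, since the splitting \eqref{eqn:prop-Hodge-1} is stated in the paper only over $\overline{\mathcal{M}}_{g,n}$; it transfers to moduli of maps because $\mathbb{E}$ there is pulled back from the stack of prestable curves. With these identifications your one-step recursion holds, the function-linearity of the double bracket and of $T$ closes the induction, and the extension to $\mathsf{P}_g^g(0,\dots,0)$ is, as you say, immediate from \eqref{eqn:lambda-g-formula} and linearity in the inserted tautological class.
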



For any $g, n$ such that $2g - 2 + n > 0$ and $\mathbf{t}=\sum_{n,\alpha}t_n^\alpha z^n\phi_\alpha\in H^*(X)[[\{t_n^\alpha\}]][[z]]$, define a homomorphism $\Omega^{\mathbf{t}}_{g,n}\colon H^*(X)[[\psi]]^{\otimes n}\rightarrow H^*(\overline{\mathcal{M}}_{g,n})[[q]][[\{t_n^\alpha\}]]$ via
\begin{align*}
\Omega^{\mathbf{t}}_{g,n}(\phi_{\alpha_1}\psi_1^{i_1},\dots,\phi_{\alpha_n}\psi_n^{i_n})  
=\sum_{m\geq0}\sum_{B\in H_2(X;\mathbb{Z})}\frac{q^B}{m!}\St_*{\pi_m}_*\left(\prod_{k=1}^{n}\ev_k^*\phi_{\alpha_k}\psi_k^{i_k}\prod_{k=n+1}^{n+m}\ev_i^*\mathbf{t}(\psi_{k})\right).
\end{align*}
From the ``cutting edges'' axiom of virtual cycles of moduli of stable maps to $X$, this system of homomorphisms satisfies the splitting axioms:
\begin{align}\label{eqn:split-ax-1}
\iota_{g_1,g_2}^*\Omega_{g,n}^{\mathbf{t}}(\phi_{\alpha_1}\psi_1^{i_1},\dots,\phi_{\alpha_n}\psi_{n}^{i_n})
=\sum_{\beta}\Omega_{g_1,n_1+1}^{\mathbf{t}}(\otimes_{k\in I_1}\phi_{\alpha_k}\psi_k^{i_k},\phi_\beta)\otimes \Omega_{g_2,n_2+1}^{\mathbf{t}}(\otimes_{k\in I_2}\phi_{\alpha_k}\psi_k^{i_k}\otimes \phi^\beta)
\end{align}
and
\begin{align}\label{eqn:split-ax-2}
\iota_{g-1}^*\Omega_{g,n}^{\mathbf{t}}(\phi_{\alpha_1}\psi_1^{i_1},\dots,\phi_{\alpha_n}\psi_n^{i_n})
=\sum_{\beta}\Omega^{\mathbf{t}}_{g-1,n+2}(\phi_{\alpha_1}\psi_1^{i_1},\dots,\phi_{\alpha_n}\psi_n^{i_n},\phi_\beta,\phi^\beta)
\end{align}
where $\iota_{g_1,g_2}\colon \overline{\mathcal{M}}_{g_1,n_1+1}\times \overline{\mathcal{M}}_{g_2,n_2+1}\rightarrow \overline{\mathcal{M}}_{g,n}$ and $\iota_{g-1}\colon \overline{\mathcal{M}}_{g-1,n+2}\rightarrow \overline{\mathcal{M}}_{g,n}$ are the two canonical gluing maps.
Note that, by definition, $\Omega$ is related to the double bracket of descendant potential via integration on the moduli space of curve, 
i.e.
\begin{align*}
\int_{\overline{\mathcal{M}}_{g,n}}\Omega_{g,n}^{\mathbf{t}}(v_1\psi_1^{i_1},\dots,v_n\psi_{n}^{i_n})
=\langle\langle\tau_{i_1}(v_1),\dots,\tau_{i_n}(v_n)\rangle\rangle_g(\mathbf{t}).
\end{align*}

\subsection{Proof of Theorem~\ref{thm:new-type-uni-constr-descen-gw}}
\label{subsec:proof-of-thm1.5}
In this subsection, we explain how to use Pixton's formula to rewrite each  factor of the form $\langle\langle \mathcal{W}_1\ldots\mathcal{W}_k;\lambda_{g}\rangle\rangle_g$ (with $k=1, 2$) in the right hand side of equation~\eqref{eqn:Theta-formula} in terms of pure descendant Gromov-Witten invariants.
For convenience, let $\mathsf{P}^g_{\Gamma}(0,...,0)$ denote the constant term of the following polynomial in $r$ for sufficiently large $r$
\begin{align}\label{eqn:PgGamma00}
\mathsf{P}^g_{\Gamma}(0,...,0):=\text{Coeff}_{r^{0}}
\left[\sum_{w\in \mathsf{W}_{\Gamma,r}}
\frac{1}{|\Aut(\Gamma)| } 
\frac{1}{r^{h^1(\Gamma)}}
\prod_{e=(h,h')\in E(\Gamma)}
\frac{1-\exp(-w(h)w(h')(\psi_h+\psi_{h'}))}{\psi_h + \psi_{h'}}\right]
\end{align}
for any $\Gamma\in G_{g,k}$.
We may view $\mathsf{P}^g_{\Gamma}(0,...,0)$ as a polynomial in the
$\psi$-classes $\{\psi_h:h\in H(\Gamma)\}$.

We next define a linear operator $P \mapsto \langle\langle \mathcal{W}_1\ldots\mathcal{W}_k; P\rangle\rangle_\Gamma$ that turns
any polynomial in the $\psi$-classes $\{\psi_h:h\in H(\Gamma)\}$ into a product of correlation functions.
Given a monomial $P = c \cdot \prod_{h\in H(\Gamma)} \psi_h^{a_h}$, we let
$\langle\langle \mathcal{W}_1\ldots\mathcal{W}_k; P\rangle\rangle_\Gamma$ to be the contraction of the multilinear form
\begin{equation*}
  c \prod_{v\in V(\Gamma)} \langle\langle\ldots\rangle\rangle_{g(v)}
\end{equation*}
with arguments corresponding to half-edges $h \in H(\Gamma)$ via:
\begin{itemize}
\item for the argument corresponding to the $i$-th leg, use the vector $\mathcal{W}_i$;
\item for each edge $e = (h,  h')$, use the bivector $T^{a_h}(\phi_\alpha)\otimes T^{a_{h'}}(\phi^\alpha)$ for the corresponding arguments.
\end{itemize}
\begin{proposition}\label{prop:TaTb}
For $k\geq1$,
\begin{align}\label{eqn:eqn:W-P=W-P-Gamma}
\langle\langle\mathcal{W}_1\ldots\mathcal{W}_k;\mathsf{P}_{g}^{g}(0)\rangle\rangle_g
=\sum_{\Gamma\in {G}_{g,k}} \langle\langle\mathcal{W}_1\ldots\mathcal{W}_k; \mathsf{P}^g_{\Gamma}(0)\rangle\rangle_\Gamma.
\end{align}
\end{proposition}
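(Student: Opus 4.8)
The plan is to unfold the definition of $\mathsf{P}_g^g(0,\dots,0)$ given in Section~\ref{sec:new-universal-gw-pixconj} and match it term-by-term against the right-hand side of \eqref{eqn:eqn:W-P=W-P-Gamma}. Recall that $\mathsf{P}_g^g(0,\dots,0)$ is the degree-$g$ component (equivalently, the $r^0$ coefficient of the polynomial in $r$) of the tautological class obtained by summing $\xi_{\Gamma,*}$ of a product of edge factors $\tfrac{1-\exp(-w(h)w(h')(\psi_h+\psi_{h'}))}{\psi_h+\psi_{h'}}$ over all stable graphs $\Gamma\in G_{g,k}$ and all weightings $w\in\mathsf{W}_{\Gamma,r}$ (the leg factors $\exp(a_i^2\psi_{h_i})$ being trivial since all $a_i=0$). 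Thus by definition
\[
\mathsf{P}_g^g(0,\dots,0)=\sum_{\Gamma\in G_{g,k}}\xi_{\Gamma,*}\,\mathsf{P}_\Gamma^g(0,\dots,0),
\]
where $\mathsf{P}_\Gamma^g(0,\dots,0)$ is exactly the per-graph polynomial in the $\psi$-classes defined in \eqref{eqn:PgGamma00}.

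First I would insert this graph-sum expression into the left-hand correlator $\langle\langle\mathcal{W}_1\ldots\mathcal{W}_k;\mathsf{P}_g^g(0)\rangle\rangle_g$. Since the correlator is defined by capping $\mathsf{P}_g^g(0)$ against the virtual class pulled back via $\pi_m^*\St^*$ and integrating, linearity lets me pass the sum over $\Gamma$ outside, reducing the claim to showing that for each fixed $\Gamma$,
\[
\langle\langle\mathcal{W}_1\ldots\mathcal{W}_k;\xi_{\Gamma,*}\,\mathsf{P}_\Gamma^g(0)\rangle\rangle_g
=\langle\langle\mathcal{W}_1\ldots\mathcal{W}_k;\mathsf{P}_\Gamma^g(0)\rangle\rangle_\Gamma.
\]
The key input here is the push-pull / projection formula for the gluing map $\xi_\Gamma$ together with the splitting axioms \eqref{eqn:split-ax-1} and \eqref{eqn:split-ax-2} for the system $\Omega^{\mathbf{t}}$. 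Pulling back $\Omega^{\mathbf{t}}_{g,n}$ along $\xi_\Gamma$ factors it into a tensor product of vertex contributions $\prod_v \Omega^{\mathbf{t}}_{g(v),n(v)}$, with each edge of $\Gamma$ contributing a resolution of the identity $\sum_\beta \phi_\beta\otimes\phi^\beta$ across the two half-edges. This is precisely the combinatorial structure encoded in the right-hand definition of $\langle\langle\cdots;\mathsf{P}_\Gamma^g(0)\rangle\rangle_\Gamma$.

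The step that requires genuine care — the main obstacle — is the translation of the $\psi$-class monomials $\prod_h\psi_h^{a_h}$ appearing in $\mathsf{P}_\Gamma^g(0)$ into the descendant insertions $T^{a_h}(\phi_\alpha)\otimes T^{a_{h'}}(\phi^\alpha)$ at each edge. The $\psi_h$-classes here are the cotangent classes at the nodes \emph{on $\overline{\mathcal{M}}_{g,n}$}, i.e.\ ancestor classes, whereas the correlator is built from descendant insertions on the moduli of stable maps. Bridging this is exactly the content of the ancestor–descendant correspondence, Lemma~\ref{lem:anc-desc} (and its remark that the identities persist when $\lambda_g$ is replaced by $\mathsf{P}_g^g(0,\dots,0)$): each factor $\bar\psi_h^{a_h}$ converts to the operator $T^{a_h}$ applied to the insertion, via \eqref{eqn:anc-des-2}. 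I would apply this correspondence at every half-edge, which produces precisely the bivector $T^{a_h}(\phi_\alpha)\otimes T^{a_{h'}}(\phi^\alpha)$ prescribed in the definition of the $\Gamma$-bracket. Assembling the vertex correlators, the edge resolutions of the identity, and the leg insertions $\mathcal{W}_i$ then yields the contraction described in the statement, completing the identification and hence the proof. A minor bookkeeping point to verify along the way is that the automorphism factors $\tfrac{1}{|\Aut(\Gamma)|}$ and the $r$-dependent normalizations $\tfrac{1}{r^{h^1(\Gamma)}}$ are already absorbed into the definition \eqref{eqn:PgGamma00} of $\mathsf{P}_\Gamma^g(0)$, so no further symmetry factors are introduced when summing over $\Gamma$.
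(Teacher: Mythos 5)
Your proposal is correct and is essentially the paper's own argument: both expand $\mathsf{P}_g^g(0,\dots,0)=\sum_{\Gamma\in G_{g,k}}\xi_{\Gamma*}\mathsf{P}^g_\Gamma(0,\dots,0)$, move the integral to $\overline{\mathcal{M}}_\Gamma$ by the projection formula, factor $\xi_\Gamma^*\Omega^{\mathbf{t}}$ using the splitting axioms~\eqref{eqn:split-ax-1}--\eqref{eqn:split-ax-2}, and convert the half-edge $\psi$-classes into $T$-operator insertions via the ancestor--descendant correspondence of Lemma~\ref{lem:anc-desc}. The only (harmless) organizational difference is that the paper first reduces to $\mathcal{W}_i=T^{a_i}(v_i)$, so that every leg insertion becomes an ancestor class and only the pure-ancestor identity~\eqref{eqn:anc-des-2} is needed, whereas you keep the legs as general descendant insertions inside $\Omega^{\mathbf{t}}$ and therefore invoke the mixed form of the correspondence (the first display of Lemma~\ref{lem:anc-desc}, applied with trivial insertion class) at the vertices.
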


\begin{proof}
It is sufficient to prove equation~\eqref{eqn:eqn:W-P=W-P-Gamma} for $\mathcal{W}_i=T^{a_i}(v_i)$ for any $a_i\geq0$ and $v_i\in H^*(X;\mathbb{C})$.
 By equation~\eqref{eqn:anc-des-2} in Lemma~\ref{lem:anc-desc} and equation~\eqref{eqn:lambda-g-formula}, together with definition of $\Omega^{\mathbf{t}}$, we have
\begin{align*}
&\langle\langle T^{a_1}(v_1)\ldots T^{a_k}(v_k);\mathsf{P}_{g}^{g}(0,..,0)\rangle\rangle_g=\langle\langle \bar{\tau}_{a_1}(v_1)\ldots\bar{\tau}_{a_k}(v_k);\mathsf{P}_{g}^{g}(0,...,0)\rangle\rangle_g \\=&\sum_{m\geq0}\sum_{B\in H_2(X;\mathbb{Z})}\frac{q^B}{m!}
\int_{[\overline{\mathcal{M}}_{g,m+k}(X,B)]^{vir}}\left(\prod_{i=1}^{k}\bar{\psi}_{i}^{a_i}\ev_i^* v_i\right)\prod_{j=k+1}^{k+m}\ev_j^*\mathbf{t}(\psi_{j})\cdot \pi_m^* \St^*\mathsf{P}_{g}^{g}(0,...,0)
\\=&\int_{\overline{\mathcal{M}}_{g,k}}\prod_{i=1}^{k}\psi_i^{a_i}\cdot \Omega^{\mathbf{t}}_{g,k}(v_1,...,v_k)\cdot \mathsf{P}_g^g(0,...,0). 
\end{align*}
By the projection formula, this equals
\begin{align*}
&\sum_{\Gamma\in {G}_{g,k}}\int_{\overline{\mathcal{M}}_{g,k}}\prod_{i=1}^{k}\psi_i^{a_i}\cdot\Omega^{\mathbf{t}}_{g,k}(v_1,...,v_k)\cdot {\xi_{\Gamma}}_*\mathsf{P}^g_{\Gamma}(0,..,0)
\\=&\sum_{\Gamma\in {G}_{g,k}}\int_{\overline{\mathcal{M}}_{\Gamma}}\prod_{i=1}^{k}\psi_i^{a_i}\cdot {\xi_{\Gamma}}^*\Omega^{\mathbf{t}}_{g,k}(v_1,...,v_k)\cdot \mathsf{P}^g_{\Gamma}(0,...,0).
\end{align*}
Then by the splitting axioms~\eqref{eqn:split-ax-1}, \eqref{eqn:split-ax-2} and  equation~\eqref{eqn:PgGamma00}, we can express each summand as a product of descendant correlation functions according to the operator $P \mapsto \langle\langle T^{a_1}(v_1)\ldots T^{a_k}(v_k); P\rangle\rangle_\Gamma$.
In the process, the $k$ marked points are assigned the vectors $v_1,...,v_k$, and the cotangent line class $\psi$ is translated to descendants using the operator $T$.
Each node is translated into a sum of pairs of primary vector fields $\phi_\alpha$ and $\phi^\alpha$.
\end{proof}
Therefore, combining equation~\eqref{eqn:PgGamma00}, \eqref{eqn:eqn:W-P=W-P-Gamma} and Theorem~\ref{thm:virasoro-lambda-g-conj-rela}, we get Theorem~\ref{thm:new-type-uni-constr-descen-gw}.

\subsection{Application: $\lambda_{g}$ constraints in genus one and proof of Theorem~\ref{thm:lambda-g-conj-g=1}}
As an application of Theorem~\ref{thm:new-type-uni-constr-descen-gw}, we will compute $\Theta^{\mathsf{P}}_{1,n,m,\beta}$ explicitly and prove its vanishing. In fact, if genus $g=1$, Pixton DR class on $\overline{\mathcal{M}}_{1,k}$
\begin{align}
  \label{eqn:P_1^1(00)}
\mathsf{P}_{1}^{1}(0,..,0) =-\frac{1}{12} \xi_{\Gamma*}(1)  
\end{align}
where $\Gamma \in G_{1,k}$ is the dual graph of the boundary divisor of singular stable curves
with a non-separating node.

Then by equation~\eqref{eqn:theta-Pixton-class-gnmbeta} and the rule of translating double bracket $\langle\langle\,;\mathsf{P}\rangle\rangle_{\Gamma}$ to double bracket of descendant Gromov-Witten invariants in subsection~\ref{subsec:proof-of-thm1.5}, together with Proposition~\ref{prop:TaTb}, we obtain
\begin{align}\label{eqn:-12thetaPixton-1nmbeta}
&-12\Theta^{\mathsf{P}}_{1,n,m,\beta}
\nonumber\\=    
&\sum_{j=0}^{n+1}\sum_{r,\alpha,\sigma}e_{n+1-j}(r+b_{\alpha}-\frac{1}{2},...,r+n+b_{\alpha}-\frac{1}{2})\tilde{t}_r^{\alpha} 
\langle\langle\tau_{r+n-j}(c_1(X)^{j}\cup\phi_\alpha)\tau_{m}(\phi_\beta)\phi_\sigma\phi^\sigma\rangle\rangle^{X}_{0}
\nonumber\\&+
\sum_{j=0}^{n+1}\sum_{\sigma}e_{n+1-j}(m+b_{\beta}+\frac{1}{2},...,m+n+b_{\beta}+\frac{1}{2})\langle\langle \tau_{m+n-j}(c_1(X)^{j}\cup\phi_{\beta})\phi_\sigma\phi^\sigma\rangle\rangle^{X}_{0}
\nonumber\\&+\sum_{j=0}^{n+1}\sum_{s,\alpha,\sigma}(-1)^{-s-1}e_{n+1-j}(-s+b^{\alpha}-\frac{3}{2},...,-s+n+b^{\alpha}-\frac{3}{2})
\nonumber\\&\quad\quad\cdot\langle\langle\tau_{m}(\phi_\beta)\tau_{-s-1+n-j}(c_1(X)^j\cup\phi^\alpha)\rangle\rangle^{X}_{0}
\langle\langle\tau_{s}(\phi_\alpha)\phi_\sigma\phi^\sigma\rangle\rangle^{X}_{0}
\nonumber\\&+\sum_{j=0}^{n+1}\sum_{s,\alpha,\sigma}(-1)^{-s-1}e_{n+1-j}(-s+b^{\alpha}-\frac{3}{2},...,-s+n+b^{\alpha}-\frac{3}{2})
\nonumber\\&\quad\quad\cdot\langle\langle\tau_{m}(\phi_\beta)\tau_{-s-1+n-j}(c_1(X)^j\cup\phi^\alpha)\phi_\sigma\phi^\sigma\rangle\rangle^{X}_{0}
\langle\langle\tau_{s}(\phi_\alpha)\rangle\rangle^{X}_{0}.
\end{align}
By Theorem~\ref{thm:lambda-g-conj-g=0}, we have
\begin{align*}
&\Theta_{0,n,m,\beta}
\nonumber\\=    
&\sum_{j=0}^{n+1}\sum_{r,\alpha}e_{n+1-j}(r+b_{\alpha}-\frac{1}{2},...,r+n+b_{\alpha}-\frac{1}{2})\tilde{t}_r^{\alpha} 
\langle\langle\tau_{r+n-j}(c_1(X)^{j}\cup\phi_\alpha)\tau_{m}(\phi_\beta)\rangle\rangle^{X}_{0}
\nonumber\\&+
\sum_{j=0}^{n+1}e_{n+1-j}(m+b_{\beta}+\frac{1}{2},...,m+n+b_{\beta}+\frac{1}{2})\langle\langle \tau_{m+n-j}(c_1(X)^{j}\cup\phi_{\beta})\rangle\rangle^{X}_{0}
\nonumber\\&+\sum_{j=0}^{n+1}\sum_{s,\alpha}(-1)^{-s-1}e_{n+1-j}(-s+b^{\alpha}-\frac{3}{2},...,-s+n+b^{\alpha}-\frac{3}{2})
\nonumber\\&\quad\quad\cdot\langle\langle\tau_{m}(\phi_\beta)\tau_{-s-1+n-j}(c_1(X)^j\cup\phi^\alpha)\rangle\rangle^{X}_{0}
\langle\langle\tau_{s}(\phi_\alpha)\rangle\rangle^{X}_{0}
\nonumber\\&+\delta_{m}^{0}\sum_{\alpha}t_0^{\alpha}\int_{X}c_1(X)^{n+1}\cup\phi_\alpha\cup\phi_\beta
\\=&0.
\end{align*}
Taking second derivatives of the above equation along $\frac{\partial^2}{\partial t_0^\sigma\partial t_0^\epsilon}$,
then multiplying $\eta^{\sigma\epsilon}$ and taking sum over $\sigma$ and $\epsilon$, we get
\begin{align}\label{eqn:genus-0theta-second-der}
&\sum_{j=0}^{n+1}\sum_{r,\alpha,\sigma}e_{n+1-j}(r+b_{\alpha}-\frac{1}{2},...,r+n+b_{\alpha}-\frac{1}{2})\tilde{t}_r^{\alpha} 
\langle\langle\tau_{r+n-j}(c_1(X)^{j}\cup\phi_\alpha)\tau_{m}(\phi_\beta)\phi_\sigma\phi^\sigma\rangle\rangle^{X}_{0}
\nonumber\\&+2\sum_{j=0}^{n+1}\sum_{\sigma}e_{n+1-j}(b_{\sigma}-\frac{1}{2},...,n+b_{\sigma}-\frac{1}{2}) 
\langle\langle\tau_{n-j}(c_1(X)^{j}\cup\phi_\sigma)\phi^\sigma\tau_{m}(\phi_\beta)\rangle\rangle^{X}_{0}
\nonumber\\&+
\sum_{j=0}^{n+1}\sum_{\sigma}e_{n+1-j}(m+b_{\beta}+\frac{1}{2},...,m+n+b_{\beta}+\frac{1}{2})\langle\langle \tau_{m+n-j}(c_1(X)^{j}\cup\phi_{\beta})\phi_\sigma\phi^\sigma\rangle\rangle^{X}_{0}
\nonumber\\&+\sum_{j=0}^{n+1}\sum_{s,\alpha,\sigma}(-1)^{-s-1}e_{n+1-j}(-s+b^{\alpha}-\frac{3}{2},...,-s+n+b^{\alpha}-\frac{3}{2})
\nonumber\\&\quad\quad\cdot\langle\langle\tau_{m}(\phi_\beta)\tau_{-s-1+n-j}(c_1(X)^j\cup\phi^\alpha)\phi_\sigma\phi^\sigma\rangle\rangle^{X}_{0}
\langle\langle\tau_{s}(\phi_\alpha)\rangle\rangle^{X}_{0}
\nonumber\\&+2\sum_{j=0}^{n+1}\sum_{s,\alpha,\sigma}(-1)^{-s-1}e_{n+1-j}(-s+b^{\alpha}-\frac{3}{2},...,-s+n+b^{\alpha}-\frac{3}{2})
\nonumber\\&\quad\quad\cdot\langle\langle\tau_{m}(\phi_\beta)\tau_{-s-1+n-j}(c_1(X)^j\cup\phi^\alpha)\phi_\sigma\rangle\rangle^{X}_{0}
\langle\langle\phi^\sigma\tau_{s}(\phi_\alpha)\rangle\rangle^{X}_{0}
\nonumber\\&+\sum_{j=0}^{n+1}\sum_{s,\alpha,\sigma}(-1)^{-s-1}e_{n+1-j}(-s+b^{\alpha}-\frac{3}{2},...,-s+n+b^{\alpha}-\frac{3}{2})
\nonumber\\&\quad\quad\cdot\langle\langle\tau_{m}(\phi_\beta)\tau_{-s-1+n-j}(c_1(X)^j\cup\phi^\alpha)\rangle\rangle^{X}_{0}
\langle\langle\tau_{s}(\phi_\alpha)\phi_\sigma\phi^\sigma\rangle\rangle^{X}_{0}
\nonumber\\=&0.
\end{align}
Combining  equation~\eqref{eqn:-12thetaPixton-1nmbeta} and \eqref{eqn:genus-0theta-second-der}, 
we have
\begin{align}\label{eqn:6theta-g=0-bef-trr}
&6\Theta^{\mathsf{P}}_{1,n,m,\beta}
\nonumber\\=
&\sum_{j=0}^{n+1}\sum_{\sigma}e_{n+1-j}(b_{\sigma}-\frac{1}{2},...,n+b_{\sigma}-\frac{1}{2}) 
\langle\langle\tau_{m}(\phi_\beta)\tau_{n-j}(c_1(X)^{j}\cup\phi_\sigma)\phi^\sigma\rangle\rangle^{X}_{0}
\nonumber\\&+\sum_{j=0}^{n+1}\sum_{s,\alpha,\sigma}(-1)^{-s-1}e_{n+1-j}(-s+b^{\alpha}-\frac{3}{2},...,-s+n+b^{\alpha}-\frac{3}{2})
\nonumber\\&\quad\quad\cdot\langle\langle\tau_{m}(\phi_\beta)\tau_{-s-1+n-j}(c_1(X)^j\cup\phi^\alpha)\phi_\sigma\rangle\rangle^{X}_{0}
\langle\langle\phi^\sigma\tau_{s}(\phi_\alpha)\rangle\rangle^{X}_{0}.
\end{align}
Recall the genus-0 topological recursion relation has the form (cf. \cite{witten1990two}), 
\begin{align*}
\langle\langle\tau_{n+1}(\phi_\alpha)\tau_{m}(\phi_\beta)\phi_{k}(\phi_\gamma)\rangle\rangle_{0}   =\sum_{\sigma}\langle\langle\tau_{n}(\phi_\alpha)\phi_\sigma\rangle\rangle_{0}   \langle\langle\phi^\sigma\tau_{m}(\phi_\beta)\phi_{k}(\phi_\gamma)\rangle\rangle_{0}   
\end{align*}
for any $\alpha, \beta, \gamma$ and $n,m,k\geq0$.

Applying genus-0 topological recursion relation to the second term in the right hand side of equation~\eqref{eqn:6theta-g=0-bef-trr}, we get 
\begin{align*}
&6\Theta^{\mathsf{P}}_{1,n,m,\beta}
\\=&\sum_{j=0}^{n+1}\sum_{\sigma}e_{n+1-j}(b_{\sigma}-\frac{1}{2},...,n+b_{\sigma}-\frac{1}{2}) 
\langle\langle\tau_{m}(\phi_\beta)\tau_{n-j}(c_1(X)^{j}\cup\phi_\sigma)\phi^\sigma\rangle\rangle_{0}
\\&+\sum_{j=0}^{n+1}\sum_{s\geq0,\alpha}(-1)^{-s-1}e_{n+1-j}(-s+b^{\alpha}-\frac{3}{2},...,-s+n+b^{\alpha}-\frac{3}{2})
\nonumber\\&\quad\quad\cdot\langle\langle\tau_{m}(\phi_\beta)\tau_{-s-1+n-j}(c_1(X)^j\cup\phi^\alpha)\tau_{s+1}(\phi_\alpha)\rangle\rangle_{0}
\\=&
\sum_{j=0}^{n}\sum_{s,\alpha}(-1)^{-s}e_{n+1-j}(-s+b^{\alpha}-\frac{1}{2},...,-s+n+b^{\alpha}-\frac{1}{2})
\langle\langle\tau_{m}(\phi_\beta)\tau_{-s+n-j}(c_1(X)^j\cup\phi^\alpha)\tau_{s}(\phi_\alpha)\rangle\rangle_{0}.
\end{align*}
Then taking transformation of index $s=-s'+n-j$ and using the basic fact that $(\mathcal{C}^j)^{\alpha\sigma}\neq0$ implies $j+b^\alpha+b^\sigma=1$, we have
\begin{align*}
&\sum_{j=0}^{n}\sum_{s,\alpha}(-1)^{-s}e_{n+1-j}(-s+b^{\alpha}-\frac{1}{2},...,-s+n+b^{\alpha}-\frac{1}{2})
\langle\langle\tau_{m}(\phi_\beta)\tau_{-s+n-j}(c_1(X)^j\cup\phi^\alpha)\tau_{s}(\phi_\alpha)\rangle\rangle_{0} 
\\=&\sum_{j,s'}\sum_{\alpha,\sigma}(-1)^{-s'+n-j}e_{n+1-j}(s'-n+1-b^\sigma-\frac{1}{2},...,s'+1-b^\sigma-\frac{1}{2})
\\&\hspace{60pt}\cdot
\langle\langle\tau_m(\phi_\beta)\tau_{s'}(\phi_\sigma)\tau_{-s'+n-j}(\phi_\alpha)\rangle\rangle_{0}(\mathcal{C}^j)^{\alpha\sigma}
\\=&-\sum_{j,s}\sum_{\alpha,\sigma}(-1)^{-s}e_{n+1-j}(-s+n+b^\sigma-\frac{1}{2},...,-s+b^\sigma-\frac{1}{2})
\langle\langle\tau_m(\phi_\beta)\tau_{s}(\phi_\sigma)\tau_{-s+n-j}(c_1(X)^j\cup\phi^\sigma)\rangle\rangle_{0}
\\=&0.
\end{align*}
Thus we proved
\begin{align*}
\Theta^{\mathsf{P}}_{1,n,m,\beta}=0.    
\end{align*}
Therefore we finish the proof of Theorem~\ref{thm:lambda-g-conj-g=1}.

~\;

\noindent {\bf  Acknowledgements.}
 The author would  like to thank professor Xiaobo Liu,  Felix Janda, Yunfeng Jiang,  Feng Qu, Hao Xu, Di Yang and Youjin Zhang for discussions about  Virasoro  constraints  for Gromov-Witten invariants, Hodge integrals and Frobenius manifolds. 
 This work is supported by National Science Foundation of China (Grant No. 11601279)  and  Shandong Provincial Natural Science Foundation (Grant No. ZR2021MA101). 
 






\end{document}